\definecolor{darkergreen}{rgb}{0.0, 0.5, 0.0}
\definecolor{darkblue}{RGB}{0,0,139}
\numberwithin{equation}{section}
\newcommand{\be}{\begin{eqnarray}}
\newcommand{\ee}{\end{eqnarray}}
\newcommand{\ce}{\begin{eqnarray*}}
\newcommand{\de}{\end{eqnarray*}}
\newtheorem{theorem}{Theorem}[section]
\newtheorem{lemma}[theorem]{Lemma}
\newtheorem{remark}[theorem]{Remark}
\newtheorem{definition}[theorem]{Definition}
\newtheorem{proposition}[theorem]{Proposition}
\newtheorem{assumption}[theorem]{Assumption}
\tikzset{
        dot/.style={circle,fill=black,inner sep=0pt, outer sep=0.7pt, minimum size=1mm},
        Phi/.style={white!40!red,thick,snake=coil,segment amplitude=0.6pt, segment length=2pt},
         Z/.style={black!40!green,thick,snake=coil,segment amplitude=0.6pt, segment length=2pt},
        C/.style={thick,black!20!blue},
          Cr/.style={thick,black!20!red},
            Cg/.style={thick,black!20!green},
       }
\begin{document}
\newcommand{\Q}{\mathbb{Q}}
\newcommand{\R}{\mathbb{R}}
\newcommand{\DD}{\mathbb{D}}
\newcommand{\cD}{\mathcal{D}}
\newcommand{\cG}{\mathcal{G}}
\newcommand{\TND}{\mathbb{T}^d_N}
\newcommand{\TD}{\mathbb{T}^d}
\newcommand{\I}{\mathbb{I}}
\newcommand{\Z}{\mathbb{Z}}
\newcommand{\N}{\mathbb{N}}
\newcommand{\F}{\mathcal{F}}
\newcommand{\p}{\mathbb{P}}
\newcommand{\hs}{\hspace{1cm}}
\newcommand{\XX}{\mathbb{X}}
\newcommand{\M}{\mathcal{M}}
\newcommand{\Pp}{\mathcal{P}}
\newcommand{\W}{\mathcal{W}}
\newcommand{\ED}[1]{{#1}^\epsilon_\delta}
\newcommand{\EDM}[1]{{#1}^\epsilon_{\delta,m}}
\newcommand{\EOM}[1]{{#1}^\epsilon_{0,m}}
\newcommand{\EDMg}[1]{{#1}^{\epsilon,g}_{\delta,m}}
\newcommand{\RR}{\mathcal{R}}
\newcommand{\E}{\mathbb{E}}
\newcommand{\h}{\mathcal{H}}
\newcommand{\cL}{\mathcal{L}}
\newcommand{\Ii}{\mathrm{I}}
\newcommand{\eps}{\epsilon}
\newcommand{\supp}{\mathrm{supp}}
\newcommand{\law}{\mathrm{Law}}
\newcommand{\leb}{\lambda}
\newcommand{\id}{\mathrm{id}}
\newcommand{\pr}{\mathrm{pr}}
\newcommand{\Ss}{\mathfrak{S}}
\newcommand{\inter}{\mathrm{int\,}}
\newcommand{\dhcomment}[1]{\textbf{\textcolor{red}{#1}}}
\newcommand{\PP}{\mathbb{P}}

\newcommand{\Ent}{\textrm{Ent}}

\newcommand{\norm}[1]{\| #1 \|}
\newcommand{\abs}[1]{|#1|}
\newcommand{\ud}[1]{\, \mathrm{d} #1}
\newcommand{\dx}{\ud{x}}
\newcommand{\dy}{\ud{y}}
\newcommand{\dxi}{\ud \xi}
\newcommand{\deta}{\ud{\eta}}
\newcommand{\dr}{\ud{r}}
\newcommand{\drp}{\ud{r'}}
\newcommand{\dxp}{\ud{x'}}
\newcommand{\dxip}{\ud{\xi'}}
\newcommand{\dyp}{\ud{y'}}
\newcommand{\ds}{\ud{s}}
\newcommand{\dt}{\ud{t}}
\newcommand{\dz}{\ud{z}}
\newcommand{\dd}{\ud}
\newcommand{\C}{\textrm{C}}
\newcommand{\ve}{\varepsilon}
\newcommand{\sgn}{\textrm{sgn}}
\newcommand{\mcS}{\mathcal{S}}
\newcommand{\Supp}{\textrm{Supp}}

\renewcommand{\d}{\delta}

\title[LDP for the ZRP in the whole space]{\Large Matching Large Deviation Bounds of the Zero-Range Process in the whole space}

\author{Benjamin Fehrman}
\author{Benjamin Gess}
\author{Daniel Heydecker}

\address{B. Fehrman:  Department of Mathematics, Louisiana State University, United States.}
\email{fehrman@math.lsu.edu}
\address{B. Gess: Institut f\"ur Mathematik, Technische Universit\"at Berlin, 10623 Berlin, Germany \\ \& Max Planck Institute for Mathematics in the Sciences, 04103 Leipzig, Germany.}
\email{Benjamin.Gess@mis.mpg.de}
\address{D. Heydecker: Department of Mathematics, Imperial College London, London, SW7 2AZ.}
\email{d.heydecker@imperial.ac.uk}

\subjclass[2010]{}

\keywords{}

\begin{abstract}
We consider the large deviations of the hydrodynamic rescaling of the zero-range process on $\mathbb{Z}^d$ in any dimension $d\ge 1$. Under mild and canonical hypotheses on the local jump rate, we obtain matching upper and lower bounds, thus resolving the problem opened by \cite{KL99}. On the probabilistic side, we extend the superexponential estimate to any dimension, and prove the superexponential concentration on paths with finite entropy dissipation. In addition, we extend the theory of the parabolic-hyperbolic skeleton equation to the whole space, and remove global convexity/concavity assumptions on the nonlinearity.

\end{abstract}

\maketitle

\setcounter{tocdepth}{1}
\tableofcontents

\section{Introduction \& Main Result} \label{sec: SOR} We consider the large deviations of the zero-range process (ZRP) on the infinite lattice $\mathbb{Z}^d$. Given a local jump rate\footnote{In the literature, the jump rate is often written $g$. We have deviated from this convention in order to avoid potential confusion with an $L^2_{t,x}$-control, which has also been denoted $g$ in previous papers \cite{FehGes19}.} $\lambda:\mathbb{N}\to [0,\infty)$, the zero-range process is informally the Markov process on configurations $X:=\mathbb{N}^{\mathbb{Z}^d}$ where, at a jump rate $2d\lambda(\eta(x))$, a particle jumps from the lattice site $x$ to a randomly chosen neighbour $y$. This is made precise by the generator \begin{equation}
	\label{eq: generator} LF(\eta):=\sum_{x,y\in \mathbb{Z}^d, x\sim y} \lambda(\eta(x))(F(\eta+1_y-1_x)-F(\eta))
\end{equation} whenever $F:X\to \mathbb{R}$ is any function depending on only finitely many sites, and $1_x, 1_y$ are the configurations with single particles at $x,y$.  \\ \\ The zero-range process is one of the classical models of statistical mechanics, and under mild assumptions it is well-known \cite{BKL95}, \cite{KL99} that the hydrodynamic rescaling of the empirical density field
 $$ \pi^N_t:=N^{-d}\sum_{x\in \mathbb{Z}^d} \eta_{N^2t}(x)\delta_{x/N} $$ 
converges to a solution to a nonlinear diffusion equation $$ \partial_t \rho=\Delta \varphi(\rho) $$ for a nonlinearity $\varphi(\rho)$ given in terms of the equilibrium measures, see \eqref{eq: def varphi} below. 

\medskip\noindent
The goal of the current work is to describe the behaviour beyond the law of large numbers, and to obtain matching upper and lower bounds for the large deviations of $\pi^N_t$. 
 While an in depth treatment has been obtained by Benois-Kipnis-Landim \cite{BKL95} and Kipnis-Landim \cite{KL99}, two important points remain unresolved,  of which one requires probabilistic, and the other one analytic argumentation. These two key difficulties are described in the next paragraphs, and the main goal of this work is to resolve them in order to reach the full large deviation principle displayed in Theorem \ref{thrm: main result} below.
 
 \medskip\noindent
 The first key difficulty appears in the proof of the superexponential estimate, see Theorem \ref{thrm: supex}, in both arbitrary dimension and infinite volume. In trying to follow the classical proof \cite{KOV89,BKL95,KL99}, the step which becomes problematic is the {\em two-block estimate}, where one must estimate in mean the difference of averages on large microscopic boxes for lattice points at a small macroscopic distance, under a measure admitting a density $f_N$ with respect to the invariant measure, in terms of a Dirichlet form $\mathfrak{d}(f_N)$. The proof in dimension $d=1$ \cite{KOV89,BKL95} can be seen to resemble the deduction of continuity from $H^1$-Sobolev regularity, and therefore cannot be directly modified to $d\ge 2$. To make the connection precise, we show in Proposition \ref{prop: no 2 block} that the respective estimate used in \cite{BKL95} for $d=1$ is false in $d\ge 2$ as a result of the failure of the Sobolev embedding $H^1(\R^d)\not \hookrightarrow C(\R^d)$. In the course of the proof we unveil a connection between the two-block estimate and the macroscopic entropy dissipation functional \eqref{eq: entropy diss def} which arises in the analysis of the limiting fluctuations, see equations \eqref{eq: connect dirichlet form to fischer information 1}, \eqref{eq: connect dirichlet form to fischer information 2}. As a result, the {\em only} quantities which can be controlled by Dirichlet form estimates are the probabilistic analogues of those controlled by the PDE analysis in \cite{FehGes19}: This principle appears to be quite general, and also applies to lattice systems not of zero-range type.

\medskip\noindent
  Existing works in the literature which treat general dimension \cite{KL99, Qu.Re.Va1999} avoid this issue in the torus $\TD, d\ge 1$ through an averaging procedure, which tacitly benefits from the geometry by averaging over a {\em compact, transitively acting symmetry group}; in the setting of the  full space, compactness is lost, and the proof cannot be repeated. \\  In order to resolve this issue, we introduce a further approximation, which is agnostic to both dimension and the global domain geometry. In Lemma \ref{lemma: typical f}, we show that the variational problem appearing in the whole superexponential estimate gains {\em regularity in the probability space from the regularity of test functions} in an $N$-uniform way. More precisely, thanks to testing against a regular test function $H\in C^1_c([0,T]\times \R^d)$, a supremum over all probability density functions $f_N$ may be restricted to those $f_N$ enjoying a uniform bound on the contributions of any one edge $\mathfrak{d}_{x,y}(f_N)$ to the Dirichlet form, up to an error which vanishes, uniformly in $N$, as the cutoff is removed. Extending the analogy to the Sobolev embedding, imposing such a cutoff corresponds to working with $W^{1,\infty}$-type regularity rather than $H^1$, and this additional restriction provides the missing regularity necessary to close the two-block estimate in any dimension and in the full space.
 
 \medskip\noindent
 The second key observation is that the existing bounds found in the literature \cite{BKL95,KL99} do not determine a {\em full} large deviation principle, in that the lower bound is restricted to a set of sufficiently regular fluctuations, or equivalently that different rate functions appear in the upper and lower bounds: We refer to the definitions preceding Theorem \ref{thrm: partial LDP}. This problem is discussed in \cite{FehGes19,heydecker2023large, gess2025porousmediumequationlarge}, and examples have been found in other models by the third author \cite{heydecker2023large} where such a candidate rate function does not capture the full large deviation principle. In the context of the zero-range process, in finite volume, and under the assumption that $\varphi^{1/2}$ is convex or concave, this problem has recently been solved by the first two named authors \cite{FehGes19}, by proving the well-posedness of weak solutions to the \emph{skeleton equation} \begin{equation}
	\label{eq: sk} \partial_t \rho=\Delta \varphi(\rho)-\nabla\cdot(\varphi^{1/2}(\rho)g), \qquad g\in (L^2_{t,x})^d. 
\end{equation}  
These results have subsequently been applied to prove a full large deviations principle for a zero-range process with rescaled particle-size about the porous medium equation by two of the authors \cite{gess2025porousmediumequationlarge}. However, when applied to the zero-range process in the classical hydrodynamical scaling, the validation that the macroscopic diffusivity $\varphi^{1/2}$ is convex or concave leads to restrictive assumptions on the local junp rate $\lambda$. The second goal is thus to extend the theory of \eqref{eq: sk} to cover infinite volume, and to ensure that the only hypotheses needed on $\varphi^{1/2}$ can be derived from the mild, canonical hypotheses \eqref{eq: Lipschitz}, \eqref{eq: spectral gap} below on the microscopic jump rate $\lambda$. \\
This analysis culminates in Theorem \ref{thm:envelope}, which resolves the difficulty of matching bounds. Following \cite{FehGes19}, the uniqueness required to extend the lower bound must be proven at the level of renormalised solutions, which informally satisfy the equation \eqref{eq: sk} after cutting off large values, see Definition \ref{skel_sol_def_rel}. Meanwhile, the upper bound remains expressed in the facially weaker theory of weak solutions of \eqref{eq: sk}, and the goal of obtaining matching bounds therefore demands to show that these two solution theories coincide. It is here that the properties of the nonlinearity $\varphi$ play a pivotal role: In passing from a weak solution $\rho$ to the renormalised framework, commutator errors appear by estimating $\varphi(\rho)^\epsilon \approx \varphi(\rho^\epsilon)$, where $\cdot^\epsilon$ denotes mollification on scale $\epsilon$. The required estimation of these commutators developed in \cite{FehGes19} relies on the strong hypothesis that $\varphi^{1/2}$ is either convex or concave, a condition that does not seem to be implied by any natural assumptions on the jump rate.\\
It is for this reason that we introduce a novel `defective concavity' property in Lemma~\ref{eq: defective concavity}, which may be deduced directly from the canonical hypotheses \eqref{eq: Lipschitz}, \eqref{eq: spectral gap} on $\lambda$.  The notion of `defective concavity' allows to substantially extend the applicability of \cite[Theorem~14]{FehGes19} to include all nonlinearities $\varphi$ that arise from jump rates satisfying \eqref{eq: Lipschitz}, \eqref{eq: spectral gap}.\\
Further, several new techniques are needed in order to extend the results of \cite{FehGes19} to infinite volume.  The solution theory is itself based on the kinetic form of the equation, which is a $L^1$-based theory.  However, in the whole space, the fluctuations obtained as limits of the zero-range process are only locally, and not globally, $L^1$-integrable, as a result of the infinite volume. For this reason, the proof of uniqueness for \eqref{eq: sk} in Theorem~\ref{thm_rel_unique} below requires to handle new cutoff errors at infinity through a careful use of interpolation and Sobolev inequalities.  In addition, the characterisation of the large deviations rate function in Theorem~\ref{thm:envelope} below requires an additional spatial approximation to remain in the space of smooth fluctuations given in Definition~\ref{smooth_fluctuation} below.  All of this adds to the intrinsic difficulties of treating equation \eqref{eq: sk}, for which a uniqueness theory based on $L^p$-analysis, $p>1$, is impossible due to the energy criticality (see, for example, \cite[Section~2.1]{FehGes19}), and must instead rely on the relative entropy estimate of Proposition~\ref{rel_prop0} below.  More details on these analytic issues can be found in Sections~\ref{sec_unique}, \ref{section_equivalence}, and \ref{sec_lsc_envelope} below. 

\medskip\noindent
We next give some definitions in order to give a precise statement of the main result in Theorem \ref{thrm: main result} below.    \\ \paragraph{\emph{Hypotheses on the local jump rate $\lambda$}} We first give the conditions on the local jump rate $\lambda$. We assume that \begin{enumerate}[label=\textbf{(A\arabic*)}] 
	\item $\lambda(k)>\lambda(0)=0$ for all $k>0$, and for some finite $c<\infty$, \begin{equation}\label{eq: Lipschitz} \lambda(k')-\lambda(k)\le c(k'-k) \hspace{1cm} \text{ for all } k, k' \in \mathbb{N}\text{  with  } k'\ge k.\end{equation} 
	\item (Spectral Gap Inequality) The function $\lambda$ is nondecreasing, and there exists $k\in \mathbb{N}$ and $\delta>0$ such that, for all $n\in \mathbb{N}$, \begin{equation}\label{eq: spectral gap} \lambda(n+k)\ge \lambda(n)+\delta.\end{equation} 
\end{enumerate} We study the large deviations on a finite time interval $[0,T]$, considering a Skorokhod space $\mathbb{D}$ and its topology defined in Section \ref{sec: prelim}. Our (candidate) rate function is given as follows. \begin{definition}[Rate Function] \label{def: rate function} For every $\pi \in \mathbb{D}$, for $Z$ as in \eqref{eq: partition} below, we set the relative entropy to be \begin{equation} \label{eq: relative entropy} \mathcal{H}_\Phi(\pi_0|\gamma):=\int_{\mathbb{R}^d}\Big\{\rho_0(x)\log \Big(\frac{\varphi(\rho_0(x))}{\varphi(\gamma)}\Big)-\log \Big(\frac{Z(\varphi(\rho_0(x)))}{Z(\varphi(\gamma))}\Big)\Big\}\end{equation} if $\pi_0$ admits a density $\rho_0$, and $\infty$ else. We now set \begin{equation}\label{lsc_01111'}
\mathcal{I}(\pi)=\mathcal{H}_\Phi(\pi_0|\gamma)+\frac{1}{2}\inf\Big\{\norm{g}^2_{L^2}\colon \partial_t\rho = \Delta\varphi(\rho)-\nabla\cdot(\varphi^\frac{1}{2}(\rho)g)\Big\}
\end{equation} if $\pi_t$ admits a density $\rho_t$ for all times, satisfying $\|\nabla \varphi^{1/2}(\rho)\|_{L^2_{t,x}}^2<\infty$, and where the infimum is taken over all controls $g\in (L^2_{t,x})^d$ such that $\rho$ solves the \emph{skeleton equation} in the sense of Definition \ref{def_sol_re}. If no such density exists, then $\mathcal{I}(\pi):=\infty$.
\end{definition} This rate function is different from the one appearing in \cite{BKL95} as a result of imposing the entropy dissipation condition $\|\nabla \varphi^{1/2}(\rho)\|_{L^2_{t,x}}^2<\infty$; we refer to the overview below Theorem \ref{thrm: partial LDP} for a precise discussion of the difference.  We may now give the main result. \begin{theorem} \label{thrm: main result}
	Let $\lambda$ be a local jump rate satisfying (A1-2), and let $\pi^N=(\pi^N_t)_{0\le t\le T}$ be the hydrodynamic rescaling of a zero-range process $(\eta_t, t\ge 0)$ defined on a probability space $(\Omega, \mathcal{F}, \mathbb{P})$, under which $\eta_0$ is sampled from an invariant measure $\nu_\gamma$, defined in \eqref{eq: nurho}, for some $\gamma>0$. Then for any open subset $\mathcal{U}\subset \DD$, \begin{equation}
		\liminf_N N^{-d}\log \mathbb{P}\left(\pi^N \in \mathcal{U}\right)\ge -\inf\left\{\mathcal{I}(\pi):\pi \in \mathcal{U}\right\}
	\end{equation} and for any closed set $\mathcal{A}\subset \DD$, \begin{equation}
		\limsup_N N^{-d}\log \mathbb{P}\left(\pi^N \in \mathcal{A}\right)\le -\inf\left\{\mathcal{I}(\pi): \pi \in \mathcal{A}\right\}.
	\end{equation}
\end{theorem}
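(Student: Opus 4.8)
The plan is to combine two probabilistic inputs — exponential tightness of $(\pi^N)$ in $\DD$ and the superexponential estimate — with the analytic theory of the skeleton equation \eqref{eq: sk}, bridging the two through the identification of the weak and renormalised solution theories. First I would establish exponential tightness from moment bounds on $\nu_\gamma$ together with a Garsia--Rodemich--Rumsey type estimate, reducing the upper bound to compact sets. For the upper bound on a compact $K$, I would use the exponential martingales attached to test functions $H\in C^{1,2}_c([0,T]\times\R^d)$ and the superexponential estimate of Theorem \ref{thrm: supex} — now valid in every dimension and in infinite volume, since Lemma \ref{lemma: typical f} supplies the edgewise Dirichlet-form cutoff needed to close the two-block estimate that Proposition \ref{prop: no 2 block} shows is otherwise false for $d\ge2$ — to replace the microscopic rates $\lambda(\eta(x))$ by $\varphi$ of a block-averaged density, up to superexponentially small error. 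A Varadhan-type argument then yields $\limsup_N N^{-d}\log\mathbb{P}(\pi^N\in K)\le -\inf_K\bar{\mathcal I}$, where $\bar{\mathcal I}(\pi)=\mathcal H_\Phi(\pi_0|\gamma)+\tfrac12\inf\{\norm{g}_{L^2}^2\}$ with the infimum over controls for which $\rho$ solves \eqref{eq: sk} merely in the weak sense of Definition \ref{def_sol_re}, and $\bar{\mathcal I}=\infty$ unless $\pi$ has a density obeying $\norm{\nabla\varphi^{1/2}(\rho)}_{L^2_{t,x}}<\infty$ — this last restriction being the superexponential concentration on finite-entropy-dissipation paths, and the reason $\mathcal I$ differs from the rate function of \cite{BKL95}. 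Since renormalised solutions are weak solutions, $\bar{\mathcal I}\le\mathcal I$, so to upgrade the upper bound to the claimed rate function I must prove $\bar{\mathcal I}=\mathcal I$.

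That identification is the analytic core, and I would carry it out via Theorems \ref{thm_rel_unique} and \ref{thm:envelope}. Given a weak solution $\rho$ of \eqref{eq: sk} with control $g$ and finite entropy dissipation, mollify on scale $\epsilon$ and show $\rho$ is in fact a renormalised solution of \eqref{eq: sk} in the sense of Definition \ref{skel_sol_def_rel}; the obstruction is the commutator $\varphi(\rho)^\epsilon-\varphi(\rho^\epsilon)$, controlled in \cite{FehGes19} using global convexity or concavity of $\varphi^{1/2}$. I would replace that hypothesis — which no natural condition on $\lambda$ guarantees — by the defective concavity of Lemma \ref{eq: defective concavity}, which does follow from \eqref{eq: Lipschitz} and \eqref{eq: spectral gap} and suffices both to bound the commutator and to run the relative-entropy uniqueness estimate of Proposition \ref{rel_prop0} at the renormalised level. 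The whole-space setting forces the extra step of absorbing cutoff errors at spatial infinity — the fluctuations being only locally $L^1$ — which I would handle by interpolation and Sobolev inequalities. The upshot is that weak and renormalised solutions of \eqref{eq: sk} coincide, hence $\bar{\mathcal I}=\mathcal I$ and the upper bound holds with $\mathcal I$.

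For the lower bound I would first treat a target $\pi$ in the class of smooth fluctuations of Definition \ref{smooth_fluctuation}, with density bounded away from $0$ and $\infty$ on compacts and smooth, compactly supported control $g$: tilting the law of the zero-range process by the associated exponential martingale gives a perturbed process whose empirical density satisfies a law of large numbers towards $\rho$ (again using the superexponential estimate, now for the perturbed dynamics) and whose relative entropy with respect to $\mathbb P$ is $N^d(\mathcal I(\pi)+o(1))$, yielding $\liminf_N N^{-d}\log\mathbb P(\pi^N\in\mathcal U)\ge -\mathcal I(\pi)$ for such $\pi$ — this is the partial lower bound of Theorem \ref{thrm: partial LDP}. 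To reach arbitrary open $\mathcal U$ I would show every $\pi$ with $\mathcal I(\pi)<\infty$ admits a recovery sequence $\pi^n\to\pi$ in $\DD$ of smooth fluctuations with $\mathcal I(\pi^n)\to\mathcal I(\pi)$, built by mollifying $(\rho,g)$ in space and time, truncating and rescaling the total mass, and passing to the limit in the entropy and dissipation terms using stability of \eqref{eq: sk} and the structure of $\varphi$; together with lower semicontinuity of $\mathcal I$ this is the envelope statement of Theorem \ref{thm:envelope}. Combining the two bounds with exponential tightness gives the full large deviation principle.

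The hard part will be twofold. On the probabilistic side, the two-block estimate genuinely fails for $d\ge2$ and in infinite volume (Proposition \ref{prop: no 2 block}), so it must be rerouted through the dimension- and geometry-agnostic \emph{a priori} edgewise Dirichlet bound of Lemma \ref{lemma: typical f}. On the analytic side — and this is where I expect the most delicate work — one must make the renormalisation and relative-entropy scheme for \eqref{eq: sk} run in the whole space using only the defective concavity inherited from \eqref{eq: Lipschitz} and \eqref{eq: spectral gap} and only local $L^1$ control of the fluctuations, which is precisely what forces the careful interpolation arguments behind Theorems \ref{thm_rel_unique} and \ref{thm:envelope}.
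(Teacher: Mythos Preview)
Your proposal is correct and follows essentially the same route as the paper: Theorem~\ref{thrm: partial LDP} supplies the non-matching bounds with rate functions $\mathcal{I}^{\rm up}$ and $\mathcal{I}^{\rm lo}$, and Theorem~\ref{thm:envelope} closes the gap by showing $\mathcal{I}^{\rm up}=\mathcal{I}^{\rm lo}=\mathcal{I}$. One small point of attribution: since $\mathcal{I}$ is already defined via \emph{weak} solutions (Definition~\ref{def_sol_re}), the equality $\mathcal{I}^{\rm up}=\mathcal{I}$ is a Riesz-representation computation \eqref{lsc_11}, and the weak-equals-renormalised equivalence (Theorem~\ref{thm_equiv}, via the defective concavity Lemma~\ref{lemma: defective concavity}) is instead what powers the recovery-sequence argument for $\mathcal{I}^{\rm lo}=\mathcal{I}$ through the uniqueness Theorem~\ref{thm_rel_unique}.
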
 This paper is divided into two parts, corresponding to the twofold aims discussed above, as follows. Sections \ref{sec: prelim} - \ref{sec: entropy dissipation} contain the probabilistic content, generalising \cite{BKL95,KL99} to give a large deviation principle with non-matching upper and lower bounds in Theorem \ref{thrm: partial LDP}. Since the general method is well-established, we focus only on the technical aspects: the superexponential estimate Theorem \ref{thrm: supex} is proven in Section \ref{sec: supex}, and an entropy dissipation estimate Lemma \ref{lemma: finite entropy dissipation} is proven in Section \ref{sec: entropy dissipation}. The second part, in Sections \ref{sec_rel_skel_exist} - \ref{sec_lsc_envelope}, develops a theory of \eqref{eq: sk} in infinite volume, for a general nonlinearity $\Phi$ replacing $\varphi$. This leads up to Theorem \ref{thm:envelope}, which, under hypotheses covering $\varphi$ under only (A1-A2), shows that $\mathcal{I}$ coincides with its lower semicontinuous envelope on a restricted set. Together, Theorems \ref{thrm: partial LDP} and \ref{thm:envelope} thus establish the main result Theorem \ref{thrm: main result}. \section{Preliminaries} \label{sec: prelim} \subsection{Preliminaries on the Zero-Range Process}
We first give some preliminary facts about the zero-range process. Assuming only (A1) and that $\lambda(n)\to \infty$, the partition function given by  \begin{equation} \label{eq: partition} Z(\varphi):=\sum_{k=0}^\infty \frac{\varphi^k}{\prod_{1\le l\le k} \lambda(l)} \end{equation} is finite for all $0\le \varphi<\infty$. Under the same assumption, for every density $0\le 
\rho<\infty$, there exists a unique invariant measure ${\nu}_\rho$ on $X$, which is the product measure under which each value $\eta(x), x\in \Z^d$ has the distribution \begin{equation}
	\label{eq: nurho} {\nu}_\rho(\eta\in X: \eta(x)=k)=\frac{\varphi^k(\rho)}{Z(\varphi(\rho))\prod_{1\le l\le k}\lambda(l)}
\end{equation} where $\varphi(\rho)$ is (uniquely) chosen so that $\mathbb{E}_{\nu_\rho}[\eta(0)]=\rho$. The nonlinearity $\varphi$ may be recovered by \begin{equation} \label{eq: def varphi} \varphi(\rho)=\mathbb{E}_{\nu_\rho}[\lambda(\eta(0))]\end{equation} and it is a well-known piece of folklore that, under \eqref{eq: spectral gap}, the nonlinearity $\varphi$ is uniformly elliptic: for some $0<a\le A<\infty$, for all $\xi\in [0,\infty)$, \begin{equation}
	\label{eq: DH hyp on Phi'} a\le \varphi'(\xi)\le A.\end{equation} These measures satisfy the change-of-variables formula \begin{equation} \label{eq: COV} \varphi(\rho)\int_{X} F(\eta+1_x)\nu_\rho(d\eta)=\int_{X}\lambda(\eta(x))F(\eta)\nu_\rho(d\eta).\end{equation} We have the further consequence of the finiteness of the partition function, which will play a role similar to \cite[(A4)]{benois1995large}. \begin{lemma}\label{lemma: exp int}  There exists a convex function $w:[0,\infty)\to [0,\infty)$ such that $w(x)/x\to \infty$ as $x\to \infty$, and such that, for any $\rho\in[0,\infty)$, there exists $\theta>0$ such that \begin{equation}
	\label{eq: a4} \mathbb{E}_{\nu_\rho}\Big[e^{\theta w(\eta(0))}\Big]<\infty.
\end{equation}\end{lemma}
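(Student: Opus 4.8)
The plan is to reduce the statement to an elementary fact about the coefficients of the partition function. Write $a_0:=1$ and $a_k:=\bigl(\prod_{1\le l\le k}\lambda(l)\bigr)^{-1}$ for $k\ge 1$, so that the law of $\eta(0)$ under $\nu_\rho$ is $\nu_\rho(\eta(0)=k)=a_k\varphi(\rho)^k/Z(\varphi(\rho))$ and $Z(\varphi)=\sum_{k\ge 0}a_k\varphi^k$. Since $Z(\varphi)<\infty$ for every $\varphi\in[0,\infty)$, the general terms of this convergent series tend to $0$, i.e. $a_k\varphi^k\to 0$ for each fixed $\varphi$; equivalently, setting $\psi(k):=-\log a_k=\sum_{1\le l\le k}\log\lambda(l)$, one has $\psi(k)/k\to\infty$. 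In particular, for each $n\in\mathbb{N}$ there is a threshold $k_n\in\mathbb{N}$ such that $\psi(k)\ge nk$ for all $k\ge k_n$, and the $k_n$ may be chosen strictly increasing with $k_1\ge 1$.

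Given these thresholds I would \emph{define} $w:[0,\infty)\to[0,\infty)$ by $w(x):=\sum_{n\ge 2}(x-k_n)_+$; for fixed $x$ this is a finite sum (as $k_n\to\infty$), and $w$ is a nonnegative, continuous, piecewise-linear convex function with $w(0)=0$ whose slope on $[k_m,k_{m+1}]$ equals $m-1$, so that the slope tends to $\infty$; since $w(x)\ge w(x)-w(x/2)\ge \tfrac{x}{2}\,w'(x/2)$ for the right-derivative $w'$, it follows that $w(x)/x\to\infty$. The role of the thresholds is the pointwise bound $w(k)\le\psi(k)$ for every integer $k\ge k_1$: if $k\in[k_m,k_{m+1})$ with $m\ge 2$ then $w(k)=\sum_{n=2}^{m}(k-k_n)<(m-1)k<mk\le\psi(k)$, while if $k\in[k_1,k_2)$ then $w(k)=0\le k\le\psi(k)$.

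It then remains to verify the exponential integrability, and I expect $\theta=\tfrac12$ to work for \emph{every} $\rho$. Writing $\varphi=\varphi(\rho)$, one has $\mathbb{E}_{\nu_\rho}[e^{w(\eta(0))/2}]=Z(\varphi)^{-1}\sum_{k\ge 0}e^{w(k)/2}a_k\varphi^k$. The finitely many terms with $k<k_1$ are harmless; for $k\ge k_1$ the bound $w(k)\le\psi(k)$ gives $e^{w(k)/2}a_k\varphi^k\le e^{\psi(k)/2}a_k\varphi^k=a_k^{1/2}\varphi^k$, and from $a_k(2\varphi)^{2k}\le Z(4\varphi^2)<\infty$ we obtain $a_k^{1/2}\varphi^k\le Z(4\varphi^2)^{1/2}\,2^{-k}$, whence $\sum_{k\ge k_1}e^{w(k)/2}a_k\varphi^k\le 2\,Z(4\varphi^2)^{1/2}<\infty$. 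This proves $\mathbb{E}_{\nu_\rho}[e^{\theta w(\eta(0))}]<\infty$ with $\theta=\tfrac12$, as required.

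The only real obstacle is that $\psi=-\log a_k$ is defined only on the integers and is in general neither convex nor even everywhere nonnegative (for small $k$ one may have $\lambda(l)<1$), so one cannot simply take $w=\psi$; the adaptive, $\lambda$-dependent piecewise-linear minorant constructed above sidesteps this while retaining superlinearity, and everything else is a routine consequence of $Z(\cdot)<\infty$. It is worth noting that the dependence of $w$ on $\lambda$ is genuinely necessary: $\psi(k)/k$ may diverge arbitrarily slowly, so no fixed convex superlinear $w$ (independent of $\lambda$) can satisfy \eqref{eq: a4} for all admissible jump rates.
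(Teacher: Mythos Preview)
Your argument is correct. The paper states this lemma as a ``further consequence of the finiteness of the partition function'' but does not supply a proof, so there is nothing to compare against; your construction of the piecewise-linear convex minorant $w(x)=\sum_{n\ge 2}(x-k_n)_+$ of $\psi(k)=-\log a_k$ is exactly the kind of elementary argument the authors presumably have in mind, and your verification that $\theta=\tfrac12$ works uniformly in $\rho$ (via the bound $a_k^{1/2}\varphi^k\le Z(4\varphi^2)^{1/2}2^{-k}$) is clean and in fact slightly stronger than what is asserted, since the lemma only requires some $\rho$-dependent $\theta>0$.
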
 In the rest of the paper, we will reserve the notation $\varphi$ for the function thus defined, which characterises the hydrodynamic limit and large deviations. We distinguish this from the notation $\Phi$ in \ref{sec_rel_skel_exist} - \ref{sec_lsc_envelope}, which we use to denote a more general nonlinearity. We similarly write $\cD(v)$ for the entropy dissipation functional for the particular nonlinearity $\varphi$, given by \begin{equation}\label{eq: entropy diss def} \cD(v):=\|\nabla \varphi^{1/2}(v)\|_{L^2_x}^2; \qquad v\in L^1_{\rm loc}(\mathbb{R}^d) \end{equation} which we allow to take the value $\infty$ if $\varphi^{1/2}(v)\not \in \dot{H}^1(\R^d)$. \subsection{Topological Definitions} We now introduce some topological spaces, which are deferred from the statement of Theorem \ref{thrm: main result}. We write $\mathcal{M}$ for the space of locally finite, nonnegative Radom measures on $\mathbb{R}^d$, equipped with a metric $d$ inducing the vague topology of convergence against test functions $f\in C_c(\mathbb{R}^d)$. We will take the path space \begin{equation} \label{eq: def skorokhod} \DD:=\DD([0,T],(\mathcal{M}, d))\end{equation} to be the Skorokhod space on a fixed time interval $[0,T]$. We will also write $\mathcal{C}:=\textrm{C}([0,T];(\mathcal{M}(\R^d),d))$ for those trajectories which are further continuous under the metric $d$.\\ \paragraph{\emph{Densities}} In the statement of Definition \ref{def: rate function}, we carefully distinguished between a trajectory $\pi \in \DD$ and its density $v$, should one exist. In order to ease statements in the sequel, we will ignore this difference. We therefore identify $L^1_{{\rm loc},+}:=L^1_{\rm loc}(\mathbb{R}^d, [0,\infty))$ with the absolutely continuous measures in $\mathcal{M}$ by identifying every $v$ with the measure $v(x)dx$ of which it is the density. We write $\DD_{\rm a.c.}$ to denote the Skorokhod space $\DD([0,T], (L^1_{{\rm loc},+},d))$, which is taken by the identification to those paths $\pi \in \DD$ which are absolutely continuous with respect to Lebesgue measure for all $0\le t\le T$. In this way, we may write $I(\rho)$ for functions $\rho_t(x)$. We use the symbol $\rho$ for a generic element of $\DD_{\rm a.c.}$, and $\pi$ for a generic element of $\DD$. When not specified otherwise, $L^1_{{\rm loc},+}$ will be understood to have the subspace topology induced by $d$; we will also speak of the strong Fr\'echet topology for the locally convex topology induced by the seminorms $v\mapsto \int_{|x|\le R} |v(x)|dx, R\in (0,\infty)$.\section{Superexponential Estimate in Infinite Volume} \label{sec: supex}
The goal of the following two sections is to give the probabilistic arguments which lead to large deviation upper and lower bounds, with (a priori) non-matching rate functions. For the upper bound, set, for $H\in C^{1,3}_{\rm c}([0,T]\times\mathbb{R}^d)$ and $\rho \in \DD_{\rm a.c.}$, \begin{equation}\begin{split}
 J(H, \rho)&:=\langle H_T,\rho_T\rangle - \langle H_0, \rho_0\rangle - \int_0^T\langle \partial_t H_t, \rho_t\rangle dt \\& \hspace{1cm}-\frac12 \int_0^T \int_{\mathbb{R}^d}\varphi(\rho_t(x))\left(\Delta H_t(x)+|\nabla H_t(x)|^2\right) dt dx   \end{split}\end{equation} if $\rho$ has finite entropy dissipation $\int_0^T \cD(\rho_t)dt<\infty$, and $\infty$ if the last integral is infinite, or if $\pi \in \DD\setminus \DD_{\rm a.c.}$. The initial upper bound will be given in terms of \begin{equation}
 	\label{eq: Ior} \mathcal{I}^{\rm up}(\pi):=\mathcal{H}_\Phi(\pi_0|\gamma)+\sup_{H\in C^{1,3}_{\rm c}([0,T]\times \mathbb{R}^d)} J(H, \pi).
 \end{equation}  For the lower bound, set $\mathcal{S}$ to be the set of $\rho\in \DD_{\rm a.c.}$ such that $\rho_0-\gamma \in C_c(\mathbb{R}^d)$, such that $\rho_0$ is strictly positive, and such that, for some compactly supported $H\in C^{1,3}_c([0,T]\times\mathbb{R}^d)$, $\rho_t$ solves the Fokker-Planck equation \begin{equation}
 	\partial_t \rho_t = \frac12\Delta \varphi(\rho_t)-\nabla\cdot(\varphi(\rho_t)\nabla H).
 \end{equation} Define the lower semicontinuous envelope \begin{equation}\label{eq: Ilo}
 	\mathcal{I}^{\rm lo}(\pi):=\inf \Big\{\liminf_n\Big\{\mathcal{H}(\pi^n_0|\gamma)+\frac12\int_0^T \int_{\mathbb{R}^d}\varphi(\rho^n)|\nabla H^n|^2 \Big\}: \rho^n \in \mathcal{S}, \rho^n \to \pi\Big\}
 \end{equation} where $H^n$ are the controls associated to the smooth approximating trajectories $\rho^n \in \mathcal{S}$. With these defined, the theorem is as follows. \begin{theorem}\label{thrm: partial LDP} Let $(\Omega, \mathcal{F}, \mathbb{P})$ be a probability space on which is defined a zero-range process $(\eta_t, t\ge 0)$ with a jump rate $\lambda$ satisfying (A1) and $\lambda(n)\to \infty$, and suppose that $\eta_0 \sim \nu_\gamma$, for some $\gamma\in (0,\infty)$. Let $\pi^N=(\pi^N_t)_{0\le t\le T}$ be the hydrodynamic rescaling. Then, for any closed set $\mathcal{A}\subset \DD$, \begin{equation}
		\limsup_N N^{-d}\log \mathbb{P}\left(\pi^N \in \mathcal{A}\right)\le -\inf\left\{\mathcal{I}^{\rm up}(\pi): \pi \in \mathcal{A}\right\}
	\end{equation} and for any open set $\mathcal{U}\subset \DD$, \begin{equation}
		\liminf_N N^{-d}\log \mathbb{P}\left(\pi^N \in \mathcal{U}\right)\ge -\inf\left\{\mathcal{I}^{\rm lo}(\pi):\pi \in \mathcal{U}\right\}.
	\end{equation} 
\end{theorem}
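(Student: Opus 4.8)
The plan is to follow the well-established Donsker--Varadhan / Kipnis--Landim machinery for large deviations of interacting particle systems, adapted to the infinite-volume setting, and to feed into it the superexponential estimate (Theorem \ref{thrm: supex}) and the entropy dissipation estimate (Lemma \ref{lemma: finite entropy dissipation}) that are established later in the probabilistic part. For the \emph{upper bound}, the first step is to introduce, for each $H\in C^{1,3}_c([0,T]\times\R^d)$, the exponential martingale
\[
M^{N,H}_t = \exp\!\Big(N^d\Big[\langle H_t,\pi^N_t\rangle - \langle H_0,\pi^N_0\rangle - \int_0^t e^{-N^d\langle H_s,\pi^N_s\rangle}(\partial_s + N^2 L)e^{N^d\langle H_s,\pi^N_s\rangle}\,ds\Big]\Big),
\]
which has expectation $1$. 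Expanding the generator term and using the explicit form of $L$ in \eqref{eq: generator}, one finds that the compensator is, up to an error that is negligible on the exponential scale, equal to $N^d$ times the Fokker--Planck functional $\langle H_T,\pi^N_T\rangle - \langle H_0,\pi^N_0\rangle - \int_0^T\langle \partial_t H,\pi^N_t\rangle - \tfrac12\int_0^T\int \tau_{x/N}(\text{local average of }\lambda)\,(\Delta H + |\nabla H|^2)$, where the rate $\lambda(\eta(x))$ has been replaced by a block average. Here the \emph{superexponential estimate} Theorem \ref{thrm: supex} is invoked to replace this block average of $\lambda$ by $\varphi$ evaluated at the macroscopic density, with a cost that is $o(N^d)$ on the logarithmic scale and uniform over $H$ ranging in a compact family; this is exactly the point where the novel one-edge Dirichlet-form cutoff (Lemma \ref{lemma: typical f}) is needed to close the two-block estimate in $d\ge 2$ and in infinite volume. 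After this replacement, a Chebyshev/Markov argument applied to $M^{N,H}_T$ gives, for each fixed $H$,
\[
\limsup_N N^{-d}\log\PP(\pi^N\in\mathcal{A}) \le -\inf_{\pi\in\mathcal{A}}\big(\mathcal{H}_\Phi(\pi_0|\gamma) + J(H,\pi)\big),
\]
where the initial-data term $\mathcal{H}_\Phi(\pi_0|\gamma)$ comes from the standard relative-entropy estimate for the product measure $\nu_\gamma$ (using Lemma \ref{lemma: exp int} to control the initial field, in place of [benois1995large, (A4)]). One then needs exponential tightness of $(\pi^N)$ in $\DD$ — here again Lemma \ref{lemma: exp int} and the local structure of the vague topology give the required compact sets — so that it suffices to prove the bound on compacts, and finally a minimax lemma (Varadhan's lemma, in the form e.g. of [KL99, Lemma A.2]) to exchange the supremum over $H$ with the infimum over $\pi$, yielding $\mathcal{I}^{\rm up}$. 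A separate but routine point: on the event that $\int_0^T\cD(\pi^N_t)\,dt$ is large, the entropy dissipation estimate Lemma \ref{lemma: finite entropy dissipation} shows the probability is superexponentially small, so the definition of $J$ as $+\infty$ off the finite-dissipation set is consistent with the bound.

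For the \emph{lower bound}, the plan is the classical change-of-measure (tilting) argument. Given a smooth profile $\rho\in\mathcal{S}$ with associated control $H\in C^{1,3}_c$ solving the Fokker--Planck equation $\partial_t\rho = \tfrac12\Delta\varphi(\rho) - \nabla\cdot(\varphi(\rho)\nabla H)$, one defines a new (time-inhomogeneous) Markov dynamics $\PP^{N,H}$ by tilting the jump rates with the exponential weight built from $H$, and starts it from a product measure whose local densities track $\rho_0$. Standard arguments (the ``local equilibrium'' / hydrodynamic limit for the perturbed process, which follows from the same superexponential estimate applied under $\PP^{N,H}$, together with uniqueness for the limiting Fokker--Planck equation among sufficiently regular solutions) show that under $\PP^{N,H}$ one has $\pi^N\to\rho$ in $\DD$ in probability. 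Then, for any open $\mathcal{U}$ containing $\rho$, the entropy inequality / Radon--Nikodym computation
\[
N^{-d}\log\PP(\pi^N\in\mathcal{U}) \ge N^{-d}\Big(-\PP^{N,H}(\pi^N\in\mathcal{U})^{-1}\E^{N,H}\big[\log\tfrac{d\PP^{N,H}}{d\PP}\,;\,\pi^N\in\mathcal{U}\big]\Big) - o(1),
\]
combined with the explicit computation of the relative entropy rate $N^{-d}\E^{N,H}[\log\frac{d\PP^{N,H}}{d\PP}] \to \mathcal{H}(\rho_0|\gamma) + \tfrac12\int_0^T\int\varphi(\rho)|\nabla H|^2$, gives $\liminf_N N^{-d}\log\PP(\pi^N\in\mathcal{U}) \ge -\big(\mathcal{H}(\rho_0|\gamma) + \tfrac12\int_0^T\int\varphi(\rho)|\nabla H|^2\big)$ for every $\rho\in\mathcal{S}\cap\mathcal{U}$. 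Optimising over such $\rho$ and then taking the lower semicontinuous envelope — i.e. also allowing $\rho^n\in\mathcal{S}$ with $\rho^n\to\pi$, using that the left-hand side only depends on the open set $\mathcal{U}$ — produces exactly $\mathcal{I}^{\rm lo}(\pi)$.

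I expect the main obstacle to be the upper bound, and specifically the replacement step inside the compensator: proving that the block-averaged rate $\lambda(\eta(x))$ can be replaced by $\varphi$ of the empirical density \emph{uniformly over the relevant class of test functions $H$}, in arbitrary dimension and in infinite volume. In the torus this is handled by the compact-symmetry-group averaging of [KL99, Qu.Re.Va1999], which is unavailable here; the resolution is the reduction, via Lemma \ref{lemma: typical f}, to densities $f_N$ with a uniform one-edge Dirichlet-form bound, after which the one- and two-block estimates of Theorem \ref{thrm: supex} apply. Everything downstream — exponential tightness, the minimax exchange, the consistency of the finite-dissipation restriction, and the tilting argument for the lower bound — is essentially the standard Kipnis--Landim scheme, with the only genuinely new infinite-volume inputs being Lemma \ref{lemma: exp int} (replacing the $(A4)$-type hypothesis) and the local nature of the vague topology on $\mathcal{M}$. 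A secondary technical point worth care is that $J(H,\pi)$ and hence $\mathcal{I}^{\rm up}$ involve $\varphi(\rho)$ rather than the limit of $\lambda$-block-averages, so one must verify that the exceptional sets on which this identification fails are genuinely superexponentially negligible, which is where Lemma \ref{lemma: finite entropy dissipation} and the uniform ellipticity \eqref{eq: DH hyp on Phi'} of $\varphi$ enter.
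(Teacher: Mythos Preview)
Your proposal is correct and follows essentially the same approach as the paper. The paper itself does not give a detailed proof of Theorem~\ref{thrm: partial LDP} but explicitly defers to the standard Kipnis--Landim/BKL95 scheme, stating that ``the central pattern of deriving such bounds is identical to \cite[Theorem 1]{benois1995large}'' and focusing only on the two novel technical inputs---the superexponential estimate (Theorem~\ref{thrm: supex}, via Lemma~\ref{lemma: typical f}) and the entropy dissipation upgrade (Lemma~\ref{lemma: finite entropy dissipation})---both of which you correctly identify and place in the argument; the exponential tightness and absolute-continuity restriction you mention are handled in the paper by Lemma~\ref{lemma: eliminate dminusdac}.
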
 As discussed in the introduction, the above result is the main probabilistic input for the central Theorem \ref{thrm: main result}, but leaves open the question of whether the two rate functions coincide. In the absence of global convexity, this question remained open from the original works \cite{BKL95,KL99} until it was resolved by the first two named authors \cite{FehGes19} in finite volume under certain assumptions on $\Phi$; let us also refer to work by the third author \cite{heydecker2023large} for an example where the (candidate) rate function differs from the lower semicontinuous envelope. For the current work, Theorem \ref{thm:envelope} shows that, only assuming (A1-2), both rate functions $\mathcal{I}^{\rm up}, \mathcal{I}^{\rm lo}$ coincide with the rate function $\mathcal{I}$ defined at \eqref{lsc_01111'}, from which Theorem \ref{thrm: main result} follows.
\paragraph{\textbf{Overview of the Proof of Theorem \ref{thrm: partial LDP}}.} The central pattern of deriving such bounds is identical to \cite[Theorem 1]{benois1995large}, and we only focus on the two technical elements. In this section, we show how the proof of the \emph{superexponential estimate} \cite[Theorem 2]{benois1995large} must be adapted to infinite volume in $d\ge 2$. This is the content of Theorem \ref{thrm: supex} below, and allows a proof of Theorem \ref{thrm: partial LDP} when the definition of $\mathcal{I}^{\rm up}$ is weakened, to only assume that each $\pi_t$ admits a locally integrable density $\rho_t$, without the entropy dissipation bound. This is addressed by Lemma \ref{lemma: finite entropy dissipation}, from which one may upgrade the upper bound to the one stated above.  We start with some definitions, where we recall that $X=\N^{\Z^d}$ is the state space.\begin{definition}
	A cylinder function $\Psi:X\to [0,\infty)$ is a function which depends only on $(\eta(x),x\in A)$ for a finite set $A$. We say that $\Psi$ is \emph{of linear growth} if, for some constant $C$ and all $\eta\in X$, \begin{equation}
		\label{eq: lin growth} |\Psi(\eta)|\le C\Big(1+\sum_{x\in A}\eta(x)\Big). 
	\end{equation}
\end{definition} For such a function, we define the function of a real variable \begin{equation}
	\label{eq: tildepsi} \widetilde{\Psi}(\rho):=\mathbb{E}_{\nu_\rho}\left[\Psi(\eta)\right]
\end{equation} where $\nu_\rho$ is the equilibrium measure (\ref{eq: nurho}). For $x\in \mathbb{Z}^d$, we set $\tau_x:X\to X$ to be the configuration $\tau_x \eta(y):=\eta(y-x)$, and define the translates of the cylinder function $\tau_x\Psi:=\Psi\circ \tau_x$. Finally, for $N\in \mathbb{N}, \epsilon>0$, we define the averaging $\bar{\eta}^{N\epsilon}$ at the macroscopic scale $\epsilon$ by $$ \bar{\eta}^{N\epsilon}(x):=(2\lfloor N\epsilon \rfloor +1)^{-d}\sum_{y: |y-x|_\infty \le \lfloor N\epsilon \rfloor } \eta(y).$$ With this notation fixed, the first technical result is as follows.
\begin{theorem}\label{thrm: supex}
	Let $H\in C^\infty_c([0,T]\times\mathbb{R}^d)$, and let $\Psi$ be a cylinder function of linear growth for which the function $\widetilde{\Psi}:[0,\infty)\to [0,\infty)$ is Lipschitz. For $\epsilon>0$ and $\eta\in X$, define \begin{equation}
		\label{eq: local error} V(H,\Psi, N,\epsilon)(t,\eta):=\frac{1}{N^d}\sum_{x\in \mathbb{Z}^d} H\big(t, \frac{x}{N}\big)\big[\tau_x\Psi(\eta)-\widetilde{\Psi}(\bar{\eta}^{N\epsilon}(x))\big].
	\end{equation}  Then \begin{equation}
		\label{eq: supex conclusion} \limsup_{\delta\to 0}\limsup_{\epsilon\to \infty} \limsup_{N\to\infty} N^{-d}\log \mathbb{P}\Big(\int_0^t\left|V(H,\psi,N,\epsilon)(t, \eta_{N^2t})\right| dt>\delta\Big)=-\infty. 
	\end{equation}
\end{theorem}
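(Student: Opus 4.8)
The plan is to follow the classical Guo–Papanicolaou–Varadhan scheme for the superexponential estimate, but with the key modification that the supremum over densities $f_N$ is restricted using \emph{Lemma \ref{lemma: typical f}}, so that the two-block estimate can be closed in any dimension and in infinite volume. First I would reduce \eqref{eq: supex conclusion} to a static estimate: by the exponential Chebyshev inequality together with a Feynman–Kac/Rayleigh bound, for a fixed $\delta>0$ one bounds $N^{-d}\log\mathbb{P}(\cdots>\delta)$ by
\[
-\delta\,N^{-d}\cdot(\text{const}) \;+\; \sup_{f_N}\Big\{\int V(H,\Psi,N,\epsilon)(\eta)\,f_N(\eta)\,\nu_\gamma(d\eta) \;-\; N^{2-d}\,\mathfrak{d}(f_N)\Big\}
\]
where the supremum runs over probability densities $f_N$ with respect to $\nu_\gamma$ and $\mathfrak{d}(f_N)$ is the Dirichlet form. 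Here one uses that $V$ involves the fixed test function $H\in C^\infty_c$, which (a) makes the state-space localisation to a box of size $O(N)$ legitimate since $H$ is compactly supported, and (b), crucially, allows the invocation of Lemma \ref{lemma: typical f} to further restrict the supremum to those $f_N$ with a uniform per-edge bound $\mathfrak{d}_{x,y}(f_N)\le \kappa$, up to an error vanishing as $\kappa\to\infty$ uniformly in $N$. One then needs the integrability input: the cylinder function $\Psi$ has linear growth, so $\tau_x\Psi$ and $\widetilde\Psi(\bar\eta^{N\epsilon})$ are controlled by $\eta$, and Lemma \ref{lemma: exp int} provides the superlinear exponential moment $\mathbb{E}_{\nu_\gamma}[e^{\theta w(\eta(0))}]<\infty$ needed to discard large densities and to make all the entropy/Dirichlet-form manipulations rigorous (this plays the role of \cite[(A4)]{benois1995large}).

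Next I would perform the standard two-step replacement. The \textbf{one-block estimate}: replace $\tau_x\Psi(\eta)$ by $\widetilde\Psi(\bar\eta^{\ell}(x))$ for a large \emph{microscopic} box of size $\ell$; this uses the local ergodicity of the zero-range dynamics, the spectral gap inequality \eqref{eq: spectral gap} to compare the restricted Dirichlet form with the variance, and the Lipschitz continuity of $\widetilde\Psi$ together with the equivalence-of-ensembles estimate — here the argument is essentially dimension- and domain-independent and follows \cite{KL99,BKL95} verbatim once the exponential moment bound is in place. The \textbf{two-block estimate}: pass from the microscopic average $\bar\eta^{\ell}(x)$ to the macroscopic average $\bar\eta^{N\epsilon}(x)$, i.e.\ from scale $\ell$ to scale $N\epsilon$. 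This is the step that fails in $d\ge 2$ under the old $H^1$-type bound — as Proposition \ref{prop: no 2 block} shows, the naive estimate is genuinely false because $H^1(\mathbb{R}^d)\not\hookrightarrow C(\mathbb{R}^d)$. The resolution is to exploit the per-edge cutoff $\mathfrak{d}_{x,y}(f_N)\le\kappa$ supplied by Lemma \ref{lemma: typical f}: this is morally a $W^{1,\infty}$-bound rather than an $H^1$-bound, and $W^{1,\infty}$ \emph{does} embed into $C^{0,1}$, so the difference of box-averages at macroscopic distance $\epsilon$ can be controlled by $\kappa$ times a quantity that is $o(1)$ as first $N\to\infty$, then $\epsilon\to\infty$ (in the scaling of \eqref{eq: supex conclusion}, $\epsilon\to\infty$ is the relevant limit because $\bar\eta^{N\epsilon}$ uses a box of radius $\lfloor N\epsilon\rfloor$ and one wants this to be genuinely mesoscopic), and finally $\kappa\to\infty$ kills the cutoff error and $\delta\to 0$ is trivial in the $\limsup_{\delta\to 0}$.

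**The main obstacle** is exactly this two-block step: making precise the claim that the per-edge Dirichlet-form cutoff provides the missing regularity. Concretely one must show that, for densities $f_N$ with $\mathfrak{d}_{x,y}(f_N)\le \kappa$ for every edge, the expectation under $f_N\,\nu_\gamma$ of $\sum_x H(t,x/N)[\widetilde\Psi(\bar\eta^{\ell}(x))-\widetilde\Psi(\bar\eta^{N\epsilon}(x))]$ is small. The strategy is to write the difference of the two block averages as a telescoping sum of single-edge increments along paths of length $O(N\epsilon)$ connecting a site in the small box to a site in the large box, bound each increment using the edgewise Dirichlet form together with a transport/change-of-variables argument (via \eqref{eq: COV}), and then optimise: the number of edges is $O((N\epsilon)^d)$ but each contributes only $O(\mathfrak{d}_{x,y}(f_N)/(N\epsilon))$-type weight after a Cauchy–Schwarz/entropy inequality split, so the sum is $O(\kappa\,\epsilon)$ — wait, one must be careful that this is actually $o(1)$; in fact the correct bookkeeping (as in the torus averaging argument of \cite{KL99}, but now replacing the compact symmetry-group average by the edgewise cutoff) gives a bound of order $\kappa \cdot (\ell/(N\epsilon)) + (\text{equivalence of ensembles error})$, which vanishes in the iterated limit. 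The delicate points are: (i) ensuring all error terms are uniform in $N$, which is why Lemma \ref{lemma: typical f} must be quantitative; (ii) handling the truncation of $\eta$ at large values so that $\widetilde\Psi$'s Lipschitz bound is usable everywhere, which is where Lemma \ref{lemma: exp int} re-enters; and (iii) keeping track of the (harmless) boundary contributions near $\partial\,\mathrm{supp}(H)$, which are negligible because $H$ is fixed and smooth. Once the two-block estimate is closed, combining it with the one-block estimate and the reduction to the variational problem yields \eqref{eq: supex conclusion}, with the three limits taken in the stated order.
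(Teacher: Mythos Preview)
Your overall architecture matches the paper: reduce via Feynman--Kac to a variational problem over densities $f$, do a one-block/two-block decomposition, and invoke Lemma~\ref{lemma: typical f} to rescue the two-block step. But the quantitative content of the two-block step is where the argument lives, and your version of it is off in two linked places.

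First, the per-edge bound furnished by Lemma~\ref{lemma: typical f} is \emph{not} $\mathfrak{d}_{x,y}(f)\le \kappa$ for a fixed constant; it is $\mathfrak{d}_{x,y}(f)\le zN^{-2}$, with the error $\Gamma(z,H,\beta,\delta)\to 0$ as $z\to\infty$. The $N^{-2}$ scaling is the whole point: it is what converts an ``$H^1$-type'' global Dirichlet bound $\mathfrak{d}(f)\le CN^{d-2}$ into a ``$W^{1,\infty}$-type'' per-edge bound of the right order. With a fixed cutoff $\kappa$ independent of $N$, the path-summation below would diverge as $N\to\infty$. Second, the bookkeeping you give for the reduced Dirichlet form does not match the standard path argument. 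Joining the two boxes by a path of length $L\sim N\epsilon$ and applying Cauchy--Schwarz costs a factor $L$ times the sum of $\mathfrak{d}_{x_k,x_{k+1}}(f)$ along the path, plus $O(\ell^d)$ moves inside the boxes. With the correct per-edge bound this gives $\mathfrak{d}_{2,\ell}(f^{2,\ell}_{x,y})\le C z(\epsilon^2+\ell^d N^{-2})$, which vanishes as $N\to\infty$ then $\epsilon\to 0$; this is exactly the estimate \eqref{eq: 2blockdirichlet'} with $zN^{-2}$ replacing $CN^{-d}\mathfrak d(f)$. Your expression $\kappa\,\ell/(N\epsilon)$ does not arise from this accounting and would not close the argument.

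Two smaller points. The limit in \eqref{eq: supex conclusion} should be read as $\epsilon\to 0$ (a typo in the statement; compare the discussion around \eqref{eq: prototype two block est}); your attempt to justify $\epsilon\to\infty$ as ``mesoscopic'' is backwards, since mesoscopic means $1\ll N\epsilon\ll N$. And the paper does not merely ``use Lemma~\ref{lemma: exp int} to discard large densities'': it explicitly subtracts the superlinear term $\beta N^{-d}\sum_x a(x/N)w(\eta(x))$ from $V$ (defining $V(H,\Psi,N,\epsilon,\beta)$ in \eqref{eq: new functional}), which is what makes the functional bounded above by $C_\beta$ and hence allows the restriction $\mathfrak d(f)\le C_\beta\delta^{-1}N^{d-2}$ that feeds into Lemma~\ref{lemma: typical f}. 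This $-\beta w$ term also reappears inside the one- and two-block expectations \eqref{eq: prototype two block est} and is essential for cutting off large block averages.
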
 While the technical condition we need on $\Psi$ is implicit, in that it is written in terms of the averaged function $\widetilde{\Psi}$, this still leaves a wide class of functions, including $\Psi$ which are Lipschitz in $(\eta(x), x\in A)$. In applying Theorem \ref{thrm: supex} to derive Theorem \ref{thrm: partial LDP}, the only important application will be $\Psi=\lambda(\eta(0))$, which is justified by observing that this is guaranteed by \eqref{eq: DH hyp on Phi'}.

The proof of the superexponential estimate closely follows the analagous arguments in \cite{benois1995large,kipnis2013scaling}: we will now sketch the relevant arguments, discuss why a new technical tool is needed in $d\ge 2$ in infinite volume, which is provided by Lemma \ref{lemma: typical f}, and how this will allow us to modify the arguments of \cite{BKL95} to conclude the theorem. \\ \\ First, we fix a smooth, integrable function $a:\mathbb{R}^d\to (0,\infty)$ with $\|\nabla \log a\|_\infty<\infty$, and for $w$ as in Lemma \ref{lemma: exp int}, we argue as in \cite[Lemma 4.1]{benois1995large} to find \begin{equation} \label{eq: density}
	\limsup_{z\to \infty}\limsup_N N^{-d}\log \mathbb{P}\Big(\frac{1}{N^d}\int_0^T \sum_{x \in \Z^d} a\big(\frac{x}{N}\big) w(\eta_{N^2t}(x)) dt >z\Big) =-\infty.
\end{equation} This allows us to deduce the theorem from showing the same statement for $V(H,\Psi,N,\epsilon,\beta)$ for any $\beta>0$, for the new functional \begin{equation} \label{eq: new functional} V(H,\Psi,N,\epsilon,\beta)(t,\eta):=V(H,\Psi,N,\epsilon)(t,\eta)-\frac{\beta}{N^d}\sum_{x \in \Z^d}a\big(\frac{x}{N}\big)w(\eta(x)). \end{equation} It is standard, using the Feynman-Kac formula (see, for example, the proof of \cite[Theorem 2.1]{KOV89}), to reduce the new statement to proving, for any $\delta>0$, \begin{equation}
	\label{eq: FK prototype} \limsup_{\epsilon\to 0}\limsup_{N\to \infty} \sup_f\left\{\mathbb{E}_{f\nu_\rho}[V(H,\Psi,N, \epsilon,\beta)(t,\eta)]-\delta N^{2-d}\mathfrak{d}(f)\right\}\le 0
\end{equation} where the supremum runs over probability density functions $f$ with respect to $\nu_\rho$, and $\mathfrak{d}$ is the Dirichlet form \begin{equation}\label{eq: dirichlet form}
	\mathfrak{d}(f)=\sum_{x\sim y}\mathfrak{d}_{x,y}(f):=\sum_{x\sim y}\int_{X} \lambda(\eta(x))\Big[\sqrt{f(\eta+1_y-1_x)}-\sqrt{f(\eta)}\Big]^2 \nu_\rho(d\eta).
\end{equation} For future convenience, we have introduced notation for the contribution $\mathfrak{d}_{x,y}(f)$ for any bond $(x,y)$. It is an elementary remark that $V(H,\Psi,N,\epsilon,\beta)\le C_\beta$ is bounded, using the linear growth of $\Psi$ and the superlinear growth of $w$, and hence the supremum may be restricted to those $f$ with $\mathfrak{d}(f)\le C_\beta \delta^{-1} N^{d-2}$. \\ \\  
The standard proof of the analogous statements to \eqref{eq: FK prototype} in \cite[Theorem 2]{benois1995large}, \cite[Lemma 1.10]{kipnis2013scaling} proceeds by decomposing the error into a `one-block estimate' \cite[Lemma 4.2]{benois1995large},\cite[Lemma 3.1]{kipnis2013scaling}, which shows that the same error vanishes on a large microscopic scale, and the `two-block estimate', considering two boxes of microscopic side length $\ell \to \infty$ at a small macroscopic distance $\epsilon\to 0$. The one-block estimate generalises with only cosmetic modifications to the present setting of infinite volume, and the only difficulty in proving Theorem \ref{thrm: supex} arises in the two-block estimate.  Precisely, the argument in \cite[Lemma 4.3]{benois1995large} shows in dimension $d=1$ that, for all $\beta, \delta>0$, and writing $\limsup_{\ell,\ve,N \to \infty}$ for the triple-limit $\limsup_{\ell\to \infty} \limsup_{\epsilon \to 0} \limsup_{N\to \infty}$,
\begin{align}\label{eq: prototype two block est}
& \nonumber \limsup_{\ell,\ve,N \to \infty} \hspace{0.1cm}\sup_{x,y\in \Z^d: |x-y|\le N\epsilon}\sup_f \left\{\mathbb{E}_{f\nu_\rho}[|\bar{\eta}^\ell(x)-\bar{\eta}^\ell(y)|-\beta \left(w(\bar{\eta}^\ell(x))+w(\bar{\eta}^\ell(y))\right)]-\delta N^{2-d}\mathfrak{d}(f)\right\}
\\ & \le 0,
\end{align}
which implies that
\begin{align}\label{eq: prototype two block est'}
\nonumber &\limsup_{\ell,\ve,N \to \infty}\hspace{0.1cm} \sup_{x\in \Z^d}\sup_f \Big\{\mathbb{E}_{f\nu_\rho}\Big[\frac{1}{(2\epsilon N+1)^d}\sum_{\ell <|y|\le \epsilon N}|\bar{\eta}^{N\epsilon}(x)-\bar{\eta}^\ell(x+y)|
\\ & \qquad \qquad  \qquad \qquad \qquad -\beta \left(w(\bar{\eta}^\ell(x))+w(\bar{\eta}^\ell(x+y))\right)\Big]-\delta N^{2-d}\mathfrak{d}(f)\Big\} \le 0.
\end{align}
The argument for the the previous two equations relies on joining any two microscopic boxes $x+[-\ell, \ell], y+[-\ell, \ell]$ at macroscopic distance $\epsilon$ by a path $x_0=x, x_1, x_2, \dots, x_L=y$ of length $L\sim N\epsilon$. A reduced Dirichlet form $\mathfrak{d}_{2,\ell}$, on probability density functions on $\mathbb{N}^{[-\ell, \ell]^2}$ is then estimated on the restriction $f^{2,\ell}_{x,y}$ of $f$ to the microscopic boxes by summing the contributions $\mathfrak{d}_{x_k, x_{k+1}}(f)$ along the path to yield \begin{equation} \label{eq: 2blockdirichlet}
		\mathfrak{d}_{2,\ell}(f^{2,\ell}_{x,y})\le (N\epsilon+3)\mathfrak{d}(f).
	\end{equation} This is the point at which the dimension $d=1$ is crucial to derive (\ref{eq: prototype two block est}). Using the restriction $\mathfrak{d}(f)\le C_\beta \delta^{-1}(N\epsilon+3)N^{d-2}=C_\beta \delta^{-1}(\epsilon+3N^{-1})$, we obtain a bound on $\mathfrak{d}_{2, \ell}(f^{2,\ell}_{x,y})$ which vanishes in the double limit where first $N\to \infty$ and then $\epsilon\to 0$, which allows one to replace the supremum over $f$ in the limit by a supremum over $f^{2,\ell}$ satisfying $\mathfrak{d}_{2,\ell}(f^{2,\ell})=0$. On the other hand, in higher dimension $d\ge 2$, one only obtains a bound $\mathfrak{d}_{2,\ell}(f^{2,\ell}_{x,y})\le C_\beta \delta^{-1} (N\epsilon+3)N^{d-2}$, which diverges in the same limit. Indeed, to illustrate the non-triviality of adapting this argument to higher dimensions, we will give the following proposition. \begin{proposition}[No 2-Block Estimate Without Averaging] \label{prop: no 2 block} Let $d\ge 2$. For sufficiently small $\beta>0$, the limit in \eqref{eq: prototype two block est} is strictly positive, no matter how large $\delta >0$ is. In dimension $d\ge 3$, the same applies to both limits \eqref{eq: prototype two block est}, \eqref{eq: prototype two block est'}. \end{proposition}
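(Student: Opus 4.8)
The plan is to exhibit, for each $d\ge 2$ and sufficiently small $\beta>0$, a sequence of probability density functions $f=f_{\ell,\eps,N}$ for which the bracketed expression in \eqref{eq: prototype two block est} stays bounded below by a fixed positive constant while $N^{2-d}\mathfrak{d}(f)\to 0$, so that the $\delta N^{2-d}\mathfrak d(f)$ penalty is asymptotically harmless no matter how large $\delta$ is. The guiding heuristic is exactly the one flagged in the text: $\mathfrak d$ is the microscopic avatar of the Dirichlet energy, and in $d\ge2$ a function can have small $\dot H^1$ energy yet fail to be continuous — it can carry an order-one ``spike'' on a small set. So I would build $f$ so that under $f\nu_\rho$ the block average $\bar\eta^\ell(x)$ near the chosen site $x$ differs by order one from the typical density $\rho$, while the density $f$ relative to $\nu_\rho$ is a smooth, slowly-varying profile away from $x$. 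Concretely, take $x$ (and $y$ at macroscopic distance, playing a symmetric role with $\bar\eta^\ell(y)\approx\rho$) and let $f$ tilt the occupation numbers in a microscopic ball $B=x+[-\ell,\ell]^d$ — e.g. by reweighting with a factor like $\exp\bigl(\theta\sum_{z\in B}(\eta(z)-\rho)\bigr)$, normalised — so that $\E_{f\nu_\rho}[\bar\eta^\ell(x)]\ge \rho+c_0$ for a fixed $c_0>0$, uniformly in $\ell$. The spatial support of the tilt is a ball of $O(\ell^d)$ sites, which for fixed $\ell$ is vanishingly small on the macroscopic scale.

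The key computation is then the Dirichlet-form estimate. Only bonds $(z,z')$ with at least one endpoint in the support of the tilt contribute to $\mathfrak d(f)$; there are $O(\ell^{d-1})$ boundary bonds plus $O(\ell^d)$ interior bonds, and on each the integrand $\lambda(\eta(z))[\sqrt{f(\eta+1_{z'}-1_z)}-\sqrt f(\eta)]^2$ is, after the change-of-variables formula \eqref{eq: COV} and the product structure of $\nu_\rho$, controlled by $\E_{\nu_\rho}$ of a bounded quantity depending on $\theta$. Hence $\mathfrak d(f)\le C(\theta)\,\ell^{d}$, a quantity that does not grow with $N$. Therefore $N^{2-d}\mathfrak d(f)\le C(\theta)\ell^d N^{2-d}\to 0$ as $N\to\infty$ for every fixed $\ell$ (using $d\ge2$ so that $2-d\le 0$, with genuine decay when $d\ge 3$ and at worst boundedness when $d=2$ — in the $d=2$ case one takes $\theta=\theta(N)\to0$ slowly, or shrinks the tilt, to still kill the term while keeping $c_0$ fixed). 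Meanwhile $\E_{f\nu_\rho}[|\bar\eta^\ell(x)-\bar\eta^\ell(y)|]\ge c_0/2$ by construction (choosing $\theta$, resp.\ $\ell$, so concentration makes the average close to its mean), and the $-\beta(w(\bar\eta^\ell(x))+w(\bar\eta^\ell(y)))$ term is at most $\beta$ times a constant depending on $\theta$ but not on $N$, hence $<c_0/4$ once $\beta$ is small; thus the bracket exceeds $c_0/4>0$ in the triple limit. For the $d\ge3$ refinement concerning \eqref{eq: prototype two block est'}, one spreads the construction over many far-apart microscopic blocks $x+y$, $\ell<|y|\le\eps N$, tilting a positive fraction of them: the number of contributing bonds is then $O\bigl((\eps N)^d\ell^{d-1}\bigr)$ or so, giving $\mathfrak d(f)\lesssim (\eps N)^d \ell^{d-1}$ and hence $N^{2-d}\mathfrak d(f)\lesssim \eps^d \ell^{d-1} N^{2}$; since the averaged difference in \eqref{eq: prototype two block est'} is again order one, one needs the penalty to vanish, which forces an appropriate joint choice — here I would instead keep the number of tilted blocks $o\bigl((N)^{?}\bigr)$ large enough to keep the spatial average of $|\bar\eta^{N\eps}(x)-\bar\eta^\ell(x+y)|$ of order one but small enough that $N^{2-d}\mathfrak d\to0$, which is possible precisely when $d\ge3$ gives the extra power of $N^{-1}$ of room.

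The main obstacle I anticipate is the bookkeeping that makes the two competing requirements compatible: one needs the tilt strong enough (and concentrated enough, via a law-of-large-numbers or local-CLT bound for the i.i.d.\ coordinates under $\nu_\rho$) that $\bar\eta^\ell(x)$ is genuinely pushed away from $\rho$ by a deterministic $c_0$, yet weak/local enough that $\mathfrak d(f)$ carries no factor of $N$ (for \eqref{eq: prototype two block est}) or only a subcritical power of $N$ (for \eqref{eq: prototype two block est'}). Getting the Dirichlet-form bound cleanly will use \eqref{eq: COV}, \eqref{eq: Lipschitz} (to bound $\lambda$ linearly and control the reweighted exponential moments via Lemma \ref{lemma: exp int}), and the product structure so that only $O(|{\rm supp}|)$ bonds matter; getting the lower bound on the average uses a concentration estimate for sums of the i.i.d.\ tilted variables. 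The $d=2$ endpoint is the most delicate, since there $N^{2-d}=1$ and one has no room to spare: the fix is to let the tilt strength $\theta=\theta_N$ decay so that $\mathfrak d(f_N)=o(1)$ while, by choosing $\ell=\ell_N\to\infty$ commensurately, $\E_{f_N\nu_\rho}[\bar\eta^{\ell_N}(x)]-\rho$ still stays bounded below — this is the discrete analogue of the fact that in $d=2$, $\dot H^1$ functions need not be bounded (logarithmic capacity), which is exactly the Sobolev-embedding failure the proposition is meant to dramatize.
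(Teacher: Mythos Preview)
Your construction works cleanly for \eqref{eq: prototype two block est} in $d\ge 3$, and is in fact simpler than the paper's there: a local-equilibrium tilt on the single microscopic box $x+[-\ell,\ell]^d$ has $\mathfrak{d}(f)$ of order $\ell^{d-1}$ (only boundary bonds contribute for a product tilt, so your $O(\ell^d)$ is an overcount), hence $\delta N^{2-d}\mathfrak{d}(f)\to 0$ while the signal stays of order $|\rho_1-\rho|$.

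The genuine gap is $d=2$, which the proposition explicitly requires. There $N^{2-d}=1$, so the penalty is $\delta\,\mathfrak{d}(f)\sim \delta\,\ell$, which is not small and in fact diverges in the outermost limit $\ell\to\infty$. Your proposed fix does not work: letting $\theta=\theta_N\to 0$ makes the mean shift $\E_{f\nu_\rho}[\bar\eta^\ell(x)]-\rho\sim\theta\to 0$ as well, so the signal dies; and taking $\ell=\ell_N$ violates the prescribed order of limits, since $\ell$ is fixed before $N$. The log-capacity intuition you invoke is exactly right, but a spike on the \emph{microscopic} scale $\ell$ cannot realise it --- the transition region must be macroscopic.

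The paper instead takes $f_N=\tfrac{d\nu^N_u}{d\nu_\gamma}$ to be the slowly-varying local equilibrium for a \emph{macroscopic} profile $u\colon\R^d\to[\tfrac\gamma2,\tfrac{3\gamma}2]$, and observes the identity
\[
\mathfrak{d}_{x,y}(f_N)=\big(\sqrt{\varphi(u(x/N))}-\sqrt{\varphi(u(y/N))}\big)^2,
\]
so that $N^{2-d}\mathfrak{d}(f_N)\to\cD(u)=\|\nabla\varphi^{1/2}(u)\|_{L^2}^2$. The problem then becomes purely analytic: find $u$ with $\cD(u)$ arbitrarily small yet with points at macroscopic distance $<\epsilon$ where $u=\tfrac{3\gamma}{2}$ and $u=\tfrac\gamma2$. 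This is precisely the failure of $H^1(\R^2)\hookrightarrow C(\R^2)$; the paper takes $u(x)=\gamma\big(1+\tfrac12\sin(|\log\psi(|x|)|^{1/4})\big)$ with a suitable cutoff $\psi$, whose $\dot H^1$-norm is controlled by $|\log\psi(\infty)|^{-1/2}$. For \eqref{eq: prototype two block est'} in $d\ge 3$ the same idea applies with the profile $u(x)=\tfrac\gamma2(2+\sin(\psi(|x|)^{-\alpha}))$, $0<\alpha<\tfrac{d-2}{2}$, which is not of vanishing mean oscillation --- this is what your ``spread over many blocks'' sketch is reaching for, but the explicit profile makes the Dirichlet-form bookkeeping immediate.
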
 The failure of the proof is a question of how many `atypical' edges there can be for a given profile $f$, in the sense that the contributions to the Dirichlet form are much larger than average, i.e. $\mathfrak{d}_{x,y}(f)\ge zN^{-d}\mathfrak{d}(f)$, for some large $z$. A simple Markov inequality shows that, for fixed $f$, the proportion of edges in $[-MN, MN]^d$ for which this holds is at most $CM^{-d}z^{-1}$ and vanishes as $z\to \infty$, uniformly in $f, N$. However, it may be the case that the small proportion of `atypical' edges prevents us joining two boxes $x+[-\ell, \ell]^d, y+[-\ell, \ell]^d$ by a path whose length is of the correct order, illustrated in Figure \ref{fig: atypical edges}. This is exactly analagous to the failure of embedding theorems $H^1(\Omega)\to C(\Omega)$ in dimensions $d\ge 2$. \begin{figure}[htbp]
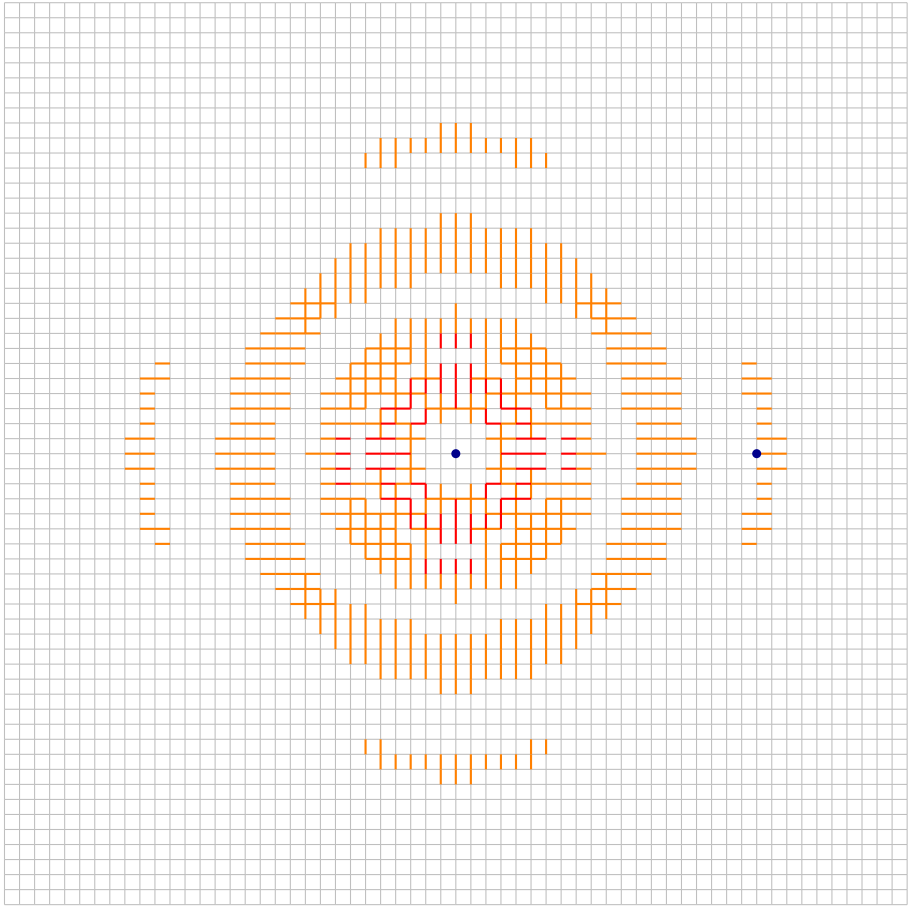
 \label{fig: atypical edges}
  \centering

  \caption{The geometric nature of the obstruction. Edges $\{x,y\}$ are highlighted in red, respectively orange, for which $\mathfrak{d}_{x,y}(f)$ lies in the 1$\%$ percentile, respectively top 10$\%$, for $f$ similar to the examples constructed in the proof of Proposition \ref{prop: no 2 block}. No path between $x=(0,0)$ and $y=(20,0)$ can avoid the red edges, nor a significant number of orange edges.}
  \label{fig:highlighted-edges}
\end{figure}

We will prove Proposition \ref{prop: no 2 block} at the end of this section by taking $f$ to be a slowly-varying local equilibrium for a profile $u$ with finite Fischer information, but which is neither continuous nor, in dimension $d\ge 3$, of vanishing mean oscillation.
    
    \noindent
    This problem is resolved for higher dimensions $d\ge 2$ in \cite[Lemma 3.2]{kipnis2013scaling}, but relying now on finite volume. The supremum in (\ref{eq: prototype two block est'}) is replaced by averaging over $x \in \mathbb{T}^d_N$ and taking the supremum over $|y|\le N\epsilon$, which makes the observable inside the expectation invariant under shifts: it therefore follows that the supremum is obtained at $f$ satisfying $\mathfrak{d}_{x,y}(f) \le CN^{-d}\mathfrak{d}(f)\le C_\beta \delta^{-1} N^{-2}$ for all neighbouring pairs $x,y$, and so \emph{all} edges are typical in the sense of the previous discussion. Once such a uniform bound is established, the same argument yields  \begin{equation} \label{eq: 2blockdirichlet'}
		\mathfrak{d}_{2,\ell}(f^{2,\ell}_{x,y})\le ((N\epsilon)^2+C\ell^d)(CN^{-d}\mathfrak{d}(f)) \le C_\beta \delta^{-1}(\epsilon^2+C\ell^dN^{-2})
	\end{equation} which again vanishes in the limit $N\to \infty, \epsilon\to 0$ as required. 
    
    \noindent
    In conclusion, this leaves as an open problem the two block estimate on the full space $\R^d$ for $d \ge 2$.
    
    \noindent
    The novel argument we give for (\ref{eq: FK prototype}) is as follows. In infinite volume, the same argument of global averaging is not available. Instead, we average only \emph{locally}. As a consequence of the regularity of the test function $H$, we will show that the original supremum (\ref{eq: FK prototype}) can be restricted to $f$ with a global `typicality' estimate $\mathfrak{d}_{x,y}(f)\le zN^{-2}$, up to a small error $\Gamma(z,H, \beta)$, which is uniform in $f,N$ and vanishes as $z\to \infty$. For fixed $z$, the same argument leading to (\ref{eq: 2blockdirichlet'}) yields an upper bound $Cz(\epsilon^2+C\ell^d N^{-2})$, and the same argument yields the two-block estimate. The one-block estimate follows by the same argument as \cite[Lemma 4.2]{benois1995large} in any dimension, and the only remaining term is the error $\Gamma(z,H,\beta)$, which vanishes as $z\to \infty$. The theorem will therefore follow from the same arguments as \cite{benois1995large,kipnis2013scaling}, which we have sketched above, once we prove the following result. 
    
    \begin{lemma}[Restriction to uniformly typical $f$]\label{lemma: typical f} Fix $H\in C^\infty_c([0,T]\times \mathbb{R}^d)$ and $\Psi$ as in Theorem \ref{thrm: supex}, $N<\infty, \epsilon>0, \delta>0$ and $\beta>0$. Then, for any $z<\infty$, there exists $\Gamma(z,H,\beta, \delta)$, which vanishes as $z\to \infty$, such that the supremum in \eqref{eq: FK prototype} is bounded by \begin{equation} \label{eq: regular f conclusion}
		\begin{split}
			& \limsup_{N\to \infty} \sup_f\left\{\mathbb{E}_{f\nu_\rho}[V(H,\Psi,N, \epsilon, \beta)(t,\eta)]-\delta N^{2-d}\mathfrak{d}(f)\right\} \\ &  \le \limsup_{N\to \infty} \sup_f\left\{\mathbb{E}_{f\nu_\rho}[V(H,\Psi,N, \epsilon,\beta_{z,\delta})(t,\eta)]-\delta N^{2-d}\mathfrak{d}(f): \mathfrak{d}_{x,y}(f)\le zN^{-2} \text{ for all }x\sim y\right\} \\ & \hspace{3cm} +\Gamma(z,H,\beta, \delta)
		\end{split}
	\end{equation} for some $0<\beta_{z,\delta}<\beta$ depending (only) on $z, \delta, \beta$.
		
	\end{lemma}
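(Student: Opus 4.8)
The plan is to reduce the supremum to uniformly typical densities by \emph{shift-averaging}, exploiting the slow variation of $H$ and $a$. Given a probability density $f$ with respect to $\nu_\rho$, set
\[
\bar f:=\frac{1}{K^d}\sum_{v\in\{0,1,\dots,K-1\}^d}f\circ\tau_v,\qquad K:=\lceil\kappa_z N\rceil,
\]
where $\kappa_z>0$ is a parameter, to be fixed, with $\kappa_z\to 0$ as $z\to\infty$; $\bar f$ is again a probability density since $\nu_\rho$ is a translation-invariant product measure. The idea is that $K$ is macroscopic but of \emph{small proportion}, so that averaging over these $K^d$ shifts spreads the Dirichlet form enough to make every bond typical while barely disturbing the functional.

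First I would record the a priori facts. Since $V(H,\Psi,N,\epsilon,\beta)\le C_\beta$, only densities with $\mathfrak d(f)\le C'_\beta\delta^{-1}N^{d-2}$ and with $\mathbb E_{f\nu_\rho}[\tfrac1{N^d}\sum_x a(x/N)w(\eta(x))]\le C'_\beta$ are relevant for the supremum, as otherwise the penalty terms already make the bracket very negative. For such $f$, joint convexity of $(s,t)\mapsto(\sqrt s-\sqrt t)^2$ combined with the translation covariance $\mathfrak d_{x,y}(f\circ\tau_v)=\mathfrak d_{x+v,y+v}(f)$ gives, for every bond,
\[
\mathfrak d_{x,y}(\bar f)\le\frac1{K^d}\sum_v\mathfrak d_{x+v,y+v}(f)\le\frac{\mathfrak d(f)}{K^d}\le\frac{C'_\beta\delta^{-1}N^{d-2}}{(\kappa_z N)^d}=\frac{C'_\beta\delta^{-1}}{\kappa_z^{\,d}}\,N^{-2},
\]
so the choice $\kappa_z:=(C'_\beta\delta^{-1}/z)^{1/d}$ makes $\bar f$ uniformly typical, $\mathfrak d_{x,y}(\bar f)\le zN^{-2}$ for all $x\sim y$; the same convexity also gives $\mathfrak d(\bar f)\le\mathfrak d(f)$.

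It then remains to compare the linear parts. A change of variables identifies $\mathbb E_{\bar f\nu_\rho}[V(H,\Psi,N,\epsilon,\beta_{z,\delta})]$ with $\mathbb E_{f\nu_\rho}[V(\bar H,\Psi,N,\epsilon)]-\tfrac{\beta_{z,\delta}}{N^d}\mathbb E_{f\nu_\rho}[\sum_x\bar a(x/N)w(\eta(x))]$, where $\bar H,\bar a$ denote the averages of $H,a$ over $\{v/N:v\in\{0,\dots,K-1\}^d\}$, a set of diameter $O(\kappa_z)$. Slow variation gives $\|\bar H-H(t,\cdot/N)\|_\infty\le C_d\kappa_z\|\nabla H\|_\infty$, and $\|\nabla\log a\|_\infty<\infty$ gives $\bar a\le(1+C_d\kappa_z)a(\cdot/N)$ pointwise once $z$ is large. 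The resulting error $\mathbb E_{f\nu_\rho}[V(\bar H-H,\Psi,N,\epsilon)]$ is bounded by $C_d\kappa_z\|\nabla H\|_\infty$ times an $f$-expectation of $\tfrac1{N^d}\sum_{x}\big(1+\tau_x\Psi(\eta)+\bar\eta^{N\epsilon}(x)\big)\mathbb 1[x\ \text{near}\ \supp H]$, which by the linear growth \eqref{eq: lin growth} of $\Psi$ and by $a$ being bounded below on a fixed neighbourhood of $\supp H$ is controlled, uniformly in $f$ and $N$, through $\mathbb E_{f\nu_\rho}[\tfrac1{N^d}\sum_x a(x/N)w(\eta(x))]\le C'_\beta$. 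Taking $\beta_{z,\delta}:=\beta/(1+C_d\kappa_z)<\beta$, which depends only on $z,\delta,\beta$, absorbs the factor $(1+C_d\kappa_z)$ in front of the $w$-penalty, and one concludes \eqref{eq: regular f conclusion} with $\Gamma(z,H,\beta,\delta)=C(\beta,H)\kappa_z\to0$. The $\limsup_{N\to\infty}$ present on both sides takes care of small $N$, where $\lceil\kappa_z N\rceil$ need not be of order $\kappa_z N$.

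The main obstacle is the uniform-in-$(f,N)$ control of the error $\mathbb E_{f\nu_\rho}[V(\bar H-H,\Psi,N,\epsilon)]$: this is where infinite volume genuinely bites, since the crude estimate involves occupation numbers over the whole neighbourhood of $\supp H$ and these can only be tamed using the superlinear penalty already built into $V(H,\Psi,N,\epsilon,\beta)$ — it is exactly the need to spend a sliver of this penalty, both for the error estimate and to absorb the distortion of the weight $a$ under averaging, that forces $\beta_{z,\delta}<\beta$. I would also emphasise why the argument uses \emph{shift}-averaging and not the seemingly more direct device of conditioning a density onto the $\sigma$-algebra invariant under jumps along the atypical bonds: that bond set is deterministic and may percolate across the relevant box even for large $z$, so such conditioning would redistribute an uncontrollable amount of mass and wreck the mesoscopic block averages $\widetilde\Psi(\bar\eta^{N\epsilon}(x))$, whereas shifting only relabels configurations and leaves the block-average structure intact.
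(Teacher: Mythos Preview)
Your proposal is correct and follows essentially the same strategy as the paper: shift-averaging the density $f$ over a box of macroscopic side $\sim\kappa_z N$ to produce a density $\bar f$ that is uniformly typical, while perturbing the functional only by an amount controlled through the slow variation of $H$ and $a$. The convexity argument for $\mathfrak d_{x,y}(\bar f)\le zN^{-2}$ and the change of variables relating $\mathbb E_{\bar f\nu_\rho}[V(H,\ldots)]$ to $\mathbb E_{f\nu_\rho}[V(\bar H,\ldots)]$ match the paper exactly (the paper averages over a symmetric cube $|y|_\infty\le\lfloor N\theta\rfloor$, you over $\{0,\ldots,K-1\}^d$, which is immaterial).

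The only genuine difference is in how the error $\mathbb E_{f\nu_\rho}[V(\bar H-H,\Psi,N,\epsilon)]$ is controlled. The paper bounds this \emph{pointwise in $\eta$} by combining the linear upper bound $CM^d\theta\|\nabla H\|_\infty(1+\bar\eta^{MN}(0))$ with a slice of the $w$-penalty and applying the Legendre transform $w^*$, obtaining $\Gamma = CM^d\theta\|\nabla H\|_\infty + \beta a_{\star,M}M^d w^*(C\theta\|\nabla H\|_\infty/(\beta a_{\star,M}))$. You instead first argue an \emph{a priori} restriction to densities with $\mathbb E_{f\nu_\rho}[\tfrac{1}{N^d}\sum_x a(x/N)w(\eta(x))]\le C'_\beta$, and then bound the error in mean using this and the superlinearity of $w$. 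Both routes are valid and yield $\Gamma(z,H,\beta,\delta)\to 0$ as $z\to\infty$; the paper's pointwise Legendre argument is slightly more self-contained (no auxiliary restriction needed), while your version makes the role of the penalty more transparent. Your $\beta_{z,\delta}=\beta/(1+C_d\kappa_z)$ corresponds to the paper's $\beta_{z,\delta}=\beta e^{-c\theta}$, both arising from the Lipschitz property of $\log a$.
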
 \begin{proof}[Proof of Lemma \ref{lemma: typical f}] Fix $H, \Psi, N, \epsilon, \beta$ as in the statement, and fix $f$; as already remarked under \eqref{eq: dirichlet form}, we may harmlessly assume that $\mathfrak{d}(f)\le C_\beta \delta^{-1} N^{d-2}$. Let us fix $z<\infty$, and choose $\theta:=(C_\beta/\delta z)^{1/d}$ and $N_0(z, \delta)$ large enough that, for all $N\ge N_0$, $\frac{N}{2\lfloor N\theta\rfloor+1} \le \theta^{-1}$. For $N\ge N_0$, let $H_{N,\theta}$ be the function given by discrete convolution \begin{equation}
		H_{N,\theta}(z,t):=\frac{1}{(2\lfloor N\theta\rfloor +1)^d}\sum_{y\in \Z^d: |y|_\infty\le \lfloor N\theta\rfloor}H\big(z-\frac{y}{N},t\big).
	\end{equation} First, observe that $\|H_{N,\theta}-H\|_\infty\le \theta\|\nabla H\|_\infty$, uniformly in $N$. We next consider the error in substituting $H_{N,\theta}$ for $H$ in the optimisation problem \eqref{eq: FK prototype}, observing that $V(\cdot,\Psi,N,\epsilon, \beta)$ is an affine function. For any $\eta\in X$, the contribution from $V(H_{N,\theta}-H,\Psi, N\epsilon)$ is bounded above by $$ V(H_{N,\theta}-H,\Psi, N\epsilon) \le CM^d\left(1+\bar{\eta}^{MN}(0)\right)\|H_{N,\theta}-H\|_\infty$$ for $C$ depending only on $\Psi$ and where $M$ is the (macroscopic) size of the support of $H$. Meanwhile, the convexity and superlinear growth of $w$ mean that the entropy term of \eqref{eq: new functional} is at least $$ \frac{\beta}{N^d}\sum_{x\in \Z^d} a\big(\frac{x}{N}\big)w(\eta(x))\ge a_{\star,M}M^d\beta w(\bar{\eta}^{MN}(0)) $$ where $a_{\star,M}:=\inf_{|x|\le M}a$. Combining the two, the difference is at most\begin{equation} \begin{split}
&|V(H_{N,\theta},\Psi,N,\epsilon,\beta)(t,\eta)-V(H,\Psi,N,\epsilon,\beta)(t,\eta)| \\[1ex]& \hspace{2cm} \le  CM^d\|H-H_{N,\theta}\|_\infty\left(1+\bar{\eta}^{MN}(0)\right)-\beta a_{\star,M}M^d w\left(\bar{\eta}^{MN}(0)\right) \\[1ex] & \hspace{2cm} \le CM^d\theta\|\nabla H\|_\infty+\beta a_{\star,M} M^d w^\star\Big(\frac{C\theta\|\nabla H\|_\infty}{\beta a_{\star,M}}\Big) =: \Gamma(z,H,\beta, \delta)
	\end{split} \end{equation} where $w^\star$ denotes the Legendre transform of the convex function $w$, and where we recall that $\theta$ is a function of $z, \delta$. For any function $w$ as in Lemma \ref{lemma: exp int}, $w^\star$ is continuous at $0$ and $w^\star(0)=0$, whence this error converges to $0$ as $z\to \infty$. Moreover, the previously displayed estimate is uniform in $\eta\in X$, so we may integrate against any competitor $f\nu_\rho$ to find \begin{equation} \label{eq: H to HNT}
		\mathbb{E}_{f\nu_\rho}[V(H,\Psi,N,\epsilon,\beta)(t,\eta)]\le \mathbb{E}_{f\nu_\rho}[V(H_{N,\theta},\Psi,N,\epsilon,\beta)(t,\eta)]+\Gamma(z,H,\beta, \delta).
	\end{equation} We now examine the expectation on the right-hand side. Using the definition of $H_{N,\theta}$, we write
\begin{align}\label{eq: VHNT}
& \mathbb{E}_{f\nu_\rho}[V(H_{N,\theta},\Psi,N,\epsilon)]
 \\ \nonumber &=\frac{1}{N^d(2\lfloor N\theta\rfloor +1)^d}\sum_{x\in \Z^d}\sum_{|y|_\infty\le N\theta}H\big(\frac{x}{N}\big)\mathbb{E}_{f\nu_\rho}\left[\left((\tau_{x+y}\Psi)(\eta)-\widetilde{\Psi}(\bar{\eta}^{N\epsilon}(x+y)\right)\right]
\\ \nonumber & =\frac{1}{N^d(2\lfloor N\theta\rfloor +1)^d}\sum_{x\in \Z^d}\sum_{|y|_\infty\le N\theta} H\big(\frac{x}{N}\big)\mathbb{E}_{\tau_y f\nu_\rho}\left[\left((\tau_{x}\Psi)(\eta)-\widetilde{\Psi}(\bar{\eta}^{N\epsilon}(x)\right)\right]
\\ \nonumber &  = \mathbb{E}_{\bar{f}_{N\theta}\nu_\rho}\left[V(H,\Psi,N,\epsilon)\right],
\end{align}
where $\bar{f}_{N,\theta}$ is given by averaging $\tau_y f$ over $y\in \Z^d$ with $|y|_\infty \le \lfloor N\theta\rfloor$. In the negative term involving $w$, we observe that, for any $y$ with $ |y|_\infty\le \lfloor N\theta \rfloor$, \begin{equation}\begin{split}
		\mathbb{E}_{\tau_yf\nu_\rho}\Big[\frac{\beta}{N^d}\sum_{x\in \Z^d}a\big(\frac{x}{N}\big)w\left(\eta(x)\right)\Big]&=\mathbb{E}_{f\nu_\rho}\Big[\frac{\beta}{N^d}\sum_{x\in \Z^d}a\big(\frac{x-y}{N}\big)w(\eta(x))\Big] \\& \le \mathbb{E}_{f\nu_\rho}\Big[\frac{\beta e^{\theta c}}{N^d}\sum_{x\in \Z^d}a\big(\frac{x}{N}\big)w(\eta(x))\Big]\end{split}
	\end{equation} where $c<\infty$ is the Lipschitz constant of the composition $\log a$. Averaging over $y$, it follows that \begin{equation} \label{eq: negative term}
		\mathbb{E}_{f\nu_\rho}\Big[\frac{\beta}{N^d}\sum_{x\in \Z^d} a\big(\frac{x}{N}\big)w(\eta(x))\Big] \ge \mathbb{E}_{\bar{f}_{N,\theta}\nu_\rho}\Big[\frac{\beta e^{-\theta c}}{N^d}\sum_{x\in \Z^d}a\big(\frac{x}{N}\big)w(\eta(x))\Big].
	\end{equation}
		Finally, by convexity of the Dirichlet form, $\mathfrak{d}(\bar{f}_{N,\theta})\le \mathfrak{d}(f)$. Combining this with \eqref{eq: H to HNT}, \eqref{eq: VHNT}, \eqref{eq: negative term}, we find that \begin{align} \label{eq: f vs fnt}
			& \mathbb{E}_{f\nu_\rho}\left[V(H,\Psi,N,\epsilon,\beta)\right]-\delta N^{2-d}\mathfrak{d}(f) \\ \nonumber & \le 
			\mathbb{E}_{\bar{f}_{N,\theta}\nu_\rho}\left[V(H,\Psi,N,\epsilon,\beta e^{-\theta c})\right]-\delta N^{2-d}\mathfrak{d}(\bar{f}_{N,\theta}) +\Gamma(z,H,\beta, \delta).
		\end{align} Finally, let us show that $\bar{f}_{N,\theta}$ has the desired typicality property. For any bond $x,y$, we use the convexity of $\mathfrak{d}_{x,y}$ to see that \begin{equation} \begin{split} \mathfrak{d}_{x,y}(\bar{f}_{N,\theta}) & \le \frac{1}{(2\lfloor N\theta\rfloor +1)^d}\sum_{|z|\le \lfloor N\theta\rfloor} \mathfrak{d}_{x,y}(\tau_z f) \\& \le \frac{1}{(2\lfloor N\theta\rfloor +1)^d}\sum_{|z|\le \lfloor N\theta\rfloor} \mathfrak{d}_{x+z,y+z}(f) \le \frac{1}{(2\lfloor N\theta\rfloor +1)^d}\mathfrak{d}(f).\end{split}\end{equation} Since it was assumed at the beginning that $\mathfrak{d}(f)\le C_\beta \delta^{-1} N^{2-d}$, it follows that $$ \mathfrak{d}_{x,y}(\bar{f}_{N,\theta})\le C_\beta \delta^{-1} N^{-2}\theta^{-d} \le zN^{-2}$$ for all $N\ge N_0$, using the definition of $\theta=\theta(z, \delta), N_0=N_0(z, \delta)$ at the beginning. We may therefore replace the right-hand side of (\ref{eq: f vs fnt}) by the right-hand side of the conclusion (\ref{eq: regular f conclusion}), and since $f$ was arbitrary up to the bound on the Dirichlet form, we optimise over $f$ to obtain the conclusion (\ref{eq: regular f conclusion}).
	\end{proof}
	
	We now give the proof of Proposition \ref{prop: no 2 block}, deferred from earlier, which asserts that the two-block estimate cannot be proven in dimension $d\ge 2$ without the use of additional considerations such as the averaging procedure \cite{KL99} or considerations using the regularity of the test function. For consistency with the rest of the paper, we will write the proof in the whole-space case for $\Z^d$, but the same considerations would apply in the periodic setting if one does not first perform an averaging operation. \begin{proof}[Proof of Proposition \ref{prop: no 2 block}] We prove first the assertion regarding \eqref{eq: prototype two block est} in dimension $d\ge 2$. The assertion regarding \eqref{eq: prototype two block est'} in dimension $d\ge 3$ is similar, and is discussed at the end. Since $\delta$ is, in contrast to its role in the previous arguments, allowed to be arbitrarily {\em large}, we will denote it instead $M$. Throughout, $\gamma$ is the parameter of the equilibrium measure $\nu_\gamma$.\\ \\ Fix $\epsilon, M>0$, and consider the profile \begin{equation}
	\label{eq: bad profile in d=2} u(x):=\gamma\Big(1+\frac12\sin\big(\left|\log (\psi(|x|))\right|^{1/4}\big)\Big)
\end{equation} for some $\psi:[0,\infty)\to (0,1)$ satisfying $\psi'\in C_c((0,\infty))$, $0\le \psi'\le 1$, $\psi(r)\ge r/2$ on $\text{supp}(\psi')$, and $\psi(\infty)=e^{-(n\pi)^4}$ for some $n\in \mathbb{N}$, to be chosen. Since $\frac\gamma2\le u\le \frac{3\gamma}{2}$, the Fischer information $\|\nabla u^{1/2}\|_{L^2_x}^2$ is comparable to the homogeneous $\dot{H}^1_x$ norm $\|\nabla u\|_{L^2_x}^2$, which can be found by direct computation to be bounded above by \begin{equation}
	\label{eq: h1 of bad profile in d=2} \|\nabla u\|_{L^2_x}^2 \le \frac{C}{\sqrt{|\log \psi(\infty)|}}.
\end{equation} This bound does not depend on the lower cutoff $\psi(0)$, and indeed can be made arbitrarily small by shrinking $\psi(\infty)\to 0$. Under the assumption ({\bf A1}), the nonlinearity $\varphi$ associated to the zero-range process satisfies $\varphi(u)\le Cu$ for all $u$, and $\varphi(u)\ge cu$ on the bounded set $[\frac\gamma2,\frac{3\gamma}{2}]$, meaning that the above displayed inequality holds for the entropy dissipation and \begin{equation}
	\label{eq: entropy dissipation bad profilde d=2} \mathcal{D}(u)\le \frac{C}{\sqrt{|\log \psi(\infty)|}}
\end{equation} for some constant $C=C(\gamma, \varphi)$ independent of the choice of $\psi$. Now, given the parameters $\epsilon, M$, we can construct $\psi$, still satisfying the assumptions above, so that $\psi(r)=r$ on some subinterval $[\delta',\delta]\subset (0,\epsilon], |\log \delta'|^{1/4}=|\log\delta|^{1/4}+2\pi$ and with the upper cutoff $\psi(\infty)$ chosen so that $\mathcal{D}(u)< \frac{\gamma}{4M}$. Due to the cutoff away from $x=0$, we obtain in this way a smooth profile $u$.\\ \\ We now take, as a competitor in \eqref{eq: prototype two block est'}, the density $f_N=\frac{d\nu^N_u}{d\nu_\gamma}$ of a slowly-varying local equilibrium $\nu^N_u$ characterised by $\mathbb{E}_{\nu^N_u}\eta(x)=u(x/N)$; such a density exists thanks to the compact support of $u-\gamma$ and the strict positivity and boundedness of $u$, and is given explicitly by \begin{equation}
	\label{eq: competitor f} f(\eta):=\prod_{|x|_\infty \le MN} \frac{\varphi(u(x/N))^k Z(\varphi(\gamma))}{\varphi(\gamma)^k Z(\varphi(u(x/N))}
\end{equation} for some sufficiently large $M$ that $u=\gamma$ when $|x|_\infty\ge M$. Let us now estimate the Dirichlet form $\mathfrak{d}(f_N)$: for any edge $\{x,y\}$ of $\mathbb{Z}^d$, we may use the integration by parts formula \eqref{eq: COV} to see that \begin{equation}
	\mathfrak{d}_{x,y}(f_N)=\varphi(\gamma)\mathbb{E}_{\nu_\gamma}\Big[\Big(\sqrt{f_N(\eta+1_x)}-\sqrt{f_N(\eta+1_y)}\Big)^2\Big].
\end{equation} The formula \eqref{eq: competitor f} produces the simplification $f_N(\eta+1_x)=\frac{\varphi(u(x/N))}{\varphi(\gamma)}f_N(\eta)$, and using $\mathbb{E}f_N(\eta)=1$ produces the discrete gradient \begin{equation} \label{eq: connect dirichlet form to fischer information 1}
N^2\mathfrak{d}_{x,y}(f_N)=\Big(\frac{\sqrt{\varphi(u(x/N))}-\sqrt{\varphi(u(y/N))}}{1/N}\Big)^2.\end{equation} Summing over $x,y\in \mathbb{Z}^d$, using the smoothness of $u$ and $\varphi$ on bounded regions produces \begin{equation} \label{eq: connect dirichlet form to fischer information 2}
	\limsup_N \mathfrak{d}(f_N)=\mathcal{D}(u)
\end{equation} which is controlled by the parameter choices after \eqref{eq: entropy dissipation bad profilde d=2}, and in particular the Dirichlet form is at most $M^{-1}$ for all sufficiently large $N$. Fix some $\theta\in (0,1)$ to be chosen later; for all sufficiently large $\ell\in \mathbb{N}$, the weak law of large numbers implies that $\bar{\eta}^\ell(x)>\frac{3\gamma}{2}-2\theta$ with $\nu^N_u$-probability exceeding $1-\theta$ whenever $u(x'/N)>\frac{3\gamma}{2}-\theta$ for all $x'$ with $|x'-x|<\ell$, and similarly $\bar{\eta}^\ell(y)<\frac\gamma2+2\theta$ with $\nu^N_u$-probability exceeding $1-\theta$ whenever $u(y'/N)<\frac\gamma2+\theta$ for all $x'$ with $|x'-x|<\ell$. For any such $x, y$ and $\ell$ large enough, depending only on $\theta$, we get \begin{equation} \label{eq: choice of l}
	\mathbb{E}_{f_N \nu_\gamma}\left[|\bar{\eta}^\ell(x)-\bar{\eta}^\ell(y)|\right]>(\gamma-4\theta)(1-2\theta)
\end{equation} and the right-hand side can be fixed to be larger than $\frac{3\gamma}{4}$ by tuning $\theta$. It is easy to deduce from \eqref{eq: a4} that $z \mapsto \mathbb{E}_{\nu_z}[w(\eta(0))]$ is locally bounded on $z\in [0,\infty)$, and together with the convexity of $w$ and boundedness of $u$ uniformly in $N$, \begin{equation}\label{eq: choice of lambda} \mathbb{E}\left[\beta(w(\bar{\eta}^\ell(x))+w(\bar{\eta}^\ell(y)))\right]\le C\beta<\frac\gamma4 \end{equation} as soon as $\beta>0$ is small enough. \\ \\ We now substitute all of this into the problem \eqref{eq: prototype two block est}, with $\delta$ replaced by $M$. First, fix $\ell$ large enough as discussed before \eqref{eq: choice of l}, $M<\infty$, and $\beta>0$ according to \eqref{eq: choice of lambda}, and fix arbitrarily small $\epsilon>0$. The construction above allows us to choose $u$, subject to all of the requirements above and with $\mathcal{D}(u)\le \frac{\gamma}{4M}$, such that there exist $x,y$ with $|x-y|<\epsilon$ but $u(x)=\frac{3\gamma}{2}, u(y)=\frac\gamma2$; for this $u$, take $f_N$ as above. Thanks to continuity, and because the choice of $\ell$ was independent of $N$, for all sufficiently large $N$ there exist lattice points $x, y\in \mathbb{Z}^d$ with $|x-y|_\infty\le N\epsilon$ such that, whenever $|x-x'|_\infty\le \ell, |y-y'|_\infty\le \ell$, then $u(x'/N)>\frac{3\gamma}{2}-\theta, u(y'/N)<\frac\gamma2+\theta$. Thanks to the choice of $\ell$, \eqref{eq: choice of l}, \eqref{eq: choice of lambda} give \begin{equation}
	\mathbb{E}_{f_N\nu_\gamma}\left[|\bar{\eta}^\ell(x)-\bar{\eta}^\ell(y)|-\beta(w(\bar{\eta}^\ell(x))+w(\bar{\eta}^\ell(y)))\right] >\frac{\gamma}{2}
\end{equation} while $MN^{2-d} \mathfrak{d}(f_N)\le \frac{\gamma}{4}$. We thus conclude that, for fixed $\epsilon$ and all sufficiently large $\ell$, \begin{equation}
	\label{eq: 2 block counterexample end} \limsup_{N \to \infty} \hspace{0.1cm}\sup_{x,y\in \Z^d: |x-y|\le N\epsilon}\sup_f \left\{\mathbb{E}_{f\nu_\gamma}[|\bar{\eta}^\ell(x)-\bar{\eta}^\ell(y)|-\beta \left(w(\bar{\eta}^\ell(x))+w(\bar{\eta}^\ell(y))\right)]-M N^{2-d}\mathfrak{d}(f)\right\} \ge \frac{\gamma}{2}
\end{equation} and the proof of the claim regarding \eqref{eq: prototype two block est} is complete. The proof of the claim regarding \eqref{eq: prototype two block est'} is identical, starting instead from profiles of the form \begin{equation}
	\label{eq: bad profile d=3} u(x)=\frac\gamma2\left(2+\sin\left(\psi(|x|)^{-\alpha}\right)\right); \qquad 0<\alpha<\frac{d-2}{2}.
\end{equation} \end{proof}

\section{Finiteness of Entropy Dissipation}  \label{sec: entropy dissipation} We now prove that the rate function may be taken to be infinite on paths $\pi \in \DD$ which do not satisfy the regularity requirement in Definition \ref{def: rate function}, namely, that $\pi=\rho\in \DD_{\rm a.c.}$ and the density $\rho$ satisfies $$ \int_0^T \cD(\rho_s)ds = \left\|\nabla \varphi(\rho)\right\|_{L^2_{t,x}}^2<\infty. $$ We start with the existence of a density. \begin{lemma} 
	\label{lemma: eliminate dminusdac} Assume the notation of Theorem \ref{thrm: partial LDP}. Then $\pi^N$ are exponentially tight in the topology of $\DD$: for any $z<\infty$, there exists a compact set $\mathcal{K}_z\subset \DD$ such that \begin{equation}
		\label{eq: ET} \limsup_{N\to \infty} N^{-d}\log \PP\left(\pi^N\not \in \mathcal{K}_z\right)\le -z.
	\end{equation} Moreover, for every $\pi \in \DD\setminus \DD_{\rm a.c.}$ and every $z<\infty$, there exists an open set $\mathcal{U}\ni \pi$ such that \begin{equation}
		\label{eq: eliminate dminusdac} \limsup_N N^{-d}\log \PP\left(\pi^N\in \mathcal{U}\right)\le -z.
	\end{equation}
\end{lemma}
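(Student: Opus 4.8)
The two assertions are of different flavours. The exponential tightness \eqref{eq: ET} is a classical step, and I would obtain it by the standard recipe. The key ingredient is the superexponential bound \eqref{eq: density} on the weighted total mass $\frac{1}{N^d}\int_0^T \sum_x a(x/N) w(\eta_{N^2t}(x))\,dt$, together with an analogous one-time-marginal bound on $\frac{1}{N^d}\sum_x a(x/N) w(\eta_{N^2t}(x))$ which follows from \eqref{eq: a4} by a direct Chernoff estimate under the stationary measure $\nu_\gamma$ (using that $\eta_0\sim\nu_\gamma$ and the jump rate is bounded in mean by linear growth, so the relative entropy of $\law(\eta_{N^2t})$ with respect to $\nu_\gamma$ stays controlled). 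These bounds give, for any $R<\infty$, superexponential control of $\sup_t \pi^N_t(B_R)$ and of a time-averaged modulus-of-mass quantity. Combined with an Aldous-type criterion for tightness in the Skorokhod space $\DD([0,T],(\M,d))$ — which reduces, since the test functions $f\in C_c(\R^d)$ are fixed and $\mathbb{Z}^d$ is the lattice, to controlling oscillations of $\langle f,\pi^N_t\rangle$ over small time intervals via the Dynkin martingale decomposition (the drift is $\Delta$ applied to a bounded-in-mean observable, the martingale bracket is $O(N^{-d})$) — one extracts a compact set $\mathcal{K}_z$ as required. This is routine and I would state it with a reference to \cite{benois1995large,kipnis2013scaling} for the martingale estimates, adapting only the cutoff at infinity to the whole space through the weight $a$.

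The second assertion \eqref{eq: eliminate dminusdac} is the substantive point. Fix $\pi\in\DD\setminus\DD_{\rm a.c.}$; then for \emph{some} time $t_0\in[0,T]$, the measure $\pi_{t_0}$ is not absolutely continuous, so there exists a nonnegative $f\in C_c(\R^d)$, a radius and a scale such that $\langle f,\pi_{t_0}\rangle$ is large while $\|f\|_{L^1}$ is small — more precisely, since singular measures charge sets of small Lebesgue measure, one can choose $f$ supported on a small-volume neighbourhood with $\int f\,dx$ as small as we like but $\langle f,\pi_{t_0}\rangle \ge c > 0$. Using right-continuity of $t\mapsto\pi_t$ in the vague topology, the inequality $\langle f,\pi_t\rangle > c/2$ persists on a short time interval $[t_0,t_0+\tau]$, and this defines an open neighbourhood $\mathcal{U}$ of $\pi$ in $\DD$ of the form $\{\pi':\int_{t_0}^{t_0+\tau}\langle f,\pi'_t\rangle\,dt > c\tau/4\}$. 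The plan is then to show that $\mathbb{P}(\pi^N\in\mathcal{U})$ is superexponentially small. On the event $\pi^N\in\mathcal{U}$ we have $\frac{1}{N^d}\int_{t_0}^{t_0+\tau}\sum_x f(x/N)\eta_{N^2t}(x)\,dt$ bounded below, while $f(x/N)$ is supported on $O(N^d \cdot \mathrm{vol}(\supp f))$ sites — a set of \emph{few} sites relative to $N^d$ because $\supp f$ has small volume. The decisive comparison: by Jensen/convexity of $w$ and its superlinear growth, a large number of particles concentrated on few sites forces $\frac{1}{N^d}\sum_x a(x/N) w(\eta_{N^2t}(x))$ (time-averaged) to be correspondingly large — quantitatively, if a total mass $\sim N^d$ sits on $\sim \varepsilon N^d$ sites then the average of $w(\eta)$ is at least $\sim w(1/\varepsilon)$, which $\to\infty$ as $\varepsilon\to 0$. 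Hence, choosing $f$ with small enough support, the event $\pi^N\in\mathcal{U}$ is contained in $\{\frac{1}{N^d}\int_0^T\sum_x a(x/N)w(\eta_{N^2t}(x))\,dt > z'\}$ for $z'$ as large as desired, and \eqref{eq: density} yields \eqref{eq: eliminate dminusdac}.

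I expect the main obstacle to be making the "few sites carry large mass forces large $w$-average" step fully rigorous in a way that is uniform in $N$ and robust to the fact that the concentration might happen only on a time-averaged basis rather than pointwise in $t$. The cleanest route is: on $\mathcal{U}$, the time-integral $\int_{t_0}^{t_0+\tau}\langle f,\pi^N_t\rangle\,dt\ge c\tau/4$ forces $\int_{t_0}^{t_0+\tau}\frac{1}{N^d}\sum_{x\in S_N}\eta_{N^2t}(x)\,dt \ge c'\tau$ where $S_N = \{x: f(x/N)>0\}$ has $|S_N|\le C\,\mathrm{vol}(\supp f)\,N^d =: \varepsilon N^d$; then, using $a$ bounded below on $\supp f$ and Jensen in the variable $x$ for the convex $w$,
\begin{equation}
\frac{1}{N^d}\sum_{x} a(x/N) w(\eta_{N^2t}(x)) \ge a_\star \varepsilon \cdot w\!\Big(\frac{1}{|S_N|}\sum_{x\in S_N}\eta_{N^2 t}(x)\Big),
\end{equation}
and finally Jensen in $t$ together with $\frac{1}{|S_N|}\int_{t_0}^{t_0+\tau}\sum_{x\in S_N}\eta_{N^2t}(x)\,dt \ge \tau^{-1}\cdot\tau\cdot(c'/\varepsilon)$ gives a lower bound $\sim \varepsilon\, w(c'/\varepsilon)\to\infty$. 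One must also handle the boundary case $t_0=T$ using left-limits instead, and verify that the neighbourhood $\mathcal U$ is genuinely open in the Skorokhod topology (it is, since it is defined by an open condition on a continuous-in-$\DD$ functional once $f$ is fixed). Beyond this, everything reduces to \eqref{eq: density} and standard Skorokhod-space tightness arguments, which I would invoke rather than reprove.
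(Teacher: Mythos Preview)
Your approach is correct and leads to the same conclusion, but it differs from the paper's in how the singularity of $\pi_{t_0}$ is converted into a $w$-moment lower bound. The paper packages this step via Fenchel--Young duality: it introduces the entropy functional $H_{w,R}(\mu)=\int_{B_R}w(\tfrac{d\mu}{dx})\,dx$ with dual representation $H_{w,R}(\mu)=\sup_{\varphi}\big(\langle\varphi,\mu\rangle-\int w^\star(\varphi)\big)$, so that $H_{w,R}(\pi_{t_0})=\infty$ directly furnishes a test function $\varphi$ with $\langle\varphi,\pi_{t_0}\rangle-\int w^\star(\varphi)>z$; the same duality then gives $\langle\varphi,\pi^N_s\rangle-\int w^\star(\varphi)\le N^{-d}\sum_{|x|\le RN}w(\eta_{N^2s}(x))$ pointwise in $\eta$, with no Jensen step required. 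Your route instead exploits that a singular measure charges sets of small volume, chooses $f$ with small support, and then runs two Jensen inequalities (in $x$ and in $t$) to force the time-averaged $w$-sum to be large. Both arguments are valid; the duality approach is slightly more systematic and avoids having to track the interplay between $c$, $\varepsilon$, $\|f\|_\infty$ and $\tau$, while your approach is more elementary and does not require introducing $w^\star$.

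Two minor points. First, the paper also separates off the case $\pi\notin\mathcal{C}$ using the modulus-of-continuity estimate that comes out of the tightness argument, and only applies the duality/entropy step when $\pi\in\mathcal{C}\setminus\DD_{\rm a.c.}$; you instead rely on right-continuity (and left-limits at $T$), which is fine for c\`adl\`ag paths but means you should be a little careful about the choice of $t_0$ when $\pi$ has a jump there. Second, for the openness of $\mathcal{U}$ in the Skorokhod topology, the functional $\pi'\mapsto\int_{t_0}^{t_0+\tau}\langle f,\pi'_t\rangle\,dt$ with a hard indicator in time is not obviously continuous on $\DD$ without a uniform local-mass bound; the paper sidesteps this by using a continuous time weight $\psi\in C([0,T],[0,\infty))$ with $\int\psi=1$ in place of the indicator, which is the cleanest fix for your construction as well. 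Finally, note that the paper closes with a direct Chernoff bound via stationarity of $\nu_\gamma$ and Lemma~\ref{lemma: exp int}, rather than invoking \eqref{eq: density}; your reduction to \eqref{eq: density} is equally valid.
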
 This allows us to consistently set $I(\pi)=\infty$ on such paths, as promised in Section \ref{sec: prelim}. This appears to be well-known; indeed, it is implicit in the rate function of \cite{BKL95}, but since we have not found a proof in the literature, the details are sketched for completeness. 

\begin{proof}[Sketch Proof of Lemma \ref{lemma: eliminate dminusdac}] The exponential tightness is the same as in \cite{benois1996large}, using the Lipschitz bound on the jump rates. The same argument also produces, for any $\varphi\in C^\infty_c(\R^d)$ and $\epsilon>0$, \begin{equation}
	\limsup_{\delta\downarrow 0}\limsup_{N\to \infty}N^{-d}\log \PP\left(\exists s, t\le T: |s-t|\le \delta, |\langle \varphi, \pi^N_s-\pi^N_t\rangle|>\epsilon\right)=-\infty
\end{equation} which proves the same statement \eqref{eq: eliminate dminusdac} when $\pi \not \in \mathcal{C}$.\\ \\   We now turn to the absolute continuity condition. For the same function $w$ as in Lemma \ref{lemma: exp int}, and all $R>0$, let us set $$ H_{w,R}(\mu):=\int_{|x|\le R}w\big(\frac{d\mu}{dx}\big) dx $$ which we understand to be infinite if $\mu(\cdot \cap \{|x|\le R\})\not \in L^1_{{\rm loc},+}$. Since $w$ is convex, each $H_{w,R}$ is convex and lower semicontinuous, and admits the dual representation $$ H_{w,R}(\mu)=\sup_{f\in C^\infty(B_R)} \Big(\langle f, \mu\rangle - \int_{\mathbb{R}^d}w^\star(f(x))dx\Big) $$where the supremum runs over smooth functions supported on $\{|x|<R\}$, valid whether or not $\mu \in L^1_{{\rm loc},+}$, and where $w^\star$ is the Legendre transform of the convex function $w$. We now fix $\pi \in \DD\setminus \DD_{\rm a.c.}$; if $\pi \not \in \mathcal{C}$, then the conclusion \eqref{eq: eliminate dminusdac} is already proven in the course of the tightness estimate, so we need only to deal with the case $\pi \in \mathcal{C}$. In this case, there exists $t\le T$ and for all $z<\infty$ there exists $\varphi \in C^\infty_c(\mathbb{R}^d)$ such that $$ \langle \varphi, \pi_t\rangle-\int_{\mathbb{R}^d} w^\star(\varphi(x))dx>z. $$ Thanks to the continuity assumption, the same is true, with $z+1$ in place of $z+2$, for all $s$ sufficiently close to $t$, and we may find a function $\psi \in C([0,T], [0,\infty))$ with $\int_0^T \psi ds=1$ such that $$ \int_0^T \psi(s)\Big(\langle \varphi, \pi_s\rangle ds-\int_{\mathbb{R}^d} w^\star(\varphi(x))dx\Big)>z+1. $$ The functional defined by the left-hand side is now readily seen to be continuous with respect to the topology of $\DD$, so the set $\mathcal{U}$ of $\pi'$ where the above inequality holds is an open set containing $\pi$. For the empirical measure $\pi^N$ of the particle system, using convex duality again $$ \langle \varphi, \pi^N_s\rangle -\int_{\mathbb{R}^d}w^\star(\varphi(x))dx \le N^{-d} \sum_{|x|_\infty \le RN} w(\eta_{N^2 t}(x))-\epsilon_N $$ for some error $\epsilon_N \to 0$, which depends on the continuity of $w^\star$ on the (compact) range of $\varphi$ and $\sup_{|x-y|\le d/N}|\varphi(y)-\varphi(x)|$, and where $R$ is such that $\text{supp}(\varphi)\subset[-R,R]^d$. We now restrict to $N$ sufficiently large that $\epsilon_N<1$, for which  $$ \{\pi^N\in \mathcal{U}\}\subset \Big\{\int_0^T \psi(s)N^{-d}\sum_{|x|_\infty \le RN}w(\eta_{N^2 s}(x))ds >z\Big\}.$$ For the same $\theta>0$ as in Lemma \ref{lemma: exp int}, we use Jensen's inequality to obtain\begin{equation*}
	\begin{split}
		\mathbb{E}\Big[\exp\Big(\theta \int_0^T \psi(s)\sum_{|x|_\infty \le RN}w(\eta_{N^2 s}) ds\Big) \Big] &  \le 	\mathbb{E}\Big[\int_0^T \theta \psi(s) \exp\Big( \sum_{|x|_\infty \le RN}w(\eta_{N^2 s}(x)) \Big) ds \Big] \\ & = 	\mathbb{E}\Big[\exp\Big(\theta \sum_{|x|_\infty \le RN}w(\eta_0(x)) \Big) ds \Big]
	\end{split}
\end{equation*} where the final line follows using stationarity of the process and the normalisation $\int \psi ds=1$. Using Lemma \ref{lemma: exp int}, the right-hand side is at most $e^{CN^d}$, for some fixed $C$, and a Chebychev estimate produces $$N^{-d} \log \PP\left(\pi^N\in \mathcal{U}\right)\le C-\theta z. $$ Since the right-hand side can be made arbitrarily negative, we are done.  \end{proof} The main result of this section, which completes the proof of Theorem \ref{thrm: partial LDP}, is as follows.
	\begin{lemma} \label{lemma: finite entropy dissipation} Let $\rho\in \mathbb{D}_{\rm a.c.}$ be such $\int_0^T \mathcal{D}(\rho_s)ds=\infty.$ Then for every $z<\infty$ there exists an open set $\mathcal{U}\ni u$ such that \begin{equation}
		\limsup_N N^{-d}\log \mathbb{P}\left(\pi^N \in \mathcal{U}\right)\le -z. 
	\end{equation}  \end{lemma}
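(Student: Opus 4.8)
\textbf{Proof proposal for Lemma \ref{lemma: finite entropy dissipation}.}
The plan is to establish the contrapositive form of the estimate by showing that the superexponential estimate Theorem \ref{thrm: supex}, applied with $\Psi=\lambda(\eta(0))$, forces the limiting density $\rho$ to have finite time-integrated entropy dissipation $\int_0^T \cD(\rho_s)\,ds$ on any event of probability decaying slower than $e^{-zN^d}$ for all $z$. The key observation is that, on the superexponentially likely event where $V(H,\lambda(\eta(0)),N,\epsilon)$ is small for all test functions $H$ in a fixed countable family, the empirical density satisfies a closed (approximate) weak formulation of $\partial_t \rho = \Delta\varphi(\rho)$ driven by a martingale term, and a Gaussian-type concentration for that martingale yields, after replacing $\varphi(\eta)$-type quantities by $\varphi(\bar\eta^{N\epsilon})$ via Theorem \ref{thrm: supex}, a uniform lower bound control on a quantity converging to $\frac12\|\nabla\varphi^{1/2}(\rho)\|_{L^2_{t,x}}^2$ (cf. the functional $J(H,\rho)$ introduced before \eqref{eq: Ior}). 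Concretely, I would first recall that for $H\in C^{1,3}_c([0,T]\times\R^d)$ the Dynkin martingale $M^N_t(H) := \langle H_t,\pi^N_t\rangle - \langle H_0,\pi^N_0\rangle - \int_0^t \langle (\partial_s + \tfrac12\Delta)H_s, \pi^N_s\rangle\,ds - \text{(drift from }\lambda\text{)}$ has predictable bracket of order $N^{-d}\int_0^t \langle |\nabla H_s|^2, \varphi(\pi^N_s)^{\sim}\rangle\,ds$ plus lower-order corrections, so that the exponential martingale $\exp\{N^d(M^N_T(H) - \tfrac12 \langle M^N(H)\rangle_T)\}$ has expectation $\le 1$ up to $o(N^d)$ in the exponent.

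The second step combines this with Theorem \ref{thrm: supex}: on the superexponentially likely event, one may replace the cylinder average $\widetilde{\lambda}(\bar\eta^{N\epsilon})=\varphi(\bar\eta^{N\epsilon})$ inside the bracket by an expression built from $\pi^N$, and passing $N\to\infty$, $\epsilon\to 0$ along a subsequence realising $\pi^N\to\rho$ yields that
\begin{equation*}
\langle H_T,\rho_T\rangle - \langle H_0,\rho_0\rangle - \int_0^T\langle\partial_s H_s,\rho_s\rangle\,ds - \tfrac12\int_0^T\int_{\R^d}\varphi(\rho_s)(\Delta H_s + |\nabla H_s|^2)\,dx\,ds \le 0
\end{equation*}
for every $H$ in the countable family, on an event whose probability does not decay at rate $e^{-zN^d}$ for any $z$ — i.e. $J(H,\rho)\le 0$ for all such $H$. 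Optimising over $H$ (using density of the countable family and the fact that $\varphi'$ is bounded above and below by \eqref{eq: DH hyp on Phi'}, so that $\sqrt{\varphi(\rho)}\in \dot H^1$ iff $\rho\in\dot H^1$ locally) gives $\sup_H J(H,\rho) = \frac12\int_0^T\cD(\rho_s)\,ds$ by the standard computation: the supremum of $-\int\varphi(\rho)(\Delta H + |\nabla H|^2) = \int \nabla\varphi(\rho)\cdot\nabla H - \int\varphi(\rho)|\nabla H|^2$ over smooth compactly supported $H$ equals $\frac14\int |\nabla\varphi(\rho)|^2/\varphi(\rho) = \int|\nabla\varphi^{1/2}(\rho)|^2 = \cD(\rho_s)$ after integrating in time (here one also needs the initial-data term $\langle H_0,\rho_0\rangle$ to be harmless, which follows since we may take $H_0=H_T=0$, or handle it via exponential tightness of $\pi^N_0$ as in Lemma \ref{lemma: eliminate dminusdac}). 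Hence $\int_0^T\cD(\rho_s)\,ds<\infty$ on that event.

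The third step is to turn this into the stated open-set statement. Fix $\rho\in\DD_{\rm a.c.}$ with $\int_0^T\cD(\rho_s)\,ds=\infty$ and $z<\infty$. Since the sup of the functionals $J(H,\cdot)$ is $+\infty$ at $\rho$, pick a single $H$ with $J(H,\rho) > $ (a threshold chosen so that the Gaussian bound gives decay rate $> z$, say $J(H,\rho)>2z + $ the constant absorbing the $\lambda$-drift correction and the one from exponential tightness of the initial data). The map $\pi\mapsto J(H,\pi)$ is lower semicontinuous on $\DD$ when restricted appropriately — more precisely, $\pi\mapsto \langle H_T,\pi_T\rangle - \langle H_0,\pi_0\rangle - \int\langle\partial_s H,\pi_s\rangle - \frac12\int\varphi(\pi_s)(\Delta H + |\nabla H|^2)$ is continuous on $\DD_{\rm a.c.}$ and we extend it to be continuous on $\DD$ since $H$ is fixed and smooth (the $\varphi(\pi)$-term needs the truncation/density argument, so it is cleanest to take $\mathcal U$ inside $\DD_{\rm a.c.}$ after first using Lemma \ref{lemma: eliminate dminusdac} to discard $\DD\setminus\DD_{\rm a.c.}$). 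Then $\mathcal U := \{\pi : J(H,\pi) > z'\}$ for a suitable $z' > z$ is open, contains $\rho$, and by the exponential Chebyshev/Dynkin-martingale estimate of step one combined with Theorem \ref{thrm: supex} to control the one remaining nonlinear term, $\limsup_N N^{-d}\log\PP(\pi^N\in\mathcal U)\le -z$.

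The main obstacle I expect is the interchange of limits and the passage from the discrete martingale bracket $N^{-d}\int \langle |\nabla H|^2, \widetilde\lambda(\bar\eta^{N\epsilon})\rangle\,ds$ to the continuum quantity $\int\int\varphi(\rho_s)|\nabla H_s|^2$: this requires Theorem \ref{thrm: supex} to replace local functions by functions of the block average, and then the block average $\bar\eta^{N\epsilon}$ must be shown to converge to $\rho_s$ in the relevant topology along the convergent subsequence — on a set of the appropriate (non-small) probability. Care is needed because Theorem \ref{thrm: supex} is stated for a fixed smooth $H$ with the triple limit $\limsup_{\delta}\limsup_{\epsilon}\limsup_N$, so the Gaussian bound must be applied before removing $\epsilon$, and the lower semicontinuity must survive the $\epsilon\to 0$ limit; this is exactly the sort of bookkeeping carried out in the standard large deviation upper bound argument of \cite{benois1995large}, so I would cite that structure and only highlight where the infinite-volume setting requires the exponential tightness of Lemma \ref{lemma: eliminate dminusdac} (to restrict attention to $\DD_{\rm a.c.}$ and to a relatively compact set of profiles) and the $N$-uniform density bound \eqref{eq: density}.
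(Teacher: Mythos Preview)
Your overall strategy matches the paper's: pick a test function $H$ so that the entropy-dissipation functional is large at $\rho$, define a neighbourhood via that functional, and bound the probability using an exponential/Feynman--Kac estimate combined with the superexponential replacement Theorem~\ref{thrm: supex}. Two points, however, separate your sketch from a working proof.

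\textbf{The open set is not open.} The map $\pi\mapsto \int_0^T\int_{\R^d}\varphi(\pi_s)(\Delta H+|\nabla H|^2)$ is \emph{not} continuous, or even lower semicontinuous, in the vague topology of $\DD$: it is a nonlinear function of the density, and restricting to $\DD_{\rm a.c.}$ does not help (vague convergence of densities does not imply convergence of $\int\varphi(\rho^n)G$). So your set $\mathcal{U}=\{J(H,\cdot)>z'\}$ need not be open, and the conclusion of the lemma is a statement about open sets. The paper resolves this by inserting a spatial mollification: with $\alpha_\epsilon=(2\epsilon)^{-d}1_{[-\epsilon,\epsilon)^d}$ and $\rho^\epsilon=\rho*\alpha_\epsilon$, the map $\pi\mapsto \int_0^T\int_{\R^d}\varphi(\pi^\epsilon_s)G\,ds\,dx$ \emph{is} continuous (each $\pi^\epsilon_s(x)$ is a linear functional of $\pi$, $G$ has compact support, and $\varphi$ is Lipschitz), so $\mathcal{U}:=\{v:\int\varphi(v^\epsilon)G>z+2\}$ is genuinely open. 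One then chooses $\epsilon$ small enough that both $\rho\in\mathcal{U}$ (by approximation of $\varphi(\rho)$ by $\varphi(\rho^\epsilon)$) and the superexponential estimate with $G=-\Delta H-|\nabla H|^2$ has rate at least $z$. This mollification, together with the link to $\bar\eta^{N\epsilon}$ in Theorem~\ref{thrm: supex}, is what makes the argument close.

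\textbf{The functional and the core estimate.} The paper does not use the full $J(H,\rho)$ with its linear-in-$\rho$ boundary and time-derivative terms; instead it uses only the static variational representation $\frac14\int_0^T\cD(\rho_s)\,ds=\sup_H\int\varphi(\rho)(-\Delta H-|\nabla H|^2)$ (Lemma~\ref{lemma: variational rep}). This avoids having to control $\langle H_T,\pi^N_T\rangle$ etc., and reduces the probabilistic input to the single claim
\[
\limsup_N N^{-d}\log\mathbb{E}\exp\Big(\sum_x\lambda(\eta_{N^2t}(x))\big(-\Delta H-|\nabla H|^2\big)\big(t,\tfrac{x}{N}\big)-\beta w(\eta_{N^2t}(x))1_{|x|\le MN}\Big)\le 0,
\]
proven by Feynman--Kac and a pointwise bound: for each edge $\{x,y\}$, the quantity $\mathbb{E}_{f\nu_\rho}[\Xi(H,N,\eta,x,y)]$ (the discrete version of $\lambda(\eta)(-\Delta H-|\nabla H|^2)$ across that edge) is dominated by the single-edge Dirichlet form $\mathfrak{d}_{x,y}(f)$, via Cauchy--Schwarz and the change-of-variables \eqref{eq: COV}. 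Your exponential-martingale route is morally equivalent, but note that the conclusion of your step~2, ``$J(H,\rho)\le 0$ for all $H$'', is too strong as written---that would force $\rho$ to solve the hydrodynamic equation exactly. What the martingale bound actually gives is $\mathbb{P}(J(H,\pi^N)>z)\lesssim e^{-zN^d}$ for each fixed $H$ and $z$, which is precisely what you use in step~3; step~2 as stated should be dropped.
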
  Our proof is based on the following representation of the functional $v\mapsto \|\nabla \varphi^{1/2}(v)\|_{L^2_x}^2 \in [0,\infty]$, which may be proven with some elementary Hilbert space theory.
\begin{lemma}[Variational Representation of $\cD$]\label{lemma: variational rep} For $v\in L^1_{{\rm loc},+}$, we have the equality 
\begin{equation}
	\label{eq: variational rep 1} \frac14\cD(v)=\sup\Big\{\int_{\mathbb{R}^d}\varphi(v(x))(-\Delta H(x)-|\nabla H|^2) dx: H \in C^2_c(\mathbb{R}^d)\Big\}
\end{equation} allowing the case $\infty=\infty$. Consequently, for any $\rho\in \DD_{\rm a.c.}$, it holds that \begin{equation}
	\label{eq: variational rep 2} \frac14\int_0^T\cD(\rho_s)ds=\sup\Big\{\int_0^T\int_{\mathbb{R}^d}\varphi(\rho_s(x))(-\Delta H(s,x)-|\nabla H(s,x)|^2) ds dx: H \in C^\infty_c([0,T]\times \mathbb{R}^d)\Big\}.
\end{equation}
	\end{lemma}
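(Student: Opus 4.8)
\textbf{Proof proposal for Lemma \ref{lemma: variational rep}.}

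The plan is to prove \eqref{eq: variational rep 1} first, as \eqref{eq: variational rep 2} then follows by a near-identical argument with the time variable as a parameter. Throughout, write $w := \varphi^{1/2}(v) \in L^2_{\rm loc}(\R^d)$, which is a well-defined element of $L^2_{\rm loc}$ since $\varphi$ is bounded on bounded sets and $v \in L^1_{{\rm loc},+}$ (here one uses $0 \le \varphi'\le A$, so $\varphi(v) \le A v$, hence $\varphi^{1/2}(v) \le \sqrt{A}\, v^{1/2}$, which is locally square-integrable). The quantity $\cD(v) = \|\nabla w\|_{L^2}^2$ is by definition $+\infty$ unless $w \in \dot H^1(\R^d)$. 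The key algebraic identity, for $H \in C^2_c(\R^d)$, is the integration-by-parts/completing-the-square computation
\begin{equation}\label{eq: cts}
\int_{\R^d} \varphi(v)\big(-\Delta H - |\nabla H|^2\big) \, dx = \int_{\R^d} w^2 \big(-\Delta H - |\nabla H|^2\big) \, dx,
\end{equation}
and the goal is to show its supremum over $H \in C^2_c$ equals $\tfrac14 \|\nabla w\|_{L^2}^2$.

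First I would handle the case $w \in \dot H^1(\R^d)$ (finite Fischer information). For the inequality $\sup(\cdots) \le \tfrac14\cD(v)$: for any $H \in C^2_c$, since $w \in \dot H^1$ we may integrate by parts to write $\int w^2(-\Delta H)\,dx = \int \nabla(w^2)\cdot\nabla H\,dx$; however $\nabla(w^2) = 2w\nabla w$ only makes sense distributionally if $w^2 \in W^{1,1}_{\rm loc}$, which holds because $w \in \dot H^1 \cap L^2_{\rm loc}$ gives $w\nabla w \in L^1_{\rm loc}$ by Cauchy–Schwarz. Then $\int w^2(-\Delta H - |\nabla H|^2)\,dx = \int (2w\nabla w\cdot\nabla H - w^2|\nabla H|^2)\,dx = \int\big(|\nabla w|^2 - |\nabla w - w\nabla H|^2\big)\,dx \le \int|\nabla w|^2\,dx = \cD(v)$. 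Wait — this gives the bound with constant $1$, not $\tfrac14$; the correct completion of the square is $2w\nabla w \cdot\nabla H - w^2|\nabla H|^2 = \tfrac14|\nabla w|^2 - |{\tfrac12}\nabla w - w\nabla H|^2$, so indeed the right-hand side of \eqref{eq: cts} is at most $\tfrac14\|\nabla w\|^2$. For the reverse inequality, I would choose a maximizing sequence: take $H = \tfrac12 \log(w_\epsilon + \epsilon)$ regularized and cut off, where $w_\epsilon$ is a mollification of $w$; along this sequence $w\nabla H \to \tfrac12 \nabla w$ in $L^2$ and the term $|{\tfrac12}\nabla w - w\nabla H|^2$ vanishes. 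The approximation must be done carefully: mollify $w$, truncate the logarithm (replace $\log$ by a bounded smooth function agreeing with it on the relevant range), and multiply by a spatial cutoff $\chi_R$; then pass $\epsilon \to 0$ and $R \to \infty$, using $w \in \dot H^1 \cap L^2_{\rm loc}$ and dominated convergence. One can also phrase this abstractly via Hilbert space theory: the map $\nabla H \mapsto \int(2w\nabla w \cdot \nabla H - w^2|\nabla H|^2)$ is a concave functional on the closure of $\{w\nabla H : H \in C^2_c\}$ inside $L^2(\R^d;\R^d)$, and its supremum is computed by the orthogonal projection of $\tfrac12\nabla w$ onto that closed subspace — but $\tfrac12\nabla w$ itself lies in the closure (approximate $w$ by truncated logs), so the supremum is exactly $\tfrac14\|\nabla w\|^2$.

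The main obstacle is the case $\cD(v) = +\infty$, i.e. $w \notin \dot H^1(\R^d)$ (which includes $w \in L^2_{\rm loc}$ but not $\dot H^1$, as well as $w$ for which $\nabla w$ is only a distribution, not an $L^2$ function). Here I must show the supremum on the right of \eqref{eq: variational rep 1} is also $+\infty$. The idea: if the supremum were finite, say equal to $S < \infty$, then for every $H \in C^2_c$ we would have $\int w^2(-\Delta H)\,dx \le S + \int w^2|\nabla H|^2\,dx$. I would use this to produce a uniform $L^2$-bound on the distributional gradient $\nabla w$. Concretely, test against $H = \eta g$ where $g$ ranges over a bounded set in $C^2_c$ and $\eta$ is a fixed cutoff, and quantify: the bilinear form $(g,g) \mapsto$ (suitably normalized version of the left side) is then bounded, and by a Riesz-representation / closability argument on $\dot H^1$ this forces $w \in \dot H^1_{\rm loc}$ with $\|\nabla w\|_{L^2(B_R)}^2$ uniformly bounded in $R$ by (a constant times) $S$, contradicting $w \notin \dot H^1$. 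The technical care needed is to track the $|\nabla H|^2$ term — it is quadratic in $H$, so one should work on balls where one first gets local $\dot H^1$ regularity (on $B_R$, choosing $H$ supported in $B_R$ with $\|\nabla H\|_\infty$ controlled), then send $R\to\infty$ to get a global bound. This step — showing finiteness of the variational problem implies finiteness of $\cD$ — is where the real work lies; the rest is integration by parts and standard density/mollification arguments. Finally, \eqref{eq: variational rep 2}: apply \eqref{eq: variational rep 1} pointwise in $t$ to $\rho_t$, integrate in $t$, and use that test functions of product form $H(t,x) = \psi(t)\phi(x)$ with $\psi \ge 0$, $\int\psi = 1$ are dense enough — more precisely, monotone convergence in the supremum combined with the fact that $C^\infty_c([0,T]\times\R^d)$ test functions can approximate the pointwise-optimal choices, together with measurability of $t \mapsto \cD(\rho_t)$ (which follows since $\cD$ is a supremum of continuous functionals of $\rho_t$, hence lower semicontinuous, hence Borel).
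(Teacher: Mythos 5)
The paper never records a proof of Lemma~\ref{lemma: variational rep} — it only asserts that the identity ``may be proven with some elementary Hilbert space theory'' — so the comparison is to that hint, and your duality/Riesz plan for the infinite case is indeed in its spirit. However, your write-up has a concrete algebraic error at its centre. The ``corrected'' completion of the square you adopt, $2w\nabla w\cdot\nabla H - w^2|\nabla H|^2 = \tfrac14|\nabla w|^2-\bigl|\tfrac12\nabla w - w\nabla H\bigr|^2$, is false: expanding the right-hand side gives $w\nabla w\cdot\nabla H - w^2|\nabla H|^2$, so the cross term is off by a factor of $2$. Your first computation was the correct one: $2w\nabla w\cdot\nabla H - w^2|\nabla H|^2 = |\nabla w|^2 - |\nabla w - w\nabla H|^2$, the formal optimiser is $\nabla H=\nabla\log w=\nabla\varphi(v)/(2\varphi(v))$, and the supremum in \eqref{eq: variational rep 1} equals $\|\nabla\varphi^{1/2}(v)\|_{L^2}^2=\cD(v)$, not $\tfrac14\cD(v)$. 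The factor $\tfrac14$ in the statement appears to be a normalisation slip in the paper (harmless for its only use, in Lemma~\ref{lemma: finite entropy dissipation}, where only the equivalence of finiteness matters); by manufacturing an identity to match it, you end up arguing for a statement that is not true, so as written your proof establishes neither the constant-$1$ nor the constant-$\tfrac14$ version.

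Two further points. First, for the direction ``$\sup<\infty\Rightarrow\cD(v)<\infty$'' there is a cleaner route than your contradiction sketch, and it simultaneously yields the sharp lower bound: if the supremum equals $S<\infty$, replace $H$ by $\lambda H$ and optimise in $\lambda$ to get $\bigl|\int\varphi(v)\Delta H\bigr|\le 2\sqrt{S}\,\|\varphi^{1/2}(v)\nabla H\|_{L^2}$ for all $H\in C^2_c$; by Riesz representation on the $L^2$-closure of $\{\varphi^{1/2}(v)\nabla H\}$ there is $F\in (L^2)^d$ with $\|F\|\le 2\sqrt S$ and $\nabla\varphi(v)=\varphi^{1/2}(v)F$ distributionally, and the regularisation $w_\epsilon=\sqrt{\varphi(v)+\epsilon^2}-\epsilon$ (which avoids the chain-rule issue on $\{v=0\}$) gives $\cD(v)\le\tfrac14\|F\|^2\le S$. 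Combined with the completion-of-the-square upper bound this proves the equality with constant $1$ and handles the infinite case, with no need for your truncated-logarithm construction. Second, that construction, as you sketch it, has a genuine gap: after truncating the logarithm and multiplying by a spatial cutoff $\chi_R$, the error term $\int\varphi(v)\,\psi_\delta(v)^2|\nabla\chi_R|^2$ need not vanish under the sole assumption $v\in L^1_{{\rm loc},+}$, since the mass of $\varphi(v)$ on annuli can grow arbitrarily fast in $R$; ``dominated convergence'' does not settle it, and one would need to exploit $\cD(v)<\infty$ through Sobolev-type control at infinity (as in the cutoff arguments of Theorem~\ref{thm_rel_unique}) — or simply use the Riesz argument above, which makes the construction unnecessary. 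The deduction of \eqref{eq: variational rep 2} by working pointwise in time, using measurability of $t\mapsto\cD(\rho_t)$ and a countable family of space-time test functions, is fine in outline.
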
  \begin{proof}[Proof of Lemma \ref{lemma: finite entropy dissipation}]
		Fix $z$. Thanks to Lemma \ref{lemma: variational rep}, we may choose $H\in C^\infty_c([0,T]\times \mathbb{R}^d)$ such that \begin{equation}
			\int_0^T \int_{\mathbb{R}^d} \varphi(\rho_s(x))(-\Delta H(s,x)-|\nabla H(s,x)|^2)ds dx>z+3.
		\end{equation} With this choice of $H$ fixed, we set $G:=-\Delta H-|\nabla H|^2$ and may use Theorem \ref{thrm: supex} to find $\epsilon>0$ such that \begin{equation} \label{eq: choose e 1}
			\limsup_{N\to \infty} N^{-d}\log \mathbb{P}\Big(\Big|\int_0^T V(G,\lambda,N,\epsilon)(t, \eta_{N^2t})ds\Big|>1\Big)\le -z
		\end{equation} and so that, making $\epsilon>0$ smaller if necessary, \begin{equation} \label{eq: choose e 2}
			\int_0^T \int_{\mathbb{R}^d} \varphi(\rho^\epsilon_s(x))G(s,x) ds dx >z+2
		\end{equation} where $\rho^\epsilon$ denotes the convolution in space with $\alpha_\epsilon:=(2\epsilon)^{-d}1_{[-\epsilon, \epsilon)^d}$.  We now take $\mathcal{U}$ to be the set \begin{equation}
			\mathcal{U}:=\Big\{v\in \mathbb{D}: \int_0^T \int_{\mathbb{R}^d}\varphi(v^\epsilon_s(x))G(s,x)ds dx>z+2\Big\}
		\end{equation} which contains $\rho$ by the choice of $\epsilon$ in (\ref{eq: choose e 2}), and can readily seen to be open in the topology of $\mathbb{D}$. For $\beta>0$ to be chosen later and $M$ large enough that $\text{supp}(H)\subset \subset [-M,M]^d\times[0,T]$, we use the the definition of $\varphi$ to divide the event $\{\pi^N \in \mathcal{U}\}$ into cases: {\bf{either}} the event in (\ref{eq: choose e 1}) occurs, {\bf or} \begin{equation}
			\frac\beta{N^d}\int_0^T \sum_{|x|_\infty \le MN} w(\eta_{N^2t}(x))dt>1
		\end{equation} {\bf or} \begin{equation}\label{eq: bad entropy event} \frac{1}{N^d}\int_0^T \sum_{x\in \mathbb{Z}^d} \Big\{\lambda(\eta_{N^2t}(x))\Big(-\Delta H\big(t,\frac{x}N\big)-\Big|\nabla H\big(t,\frac{x}{N}\big)\Big|^2\Big)-\beta w(\eta_{N^2t}(x))1_{|x|_\infty\le MN}\Big\} dt >z.
			\end{equation} We now {\bf{claim}} that, for any $\beta>0$
\begin{align} \label{eq: claim that}
\nonumber & \limsup_N \frac{1}{N^{d}} \log \mathbb{E}\exp-\int_0^T \sum_{x\in \mathbb{Z}^d} \lambda(\eta_{N^2t}(x))\big(\Delta H\big(t,\frac{x}N\big)+|\nabla H\big(t,\frac{x}{N}\big)|^2\big)+\beta w(\eta_{N^2t}(x))1_{|x|_\infty\le MN}
\\ &  \le 0,
\end{align}
which implies a bound of order $e^{-zN^d}$ on the final event. The proof concludes by choosing $\beta>0$ small enough that the event in (\ref{eq: bad entropy event}) has probability $e^{-zN^d}$ as in the proof of Theorem \ref{thrm: supex}. \bigskip \\ We now prove the claim. First, we note that there exists $a<\infty$ such that the whole sum is negative whenever $\eta\in X$ is any configuration with $N^{-d}\sum_{x\in B_{MN}}\eta(x)>a$. Whenever the reverse inequality holds, we may use the smoothness of $H$ to replace both the Laplacian and the gradient by the discrete Laplacian and discrete gradient, plus an error which vanishes, uniformly in $\eta$, as $N\to \infty$. Using the Feynman-Kac formula, we may therefore bound the left-hand side, up to an error vanishing in $N$, by \begin{equation}\begin{split} \label{eq: introduce FK}
			\sup_f \sup_{t\le T}& \Big\{\mathbb{E}_{f\nu_\rho}\Big[N^{2-d} \sum_{x\sim y} \Xi(H(t,\cdot), N,\eta, x,y)-N^{2-d}\mathfrak{d}(f)\Big]\Big\}\end{split}\end{equation} where, as before, the outermost supremum runs over probability density functions $f$ with respect to $\nu_\rho$, the sum now runs over all unordered pairs of neighbours, and \begin{equation}\begin{split} \Xi(H,N,\eta,x,y):&=(\lambda(\eta(y)-\lambda(\eta(x))\left(H\left(\frac{y}{N}\right)-H\left(\frac{x}{N}\right)\right)\\&\hspace{1cm}-\frac12(\lambda(\eta(y)+\lambda(\eta(x))\left(H\left(\frac{y}{N}\right)-H\left(\frac{x}{N}\right)\right)^2. \end{split}
			\end{equation} Moving factors involving $H$ outside the expectation, for any $f$, \begin{equation}
				\begin{split}
					\mathbb{E}_{f\nu_\rho}[\Xi(H,N,\eta,x,y)]&=\mathbb{E}_{f\nu_\rho}\left[\lambda(\eta(y)-\lambda(\eta(x)\right]\left(H\left(\frac{y}{N}\right)-H\left(\frac{x}{N}\right)\right)\\& \hspace{1cm} -\frac12\mathbb{E}_{f\nu_\rho}\left[\lambda(\eta(y)+\lambda(\eta(x)\right]\left(H\left(\frac{y}{N}\right)-H\left(\frac{x}{N}\right)\right)^2 \\[1ex] & \le \frac{\left(\mathbb{E}_{f\nu_\rho}\left[\lambda(\eta(y))-\lambda(\eta(x))\right]\right)^2}{2\mathbb{E}_{f\nu_\rho}\left[\lambda(\eta(y))+\lambda(\eta(x))\right]}.
				\end{split}
			\end{equation} Using the change-of-variables formula (\ref{eq: COV}) in both numerator and denominator and using Cauchy-Schwarz in the probability space, the final expression is at most \begin{equation}\begin{split}
				\varphi(\rho)\frac{(\mathbb{E}_{\nu_\rho}[f(\eta+1_x)-f(\eta+1_y)])^2}{2\mathbb{E}_{\nu_\rho}[f(\eta+1_x)+f(\eta+1_y)]} &\le \varphi(\rho)\mathbb{E}_{\nu_\rho}\Big[\Big(\sqrt{f(\eta+1_x)}-\sqrt{f(\eta+1_y)}\Big)^2\Big] \\ &=\mathbb{E}_{\nu_\rho}\Big[\lambda(\eta(x))\Big(\sqrt{f(\eta+1_y-1_x)}-\sqrt{f(\eta)}\Big)^2\Big].
			\end{split}\end{equation}The final expression is the contribution to $\mathfrak{d}(f)$ from the edge $\{x,y\}$, and in particular, the supremum in (\ref{eq: introduce FK}) is nonpositive. The proof of the claim is complete, and hence so is the lemma.\end{proof}

\section{Weak Solutions of the Skeleton Equation}\label{sec_rel_skel_exist} {In order to develop a theory of the skeleton equation to close the large deviation principle in Theorems \ref{thrm: main result} - \ref{thrm: partial LDP}, we must consider initial data that fall outside the global $L^p$-framework, for any $p\in[1,\infty)$. Indeed, in \emph{any} limit obtained from large deviations in equilibrium, the total mass of $\rho$ will be infinite for all times. It is for this reason that we instead} develop a solution theory based on the following notion of relative entropy with respect to a constant fixed density $\gamma\in(0,\infty)${, which appears as the static large deviations rate function and a priori remains finite on fluctuations}.  Precisely, for every $\gamma\in(0,\infty)$, we let $\Psi_{\Phi,\gamma}$ be the unique function satisfying
\[\Psi_{\Phi,\gamma}(\gamma)=0\;\;\textrm{and}\;\;\Psi_{\Phi,\gamma}'(\xi) = \log\Big(\frac{\Phi(\xi)}{\Phi(\gamma)}\Big).\]
It follows from the definition that {$\Psi_{\Phi, \gamma}$ is convex} and $\Psi_{\Phi,\gamma}(\xi)\geq 0$ for every $\xi\in[0,\infty)$ with $\Phi(\xi)=0$ if and only if $\xi=\gamma$ whenever $\Phi$ is strictly increasing.  In the case that $\Phi(\xi)=\xi$ we have that
%
\[\Psi_{\Phi,\gamma}(\xi) = \xi\log\Big(\frac{\Phi(\xi)}{\Phi(\gamma)}\Big) - (\xi-\gamma),\]
which defines the relative entropy with respect to the constant density $\gamma$.  The corresponding relative entropy space is then
\[\Ent_{\Phi,\gamma} =\Big\{\rho\colon\R^d\rightarrow\R\;\;\textrm{nonnegative and measurable with}\;\;\int_{\R^d}\Psi_{\Phi,\gamma}(\rho)<\infty\Big\}.\]
 If we define $\tilde{\Psi}_{\Phi,\gamma}(\xi) = \Psi_{\Phi,\gamma}(\xi)$ if $\xi\geq \gamma$ and $\tilde{\Psi}_{\Phi,\gamma}(\xi) = -\Psi_{\Phi,\gamma}(\xi)$ if $\xi\in[0,\gamma]$, it is straightforward to check that $\Ent_{\Phi,\gamma}$ is a complete, separable metric space with respect to the metric

 \[d(f,g) = \int_{\R^d}\abs{\tilde{\Psi}_{\Phi,\gamma}(f)-\tilde{\Psi}_{\Phi,\gamma}(g)},\]
 where the completeness and separability are respectively consequences of the fact that $\tilde{\Psi}_{\Phi,\gamma}$ is strictly increasing and convex.  We will similarly define the following $L^p$-spaces shifted by $\gamma$,
 \[L^p_\gamma(\R^d) = \Big\{\rho\colon\R^d\rightarrow\R\;\;\textrm{nonnegative and measureable}\;\;\textrm{with}\;\;\int_{\R^d}\abs{\rho-\gamma}^p<\infty\Big\}.\]
Observe that the spaces $L^p_\gamma$ are canonically isomorphic to the space of $L^p$ functions bounded from below by $-\gamma$, and that $L^2_\gamma$ comes equipped with the induced inner product and norm.

We will first prove the existence of suitable weak solutions to the regularised equation
\begin{equation}\label{rel_1}\partial_t\rho = \Delta\Phi(\rho) +\eta \Delta \rho - \nabla\cdot (\sigma(\rho)g)\;\;\textrm{in}\;\;\R^d\times(0,T)\;\;\textrm{with}\;\;\rho(x,0)=\rho_0(x),\end{equation}
for $\sigma$ a smooth and bounded approximation of $\Phi^\frac{1}{2}$.  Note here that we do not expect to see preservation of the $L^1_\gamma$-norm due to the fact that, unless $g=0$, the constant density $\gamma$ is not a solution.  We similarly define $H^1_\gamma(\R^d)$ to be the space of $L^2_\gamma(\R^d)$-functions with $L^2(\R^d)$-integrable weak gradients.

\begin{assumption}\label{ap_assume}  Let $\Phi,\sigma \in \C([0,\infty)\cap \C^1_{\textrm{loc}}((0,\infty))$ satisfy the following assumptions.  
\begin{enumerate}
\item \emph{Vanishing at zero}:  we have that $\Phi(0)=0$ and $\sigma(0)=0$ and that
\[\Phi^\frac{1}{2}(\xi)\log(\Phi(\xi))\;\;\textrm{is locally bounded on $[0,\infty)$.}\]
\item \emph{Strictly increasing}:  we have that $\Phi'(\xi)>0$ for every $\xi\in(0,\infty)$.
\item \emph{Local integrability}:  we have that
\[\log(\Phi(\xi))\;\;\textrm{and}\;\;\Phi'(\xi)\log(\Phi(\xi)),\]
are locally integrable on $[0,\infty)$.
\item \emph{Growth at zero and infinity}:  there exists $m\in[1,\infty)$ and $c\in(0,\infty)$ such that
\[\Phi(\xi)\leq c(\xi+\xi^m)\;\;\textrm{and}\;\;\abs{\sigma(\xi)}\leq c\Phi^\frac{1}{2}(\xi)\;\;\textrm{for every}\;\;\xi\in[0,\infty).\]
\item Assume either that there exists $c\in(0,\infty)$ and $\theta\in[0,\nicefrac{1}{2}]$ such that
\begin{equation}\label{5_00} \frac{\Phi'(\xi)}{\Phi^\frac{1}{2}(\xi)}\leq c\xi^{-\theta}\;\;\textrm{for every}\;\;\xi\in(0,\infty),\end{equation}
or that there exists $c\in(0,\infty)$ and $q\in[1,\infty)$ such that
\begin{equation}\label{5_0}\abs{\xi-\xi'}^q\leq c\abs{\Phi^\frac{1}{2}(\xi)-\Phi^\frac{1}{2}(\xi')}^2\;\;\textrm{for every}\;\;\xi,\xi'\in[0,\infty).\end{equation}
\end{enumerate}
\end{assumption}

 \begin{definition}\label{def_sol_re}  Let $\gamma\in(0,\infty)$, let $\eta\in[0,1)$, let $\rho_0\in \Ent_{\Phi,\gamma}(\R^d)$, and let $g\in(L^2_{t,x})^d$.  A solution of \eqref{rel_1} is a continuous $L^1_{\textrm{loc}}(\R^d)$-valued function $\rho$ that satisfies the following two properties.
\begin{enumerate}
\item{The relative entropy estimate}:  we have that
\[\int_0^T\int_{\R^d}\abs{\nabla\Phi^\frac{1}{2}(\rho)}^2<\infty.\]
\item{The equation}:   for every $\psi\in\C^\infty_c(\R^d)$ and $t\in[0,T]$,

\begin{align*}
\int_{\R^d}\rho(x,t)\psi(x,t) &  = \int_{\R^d}\rho_0(x)\psi(x) -\int_0^t\int_{\R^d}\nabla\Phi(\rho)\cdot\nabla\psi
\\ & \quad -\eta\int_0^t\int_{\R^d}\nabla\rho\cdot\nabla\psi+\int_0^t\int_{\R^d}\sigma(\rho)g\cdot\nabla\psi.
\end{align*}

\end{enumerate}
\end{definition}

\begin{remark} In Definition~\ref{def_sol_re}, it is a consequence of the identity $\nabla\phi = 2\Phi^\frac{1}{2}\nabla\Phi^\frac{1}{2}$ and the interpolation estimate of Proposition~\ref{prop_rel_int} below that $\nabla\Phi^\frac{1}{2}$ is $L^1$-integrable in space.  The local $L^1$-integrability of $\nabla\rho$ is only used for approximating solutions, and follows from the energy estimate of Proposition~\ref{rel_prop0} below for some $\eta\in(0,\infty)$, for a bounded nonlinearity $\sigma$.   \end{remark}

\begin{proposition}\label{rel_prop0}  Let $\eta\in(0,1)$, let $\gamma\in(0,\infty)$, let $\rho_0\in \Ent_{\Phi,\gamma}(\R^d)\cap L^2_{\gamma}(\R^d)$, and let $g\in (L^2_{t,x})^d$.  Under Assumption~\ref{ap_assume} and the additional conditions that $\sigma$ and $\Phi'$ are  bounded and Lipschitz continuous and that $\abs{\sigma(\xi)}\leq c\Phi^\frac{1}{2}(\xi)$ for some $c\in(0,\infty)$, there exists a weak solution $\rho\in L^2_tH^1_{\gamma,x}$ of \eqref{rel_1}.  Furthermore, the solution satisfies the following three estimates.
\begin{enumerate}
\item{The energy estimate}:   for $\Theta_\Phi(\xi) = \int_0^\xi (\Phi'(\xi'))^\frac{1}{2}\dxip$, for some $c\in(0,\infty)$,
\[\sup\nolimits_{t\in[0,T]}\norm{\rho(\cdot,t)}^2_{L^2_{\gamma,x}}+\norm{\nabla\Theta_\Phi(\rho)}^2_{L^2_{t,x}}+\eta\norm{\nabla\rho}^2_{L^2_{t,x}} \leq \norm{\rho_0}^2_{L^2_{\gamma,x}}+c\norm{\sigma}^2_{L^\infty}\norm{g}^2_{L^2_{t,x}}.\]
\item{Regularity in time}:  for some $c\in(0,\infty)$,
\[\norm{\rho}_{H^1_tH^{-1}_{\gamma,x}}\leq \norm{\rho_0}_{L^2_{\gamma,x}}+(\eta+\norm{\Phi'}_{L^\infty})\norm{\nabla\rho}_{L^2_{t,x}}+\norm{\sigma}_{L^\infty}\norm{g}_{L^2_{t,x}}.\]
\item{The relative entropy estimate}:  for some $c\in(0,\infty)$,
\[\sup_{t\in[0,T]}\int_{\R^d}\Psi_{\Phi,\gamma}(\rho(x,t))+\int_0^T\int_{\R^d}\abs{\nabla\Phi^\frac{1}{2}(\rho)}^2\leq c\Big(\int_{\R^d}\Psi_{\Phi,\gamma}(\rho_0)+\int_0^T\int_{\R^d}\abs{g}^2\Big).\]
\end{enumerate}

\end{proposition}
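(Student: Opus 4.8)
\textbf{Proof plan for Proposition~\ref{rel_prop0}.}

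The plan is to construct the solution by a standard parabolic approximation--compactness argument, using the nondegenerate viscosity $\eta\Delta\rho$ to gain genuine $H^1$-regularity, and then to pass all three estimates to the limit. First I would further regularise: replace $\R^d$ by a large ball $B_R$ with, say, no-flux (Neumann) boundary conditions, and mollify the initial data so that $\rho_0^R\in H^1_\gamma(B_R)\cap\Ent_{\Phi,\gamma}$ with norms controlled by those of $\rho_0$. On $B_R$ the operator $u\mapsto \Delta(\Phi(u)+\eta u)-\nabla\cdot(\sigma(u)g)$ is a bounded, uniformly parabolic quasilinear operator (recall $\Phi'$ and $\sigma$ are bounded and Lipschitz, so $\Phi+\eta\,\mathrm{id}$ has derivative in $[\eta,\eta+\|\Phi'\|_{L^\infty}]$), and classical theory (Galerkin in $H^1_0(B_R)$, or the theory of monotone operators with the time-dependent forcing $\nabla\cdot(\sigma(\rho)g)$ handled as a lower-order perturbation) gives a weak solution $\rho^R\in L^2_tH^1_{\gamma,x}(B_R)\cap H^1_tH^{-1}_{\gamma,x}(B_R)$.

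The next step is the a priori estimates, carried out first on $\rho^R$, which are exactly the three displayed bounds. For the energy estimate I would test the equation with $(\rho^R-\gamma)$; the term $\int\Delta\Phi(\rho^R)(\rho^R-\gamma)=-\int\Phi'(\rho^R)|\nabla\rho^R|^2=-\|\nabla\Theta_\Phi(\rho^R)\|^2_{L^2}$ using $\Theta_\Phi'=(\Phi')^{1/2}$, the viscous term gives $-\eta\|\nabla\rho^R\|^2_{L^2}$, and the forcing is bounded via Cauchy--Schwarz and Young by $\tfrac14\|\nabla\Theta_\Phi(\rho^R)\|^2_{L^2}$ absorbed on the left plus $c\|\sigma\|^2_{L^\infty}\|g\|^2_{L^2_{t,x}}$ --- here one needs $|\sigma(\xi)|\le c\Phi^{1/2}(\xi)$ and $(\Phi')^{1/2}=\Theta_\Phi'$ to rewrite $\sigma(\rho^R)\nabla\rho^R$ in terms of $\nabla\Theta_\Phi(\rho^R)$; alternatively one simply bounds $\int\sigma(\rho^R)g\cdot\nabla(\rho^R-\gamma)$ by $\eta$-weighted Young against $\eta\|\nabla\rho^R\|^2_{L^2}$, which also closes. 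The time-regularity estimate is then immediate by reading off $\partial_t\rho^R=\Delta(\Phi(\rho^R)+\eta\rho^R)-\nabla\cdot(\sigma(\rho^R)g)$ in $H^{-1}_{\gamma,x}$ and bounding each term by the already-controlled $\|\nabla\rho^R\|_{L^2_{t,x}}$ and $\|g\|_{L^2_{t,x}}$. For the relative entropy estimate I would (formally) test with $\Psi_{\Phi,\gamma}'(\rho^R)=\log(\Phi(\rho^R)/\Phi(\gamma))$: the diffusion term produces $-\int\Phi'(\rho^R)\tfrac{\Phi'(\rho^R)}{\Phi(\rho^R)}|\nabla\rho^R|^2 = -4\int|\nabla\Phi^{1/2}(\rho^R)|^2$ since $\nabla\Phi^{1/2}=\tfrac{\Phi'}{2\Phi^{1/2}}\nabla\rho$, the viscous term contributes a nonnegative $-\eta\int\tfrac{\Phi'(\rho^R)}{\Phi(\rho^R)}|\nabla\rho^R|^2$ which we discard, and the forcing $\int\sigma(\rho^R)g\cdot\nabla\Psi_{\Phi,\gamma}'(\rho^R)=\int\sigma(\rho^R)\tfrac{\Phi'(\rho^R)}{\Phi(\rho^R)}g\cdot\nabla\rho^R = 2\int\tfrac{\sigma(\rho^R)}{\Phi^{1/2}(\rho^R)}g\cdot\nabla\Phi^{1/2}(\rho^R)$ is bounded, using $|\sigma|\le c\Phi^{1/2}$, by $2\int|\nabla\Phi^{1/2}(\rho^R)|^2$ absorbed on the left plus $c\int|g|^2$; the time-boundary term is $\int\Psi_{\Phi,\gamma}(\rho^R(\cdot,t))-\int\Psi_{\Phi,\gamma}(\rho_0^R)$. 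Local integrability of $\log\Phi$ and $\Phi'\log\Phi$ (Assumption~\ref{ap_assume}(3)) together with the local boundedness of $\Phi^{1/2}\log\Phi$ (Assumption~\ref{ap_assume}(1)) is what makes the test function $\Psi_{\Phi,\gamma}'(\rho^R)$ admissible and the entropy finite; to make this rigorous one tests against a cutoff/truncation of $\Psi_{\Phi,\gamma}'$ and passes to the limit monotonically.

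Finally I would extract the solution. From the uniform bounds, along a subsequence $\rho^R\rightharpoonup\rho$ weakly in $L^2_tH^1_{\gamma,\mathrm{loc}}$, $\partial_t\rho^R\rightharpoonup\partial_t\rho$ in $H^1_tH^{-1}_{\gamma,\mathrm{loc}}$, and by Aubin--Lions $\rho^R\to\rho$ strongly in $L^2_{t,\mathrm{loc},x}$ and a.e.; strong local convergence plus continuity of $\Phi,\sigma,\Phi'$ and the uniform bound $\|\nabla\Theta_\Phi(\rho^R)\|_{L^2}\le C$ let me pass to the limit in all the nonlinear terms of the weak formulation (the gradient terms via weak convergence $\nabla\Theta_\Phi(\rho^R)\rightharpoonup\nabla\Theta_\Phi(\rho)$, identified using the a.e.\ convergence, and likewise $\nabla\Phi^{1/2}(\rho^R)\rightharpoonup\nabla\Phi^{1/2}(\rho)$). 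All three estimates survive the limit by weak lower semicontinuity of the norms and Fatou for the entropy integral. Continuity of $\rho$ as an $L^1_{\mathrm{loc}}$-valued function follows from $\rho\in L^2_tH^1_{\gamma,\mathrm{loc}}\cap H^1_tH^{-1}_{\gamma,\mathrm{loc}}\hookrightarrow C([0,T];L^2_{\gamma,\mathrm{loc}})$. The alternative hypothesis \eqref{5_00} versus \eqref{5_0} in Assumption~\ref{ap_assume}(5) is not needed for this existence/energy statement itself --- it will be used later for uniqueness and the interpolation of Proposition~\ref{prop_rel_int} --- so I would not invoke it here.

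The main obstacle is the rigorous justification of the relative-entropy estimate: $\Psi_{\Phi,\gamma}'=\log(\Phi/\Phi(\gamma))$ is singular (it blows up as $\rho\downarrow0$ since $\Phi(0)=0$), so it cannot be used directly as a test function, and one must approximate it by truncation, control the error terms near $\{\rho\approx 0\}$ using precisely the local-integrability hypotheses in Assumption~\ref{ap_assume}(1),(3), and verify that the viscous contribution $-\eta\int\tfrac{\Phi'(\rho^R)}{\Phi(\rho^R)}|\nabla\rho^R|^2$ --- which is only known to be finite, not uniformly bounded in $\eta$ --- can legitimately be dropped (it has the right sign) rather than needing to be estimated. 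Getting a clean $\eta$-independent entropy bound out of this, so that it is usable when later sending $\eta\downarrow 0$, is the delicate point.
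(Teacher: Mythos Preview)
Your proposal is correct and follows essentially the same route as the paper: Galerkin approximation, the three a priori estimates obtained by testing respectively with $\rho-\gamma$, reading $\partial_t\rho$ in $H^{-1}_{\gamma,x}$, and $\Psi_{\Phi,\gamma}'(\rho)=\log(\Phi(\rho)/\Phi(\gamma))$, and compactness via Aubin--Lions--Simon on balls $B_R$. Your write-up is in fact more detailed than the paper's, which dispatches the whole argument in a short paragraph; in particular you correctly flag the singularity of $\Psi_{\Phi,\gamma}'$ at $\rho=0$ and the need for a truncation (the paper simply invokes ``a straightforward approximation argument by convolution''), and you are right that Assumption~\ref{ap_assume}(5) plays no role here.
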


\begin{proof}  The energy estimate is obtained by testing the equation with $\rho$, after introducing a standard regularization by convolution, and the regularity in time is a direct consequence of the equation.   For the final estimate, it follows from a straightforward approximation argument by convolution using the definition of $\Psi_{\Phi,\gamma}$ that, for every $t\in[0,T]$,
\begin{align*}
& \int_{\R^d}\Psi_{\Phi,\gamma}(\rho(x,t))+\eta\int_0^t\int_{\R^d}\frac{\Phi'(\rho)}{\Phi(\rho)}\abs{\nabla\rho}^2+\int_0^t\int_{\R^d}\frac{\Phi'(\rho)^2}{\Phi(\rho)}\abs{\nabla\rho}^2
\\ & = \int_{\R^d}\Psi_{\Phi,\gamma}(\rho_0)+\int_0^t\int_{\R^d}\frac{\sigma(\rho)\Phi'(\rho)}{\Phi(\rho)}g\cdot\nabla\rho,
\end{align*}
and therefore, using H\"older's inequality, Young's inequality, and the assumption that $\sigma(\rho)\leq c\Phi^\frac{1}{2}(\rho)$, we have for some $c\in(0,\infty)$ that
\begin{align*}
& \int_{\R^d}\Psi_{\Phi,\gamma}(\rho(x,t))+\eta\int_0^t\int_{\R^d}\frac{\Phi'(\rho)}{\Phi(\rho)}\abs{\nabla\rho}^2+\int_0^t\int_{\R^d}\frac{\Phi'(\rho)^2}{\Phi(\rho)}\abs{\nabla\rho}^2
\\ & \leq c\Big(\int_{\R^d}\Psi_{\Phi,\gamma}(\rho_0)+\int_0^t\int_{\R^d}\abs{g}^2\Big),
\end{align*}
which completes the proof of the final estimate.  The proof of existence then follows from a Galerkin approximation, the energy estimate above, the boundedness of $\sigma$, the compact embedding of $H^1(B_R)$ into $L^2(B_R)$ and the continuous embedding of $L^1(B_R)$ into $H^{-1}(B_R)$, for every $R\in(0,\infty)$, and the Aubin--Lions--Simon lemma \cite{Aubin,pLions,Simon}.  \end{proof}

\begin{proposition}\label{prop_rel_int}  Let $\Phi$ satisfy Assumption~\ref{ap_assume}, let $d\geq 3$, and let $\frac{1}{2_*}=\frac{1}{2}-\frac{1}{d}$.  Furthermore, let $\gamma\in(0,\infty)$, let $\Phi\in\C^1_{\textrm{loc}}((0,\infty))\cap\C([0,\infty))$ satisfy Assumption~\ref{ap_assume}, and let $\rho$ be a nonnegative measurable function satisfying
\[\sup_{t\in[0,T]}\int_{\R^d}\Psi_{\Phi,\gamma}(\rho)+\int_0^T\int_{\R^d}\abs{\nabla\Phi^\frac{1}{2}(\rho)}^2<\infty.\]
Then, for some $c\in(0,\infty)$,
\[\int_0^T\Big(\int_{\R^d}(\Phi^\frac{1}{2}(\rho)-\Phi^\frac{1}{2}(\gamma))^{2_*}\Big)^\frac{2}{2_*}\leq c\int_0^T\int_{\R^d}\abs{\nabla\Phi^\frac{1}{2}(\rho)}^2.\]
\end{proposition}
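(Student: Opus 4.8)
\textbf{Proof proposal for Proposition~\ref{prop_rel_int}.}

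The plan is to reduce the claimed estimate to the Sobolev inequality $\|v-v_\infty\|_{L^{2_*}(\R^d)}\le C\|\nabla v\|_{L^2(\R^d)}$ applied to $v=\Phi^{1/2}(\rho_t)$ at (almost) every fixed time $t\in[0,T]$, and then integrate in time. The only subtlety is that the time-uniform bound on $\int_{\R^d}\Psi_{\Phi,\gamma}(\rho)$ does not immediately tell us that $\Phi^{1/2}(\rho_t)-\Phi^{1/2}(\gamma)\in L^{2_*}(\R^d)$ a priori; we need an \emph{a priori} membership of this function in some Lebesgue space in order to legitimately apply Sobolev. So the first step is to establish, at a.e.\ fixed time, that $\Phi^{1/2}(\rho_t)-\Phi^{1/2}(\gamma)$ lies in $L^{2_*}_{\mathrm{loc}}(\R^d)$ and has $L^2(\R^d)$ gradient, which is enough to run the classical Sobolev argument by truncation/exhaustion.

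More concretely, first I would fix $t$ outside a null set so that $\rho_t$ is measurable, $\int_{\R^d}\Psi_{\Phi,\gamma}(\rho_t)<\infty$, and $\int_{\R^d}|\nabla\Phi^{1/2}(\rho_t)|^2<\infty$ (the last holds for a.e.\ $t$ by Fubini from the hypothesis). Write $w:=\Phi^{1/2}(\rho_t)-\Phi^{1/2}(\gamma)$, which has a weak gradient $\nabla w=\nabla\Phi^{1/2}(\rho_t)\in L^2(\R^d)$. To see $w$ is a legitimate candidate for the Sobolev--Gagliardo--Nirenberg inequality, I would split according to whether $\rho_t$ is close to or far from $\gamma$. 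On the set $\{|\rho_t-\gamma|\le 1\}$, local boundedness of $\Phi'$ near $\gamma$ gives $|w|\le C\min(|\rho_t-\gamma|,1)\le C\sqrt{\Psi_{\Phi,\gamma}(\rho_t)^{1/2}}$-type control — more cleanly, convexity of $\Psi_{\Phi,\gamma}$ with $\Psi_{\Phi,\gamma}(\gamma)=0$ gives a quadratic lower bound $\Psi_{\Phi,\gamma}(\xi)\ge c(\xi-\gamma)^2$ near $\gamma$, hence $w\in L^2$ on that set. On the set $\{|\rho_t-\gamma|>1\}$, Assumption~\ref{ap_assume}(1) (the local boundedness of $\Phi^{1/2}(\xi)\log\Phi(\xi)$, or directly the superlinearity of $\Psi_{\Phi,\gamma}$ relative to $\Phi^{1/2}$, which one reads off from $\Psi_{\Phi,\gamma}'=\log(\Phi/\Phi(\gamma))$ and $(\Phi^{1/2})'=\tfrac12\Phi'/\Phi^{1/2}$) yields $|w|^2\le C(1+\Psi_{\Phi,\gamma}(\rho_t))$, so that $w\in L^2$ on that set too; and simultaneously this set has finite measure since $\Psi_{\Phi,\gamma}(\rho_t)\ge c>0$ there. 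Thus $w\in L^2(\R^d)$ with $\nabla w\in L^2(\R^d)$, i.e.\ $w\in H^1(\R^d)$, which is certainly enough to apply the Sobolev inequality in dimension $d\ge3$:
\[
\Big(\int_{\R^d}|w|^{2_*}\Big)^{2/2_*}\le C\int_{\R^d}|\nabla w|^2
= C\int_{\R^d}\big|\nabla\Phi^{1/2}(\rho_t)\big|^2 .
\]
(Strictly, to avoid even needing $w\in L^2$ globally one can instead truncate $w$ at heights $\pm k$ and use an exhaustion by balls, applying the classical Sobolev inequality to $\chi_{B_R}(w\wedge k)\vee(-k)$ and passing to the limit by monotone/dominated convergence; the measure-finiteness of $\{|w|>\lambda\}$ for each $\lambda>0$, from the time-uniform entropy bound, makes this limiting argument go through. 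I would present whichever of the two routes is shortest.)

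Finally, integrating the pointwise-in-time Sobolev estimate over $t\in[0,T]$ gives
\[
\int_0^T\Big(\int_{\R^d}\big(\Phi^{1/2}(\rho)-\Phi^{1/2}(\gamma)\big)^{2_*}\Big)^{2/2_*}\,dt
\le C\int_0^T\int_{\R^d}\big|\nabla\Phi^{1/2}(\rho)\big|^2,
\]
which is the claim. I expect the main (only real) obstacle to be the first step: cleanly justifying that $\Phi^{1/2}(\rho_t)-\Phi^{1/2}(\gamma)$ is an admissible function for the Sobolev inequality using only the structural Assumption~\ref{ap_assume} — in particular extracting the two-sided comparison between $\Psi_{\Phi,\gamma}$ and $(\Phi^{1/2}-\Phi^{1/2}(\gamma))^2$ near $\gamma$ and the superlinear domination far from $\gamma$. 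Once that bookkeeping is done, the rest is the standard Sobolev inequality plus Fubini.
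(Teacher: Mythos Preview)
Your overall strategy---apply the critical Sobolev inequality to $w=\Phi^{1/2}(\rho_t)-\Phi^{1/2}(\gamma)$ at a.e.\ fixed time and integrate---is exactly the paper's approach. However, your primary route (showing $w\in H^1(\R^d)$) has a gap: the pointwise bound $|w|^2\le C(1+\Psi_{\Phi,\gamma}(\rho_t))$ on $\{|\rho_t-\gamma|>1\}$ is not true in the generality of Assumption~\ref{ap_assume}. Part~(4) allows $\Phi(\xi)\le c(\xi+\xi^m)$ with any $m\ge1$; if e.g.\ $\Phi(\xi)\sim\xi^m$ for large $\xi$ with $m>1$, then $|w|^2\sim\Phi(\xi)\sim\xi^m$ while $\Psi_{\Phi,\gamma}(\xi)=\int_\gamma^\xi\log(\Phi/\Phi(\gamma))\sim m\,\xi\log\xi$, and the claimed domination fails. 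Neither Assumption~\ref{ap_assume}(1) (a local statement near zero) nor the derivative comparison you sketch recovers it.

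Your parenthetical route, by contrast, is correct and is all that is needed. The entropy bound gives $|\{|w|>\lambda\}|<\infty$ for every $\lambda>0$: indeed $|w|>\lambda$ forces $\rho_t$ outside a neighbourhood of $\gamma$ on which $\Psi_{\Phi,\gamma}\ge c_\lambda>0$ (by strict monotonicity of $\Phi$ and strict positivity of $\Psi_{\Phi,\gamma}$ away from $\gamma$), and Chebyshev against $\int\Psi_{\Phi,\gamma}(\rho_t)<\infty$ concludes. This ``vanishing at infinity'' together with $\nabla w\in L^2(\R^d)$ is precisely the hypothesis of the sharp Gagliardo--Nirenberg--Sobolev inequality in $d\ge3$, with no need for $w\in L^2$. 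The paper reaches the same endpoint by a density argument instead: it approximates $\rho_t$ by elements of $\{\gamma+\psi:\psi\in\C^\infty_c(\R^d)\}$, which is dense in $\Ent_{\Phi,\gamma}(\R^d)$, applies Sobolev to the smooth approximants (where everything is trivially justified), and passes to the limit. Either justification works; just drop the $H^1$ claim and promote your parenthetical to the main argument.
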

\begin{proof}  The proof is a straightforward consequence of the Gagliardo--Nirenberg--Sobolev inequality, H\"older's inequality, and the fact that functions of the form $\psi+\gamma$ for $\psi\in\C^\infty_c(\R^d)$ are dense in the space $\Ent_{\Phi,\gamma}(\R^d)$.  \end{proof}

We will now construct a solution of the skeleton equation
\begin{equation}\label{rel_eq}\partial_t\rho = \Delta\Phi(\rho)-\nabla\cdot(\Phi^\frac{1}{2}(\rho)g),\end{equation}
with initial data in $\Ent_{\Phi,\gamma}(\R^d)$.  We focus on the space $\Ent_{\Phi,\gamma}(\R^d)$ due to its relevance to large deviations below.  The argument can also be applied to initial data in $L^1_{\gamma}(\R^d)$ where, in the latter case, we would obtain local $H^1$-regularity of the solution in compactly supported neighborhoods of $\gamma\in(0,\infty)$.

\begin{proposition}\label{rel_exist}  Let $T\in(0,\infty)$, $\gamma\in(0,\infty)$, and let $\Phi\in\textrm{C}([0,\infty))\cap\textrm{C}^1_{\textrm{loc}}((0,\infty))$ satisfy Assumptions~\ref{ap_assume}.  Then, for every $\rho_0\in \Ent_{\Phi,\gamma}(\R^d)$ and $g\in (L^2_{t,x})^d$ there exists a nonnegative weak solution to \eqref{rel_eq} in the sense of Definition~\ref{def_sol_re} with $\eta=0$ that satisfies the relative entropy estimate of Proposition~\ref{rel_prop0}.\end{proposition}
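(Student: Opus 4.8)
The plan is to obtain a solution of the skeleton equation \eqref{rel_eq} by a double approximation: first regularise the nonlinearity $\Phi^{1/2}$ by a bounded Lipschitz $\sigma_\delta$ and add a viscosity term $\eta\Delta\rho$, then pass to the limit $\delta\to0$, $\eta\to0$ simultaneously, using the relative entropy estimate of Proposition~\ref{rel_prop0} as the only bound that survives uniformly. The key point is that none of the $L^p_\gamma$ norms are available in the limit, so all compactness must be extracted from the relative entropy functional and the entropy dissipation $\int_0^T\int_{\R^d}|\nabla\Phi^{1/2}(\rho)|^2$.

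First I would fix the approximating data and nonlinearities. Given $\rho_0\in\Ent_{\Phi,\gamma}(\R^d)$, pick $\rho_0^n\to\rho_0$ in $\Ent_{\Phi,\gamma}(\R^d)$ with $\rho_0^n\in\Ent_{\Phi,\gamma}\cap L^2_\gamma$ and $\rho_0^n$ bounded above and below (using density of $\psi+\gamma$, $\psi\in C^\infty_c$, as invoked in Proposition~\ref{prop_rel_int}). Choose $\sigma_\delta$ smooth, bounded, Lipschitz, with $|\sigma_\delta(\xi)|\le c\Phi^{1/2}(\xi)$ uniformly in $\delta$ and $\sigma_\delta\to\Phi^{1/2}$ locally uniformly, and a bounded Lipschitz truncation $\Phi'_\delta$ of $\Phi'$. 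Apply Proposition~\ref{rel_prop0} with $\eta=\eta_n\downarrow0$, $\delta=\delta_n\downarrow0$ to get solutions $\rho^n\in L^2_tH^1_{\gamma,x}$ of the regularised equation \eqref{rel_1} with the three estimates. The relative entropy estimate gives, uniformly in $n$,
\[
\sup_{t\in[0,T]}\int_{\R^d}\Psi_{\Phi,\gamma}(\rho^n(x,t))+\int_0^T\int_{\R^d}|\nabla\Phi^{1/2}(\rho^n)|^2\le c\Big(\int_{\R^d}\Psi_{\Phi,\gamma}(\rho_0)+\|g\|_{L^2_{t,x}}^2\Big).
\]
Next I would upgrade this to compactness. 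The bound on $\int_0^T\int_{\R^d}|\nabla\Phi^{1/2}(\rho^n)|^2$ together with Proposition~\ref{prop_rel_int} (for $d\ge3$; the cases $d=1,2$ are easier by Gagliardo--Nirenberg) controls $\Phi^{1/2}(\rho^n)-\Phi^{1/2}(\gamma)$ in $L^2_tL^{2_*}_x$, and interpolating with the pointwise relative entropy bound controls $\Phi^{1/2}(\rho^n)$ in $L^2_{t,\mathrm{loc}}H^1_x$; since $\nabla\Phi(\rho^n)=2\Phi^{1/2}(\rho^n)\nabla\Phi^{1/2}(\rho^n)$ and $\nabla\sigma_\delta(\rho^n)$ is likewise controlled, the equation \eqref{rel_1} gives an $H^1_tH^{-1}_{\mathrm{loc}}$ bound on $\rho^n$ (here one uses that $\eta_n\nabla\rho^n\to0$ in $L^2_{t,x}$, say by the energy estimate, whose right side is $n$-dependent but the term carries a factor $\eta_n$, so it is tempting — but one must be careful: the energy estimate's right side involves $\|\rho_0^n\|_{L^2_\gamma}$ which may blow up; instead I would use only $\sqrt{\eta_n}\nabla\rho^n$ bounded via the relative entropy estimate's term $\eta\int\frac{\Phi'(\rho)}{\Phi(\rho)}|\nabla\rho|^2$ under hypothesis \eqref{5_00}, or directly estimate the viscous term in $H^{-1}$ using $\eta_n\nabla\rho^n=\eta_n\Phi'(\rho^n)^{-1}\nabla\Phi(\rho^n)$ with \eqref{5_00}). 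By Aubin--Lions--Simon, along a subsequence $\rho^n\to\rho$ in $C([0,T];L^1_{\mathrm{loc}})$ and a.e., $\Phi^{1/2}(\rho^n)\rightharpoonup\Phi^{1/2}(\rho)$ weakly in $L^2_{t,\mathrm{loc}}H^1_x$, $\Phi(\rho^n)\to\Phi(\rho)$ in $L^1_{t,\mathrm{loc}}$, and $\sigma_{\delta_n}(\rho^n)\to\Phi^{1/2}(\rho)$ strongly in $L^2_{t,\mathrm{loc}}$ (using $|\sigma_{\delta_n}(\rho^n)|\le c\Phi^{1/2}(\rho^n)$, a.e. convergence, and equi-integrability from the $L^{2_*}$ bound). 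This suffices to pass to the limit in each term of the weak formulation of Definition~\ref{def_sol_re}, and lower semicontinuity of the entropy and dissipation functionals transfers the relative entropy estimate to $\rho$; continuity of $\rho$ as an $L^1_{\mathrm{loc}}$-valued function is inherited from the $C([0,T];L^1_{\mathrm{loc}})$ convergence.

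The main obstacle I anticipate is the interplay between the two limits $\delta\to0$ and $\eta\to0$ in the dissipation term: ensuring that $\int_0^T\int_{\R^d}|\nabla\sigma_{\delta_n}(\rho^n)|^2$ stays controlled requires $|\sigma'_{\delta_n}|\le c|(\Phi^{1/2})'|$ pointwise (so that $|\nabla\sigma_{\delta_n}(\rho^n)|\le c|\nabla\Phi^{1/2}(\rho^n)|$), which constrains the construction of $\sigma_\delta$; and the strong $L^2$ convergence of $\sigma_{\delta_n}(\rho^n)$ needed for the control term $\int\sigma(\rho)g\cdot\nabla\psi$ must be argued via Vitali, using the uniform $L^2_tL^{2_*}_x$ bound on $\Phi^{1/2}(\rho^n)-\Phi^{1/2}(\gamma)$ from Proposition~\ref{prop_rel_int} together with a.e. convergence, rather than from any global integrability. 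A secondary nuisance is that Proposition~\ref{rel_prop0} requires $\rho_0^n\in L^2_\gamma$ while the limit data is only in $\Ent_{\Phi,\gamma}$, so one must verify that the entropy-only estimates (items (1), (3) with the $\|\rho_0\|_{L^2_\gamma}$-free bound, i.e. the relative entropy estimate) are the ones that pass to the limit, discarding the $L^2_\gamma$ energy estimate which degenerates; this is why I would feed $\rho_0^n$ converging in $\Ent_{\Phi,\gamma}$ and rely solely on item (3) of Proposition~\ref{rel_prop0} for the uniform bounds.
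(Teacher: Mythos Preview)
Your proposal is correct and takes essentially the same approach as the paper: approximate via Proposition~\ref{rel_prop0} with smoothed data and nonlinearity, use the relative entropy estimate (item (3) there) as the sole uniform bound, extract local strong compactness by Aubin--Lions--Simon, and pass to the limit in the weak formulation. The paper's own proof is a one-liner referring to \cite[Proposition~20]{FehGes19} together with Proposition~\ref{rel_prop0}, so your expansion is precisely what is intended.

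Two small remarks. First, the paper only claims strong compactness in $L^1(B_N\times[0,T])$ for each $N\in\N$ and then passes to a diagonal subsequence, rather than obtaining $C([0,T];L^1_{\rm loc})$ directly from Aubin--Lions; the time-continuity required by Definition~\ref{def_sol_re} is then read off from the limiting equation once the weak formulation is established. Second, your concern about the viscous term $\eta_n\nabla\rho^n$ is most cleanly resolved not via the relative entropy dissipation but simply by tying the parameters: since you are free to choose the sequence $\eta_n$, take $\eta_n(1+\|\rho_0^n\|_{L^2_\gamma}^2)\to 0$, and then the energy estimate of Proposition~\ref{rel_prop0} gives $\|\eta_n\nabla\rho^n\|_{L^2_{t,x}}^2=\eta_n\cdot\eta_n\|\nabla\rho^n\|_{L^2_{t,x}}^2\le\eta_n(\|\rho_0^n\|_{L^2_\gamma}^2+c\|\sigma_n\|_{L^\infty}^2\|g\|_{L^2}^2)\to 0$, so the viscous term drops out in the limit without appealing to hypothesis~\eqref{5_00}.
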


\begin{proof}  The proof is a small modification of \cite[Proposition~20]{FehGes19} based on the relative entropy estimate of Propostion~\ref{rel_prop0}.  In this case, strong compactness is obtained in $L^1(B_N\times[0,T])$ for every $N\in\N$, and the solution is obtained as the limit along a diagonal subsequence. \end{proof}

 \section{Renormalised Kinetic Solutions of the Skeleton Equation}\label{renormalised_sol}  In this section, we establish the well-posedness of \emph{renormalised kinetic solutions} to the skeleton equation
\begin{equation} \label{intro_1} \partial_t \rho = \nabla\cdot(\Phi'(\rho)\nabla\rho)-\nabla\cdot(\Phi^\frac{1}{2}(\rho)g) = 2\nabla\cdot (\Phi^\frac{1}{2}(\rho)\nabla\Phi^\frac{1}{2}(\rho))-\nabla\cdot(\Phi^\frac{1}{2}(\rho)g),\end{equation}
set on the whole space with initial data with finite relative entropy with respect to some constant density $\gamma\in(0,\infty)$.  A detailed overview of the derivation of the kinetic equation can be found, for instance, in \cite{CP03} and \cite[Section~2]{FehGes19}, and here we only briefly recall the main details.

\subsection{Renormalised kinetic solutions}  In this section, we derive the kinetic formulation of \eqref{intro_1}.  After introducing an additional velocity variable $\xi\in\R$, the kinetic function $\chi$ of a solution $\rho$ is defined, for $\overline{\chi}\colon\R^2\rightarrow\{-1,0,1\}$ satisfying $\overline{\chi}(s,\xi)=\mathbf{1}_{\{0<\xi<s\}}-\mathbf{1}_{\{s<\xi<0\}}$, by
\[\chi(x,\xi,t) = \overline{\chi}(\rho(x,t),\xi) = \mathbf{1}_{\{0<\xi<\rho(x,t)\}},\]
where the final equality follows from the nonnegativity of the solution.  It then follows formally from the distributional identities
\[\nabla_x\chi(x,\xi,t)=\delta_0(\xi-\rho(x,t))\nabla \rho(x,t)\;\;\textrm{and}\;\;\partial_\xi\chi(x,\xi,t)=\delta_0(\xi)-\delta_0(\xi-\rho(x,t)),\]
that the kinetic function satisfies the equation
\[\partial_t \chi =\Phi'(\xi)\Delta_x\chi -\big(\partial_\xi \Phi^\frac{1}{2}(\xi)\big)g(x,t)\nabla_x\chi+ \big(\nabla_xg(x,t)\big)\Phi^\frac{1}{2}(\xi)\partial_\xi \chi+\partial_\xi p,\]
where $\chi(\cdot,0)= \overline{\chi}(\rho_0)$ and where $p$ is a locally finite, nonnegative measure on $\R^d\times\mathbb{R}\times(0,T)$ that satisfies
\begin{equation}\label{parabolic_defect_measure}\delta_0(\xi-\rho) \Phi'(\xi)\abs{\nabla\rho}^2\leq p.\end{equation}
However, due to the low regularity of the control $g$, we write the equation in its conservative form
\[\partial_t \chi =\Phi'(\xi)\Delta_x\chi -\partial_\xi\big(g(x,t)\Phi^\frac{1}{2}(\xi)\nabla_x\chi\big)+\nabla_x\cdot\big(g(x,t)\Phi^\frac{1}{2}(\xi)\partial_\xi \chi\big)+\partial_\xi p,\]
which, with the estimates of Propositions~\ref{rel_prop0} and \ref{prop_rel_int}, form the basis of the solution theory.

\begin{definition}\label{skel_sol_def_rel}  Let $T,\gamma\in(0,\infty)$, let $\Phi\in \C[0,\infty)\cap \C^1_{\textrm{loc}}((0,\infty))$, and let $\rho_0\in\Ent_{\Phi,\gamma}(\R^d)$.  A nonnegative function $\rho\in L^1([0,T],L^1_{\textrm{loc}}(\R^d))$ is a renormalised kinetic solution of \eqref{intro_1} with initial data $\rho_0$ if $\rho$ satisfies the following property.
\begin{enumerate}
\item \emph{The relative entropy estimate}:  we have that
\[\int_0^T\int_{\R^d}\abs{\nabla\Phi^\frac{1}{2}(\rho)}^2<\infty.\]
\end{enumerate}
Furthermore, there exists a nonnegative, locally finite measure $p$ on $\R^d\times (0,\infty) \times[0,T]$ that satisfies the following three properties.
\begin{enumerate}
\item \emph{Regularity}:  in the sense of measures,
\[\delta_0(\xi-\rho)\Phi'(\rho)\abs{\nabla\rho}^2\leq p\;\;\textrm{on}\;\;\R^d\times (0,\infty)\times[0,T].\]
\item \emph{Vanishing at infinity}:  we have that
\[\lim_{M\rightarrow\infty} p(\R^d\times[M,M+1]\times[0,T])=0.\]
\item \emph{The equation}: there exists a subset $\mathcal{N}\subseteq(0,T]$ of Lebesgue measure zero such that, for every $t\in[0,T]\setminus \mathcal{N}$, for every $\psi\in \C^\infty_c(\R^d\times(0,\infty))\cap C(\R^d\times[0,\infty))$ with $\psi(x,0)=0$, the kinetic function $\chi$ of $\rho$ satisfies that
\begin{align*}
& \int_{\R^d}\int_\mathbb{R} \chi(x,\xi,t)\psi(x,\xi)\dx\dxi =  -\int_0^t\int_{\R^d} \Phi'(\rho)\nabla\rho\cdot(\nabla_x\psi)(x,\rho)\dx\dr \nonumber
\\ & \quad + \int_0^t\int_{\R^d} \Phi^\frac{1}{2}(\rho)\nabla\rho\cdot g(x,t)(\partial_\xi\psi)(x,\rho)\dx\dr + \int_0^t\int_{\R^d} \Phi^\frac{1}{2}(\rho)g(x,t)\cdot (\nabla_x\psi)(x,\rho) \dx\dr\nonumber
\\ & \quad + \int_\R\int_{\R^d}\overline{\chi}(\rho_0(x),\xi)\psi(x,\xi)\dx\dxi- \int_0^t\int_\R\int_{\R^d}p\partial_\xi \psi \dx\dxi\dr.
\end{align*}
\end{enumerate}
\end{definition}

\begin{remark}  Observe that Definition~\ref{skel_sol_def_rel} does not impose any continuity on the solution in time.  However, using the local integrability implied by the relative entropy estimate, we prove in Proposition~\ref{rel_L1-continuity} below that the solutions admit continuous representatives in $\mathcal{C}$, and in fact are continuous in the strong Fr\'echet topology of $L^1_{{\rm loc},+}$. \end{remark}

\subsection{Uniqueness of renormalised kinetic solutions}\label{sec_unique}

In this section, we will prove the uniqueness of renormalised kinetic solutions in the sense of Definition~\ref{skel_sol_def_rel} for nonlinearities $\Phi$ that satisfy Assumption~\ref{as_unique} below.  The proof of uniqueness in Theorem~\ref{thm_rel_unique} is significantly complicated by the fact that the various terms in the equation and the parabolic defect measure are neither globally integrable nor decaying at infinity with respect to the velocity variable $\xi\in\R$.   We treat these terms by first introducing a cutoff in the velocity variable, which we remove using \eqref{skel_continuity_3} and the estimates of Proposition~\ref{prop_rel_unique} below.  Notice as well that, in comparison to \cite[Theorem~8]{FehGes19}, we do not obtain an $L^1$-contraction for the equation due to the fact that even the difference of two solutions need not be $L^1$-integrable.  For this reason, it is necessary to use Sobolev embeddings and the estimates of Proposition~\ref{prop_rel_int} to control spatial cutoff errors at infinity.

\begin{assumption}\label{as_unique}  Let $\Phi\in\C([0,\infty))\cap\C^1_{\textrm{loc}}((0,\infty))$ satisfy $\Phi(0)=0$ and $\Phi'(\xi)>0$ for every $\xi\in(0,\infty)$.  Assume that
\[\Phi'\;\;\textrm{is locally $\nicefrac{1}{2}$-H\"older continuous on $(0,\infty)$,}\]
and that there exists $c\in(0,\infty)$ such that, for every $M\in(0,\infty)$,
\begin{equation}\label{skel_continuity_3}\sup_{0\leq \xi\leq M}\abs{\frac{\Phi(\xi)}{\Phi'(\xi)}}\leq cM.\end{equation}
\end{assumption}

\begin{proposition}\label{prop_rel_unique}  Let $T,\gamma\in(0,\infty)$, let $\Phi\in\textrm{C}([0,\infty))\cap\textrm{C}^1_{\textrm{loc}}((0,\infty))$ satisfy Assumption~\ref{as_unique}, let $g\in (L^2_{t,x})^d$, and let $\rho_0\in \Ent_{\Phi,\gamma}(\R^d)$.  Then, if $\rho$ is a solution of \eqref{intro_1} with control $g$, initial data $\rho_0$, and kinetic measure $p$, we have for every $M\in(\gamma,\infty)$ that, for some $c\in(0,\infty)$,
\[\int_{R^d}\Psi_{\Phi,\gamma}(\rho(x,T)) +\int_0^T\int_\R\int_{\R^d}\frac{\Phi'(\xi)}{\Phi(\xi)} p\leq c\Big(\int_{\R^d}\Psi_{\Phi,\gamma}(\rho_0)+\int_0^T\int_{\R^d}\abs{g}^2\Big),\]
which implies, in particular, that
\[\int_0^T\int_\R\int_{\R^d}\frac{1}{\xi} p\leq c\Big(\int_{\R^d}\Psi_{\Phi,\gamma}(\rho_0)+\int_0^T\int_{\R^d}\abs{g}^2\Big).\]
\end{proposition}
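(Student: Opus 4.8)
<br>

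The plan is to obtain the stated entropy estimate by testing the kinetic equation from Definition~\ref{skel_sol_def_rel} against a renormalisation of the velocity variable that reproduces $\Psi_{\Phi,\gamma}$, combined with a spatial cutoff that is removed at the end using the interpolation bound of Proposition~\ref{prop_rel_int}. Concretely, I would approximate the derivative $\Psi_{\Phi,\gamma}'(\xi)=\log(\Phi(\xi)/\Phi(\gamma))$ by a family of smooth, compactly-$\xi$-supported test functions $\psi_{M,R}(x,\xi)=\beta_R(x)\,S_M(\xi)$, where $\beta_R$ is a spatial cutoff equal to $1$ on $B_R$ and supported in $B_{2R}$, and $S_M$ is a smoothed version of $\min(\max(\log(\Phi(\xi)/\Phi(\gamma)),-\log M),\log M)$ which vanishes near $\xi=0$ (so that $\psi_{M,R}\in C^\infty_c(\R^d\times(0,\infty))$ as required, with $\psi_{M,R}(x,0)=0$). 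Plugging $\psi_{M,R}$ into the kinetic identity, the left-hand side $\int\!\int \chi(x,\xi,T)\psi_{M,R}$ converges, as $M\to\infty$ and then $R\to\infty$, to $\int_{\R^d}\Psi_{\Phi,\gamma}(\rho(x,T))$ by monotone convergence (using $\Psi_{\Phi,\gamma}\ge0$ and $\int\chi(x,\xi,T)\,S_M'(\xi)\,d\xi\to\Psi_{\Phi,\gamma}(\rho(x,T))$), and similarly the initial term converges to $\int_{\R^d}\Psi_{\Phi,\gamma}(\rho_0)$, which is finite by hypothesis.

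The key structural point is the defect term $-\int_0^T\!\int_\R\!\int_{\R^d} p\,\partial_\xi\psi_{M,R}$. Since $\partial_\xi S_M(\xi)\to \Phi'(\xi)/\Phi(\xi)$ pointwise from below (it is nonnegative, and equals $\Phi'(\xi)/\Phi(\xi)$ on the region where the cutoff is inactive), and $p\ge0$, Fatou's lemma gives
\[
\liminf_{R\to\infty}\liminf_{M\to\infty}\Big(-\int_0^T\!\int_\R\!\int_{\R^d} p\,\partial_\xi\psi_{M,R}\Big)\ \geq\ \int_0^T\!\int_\R\!\int_{\R^d}\frac{\Phi'(\xi)}{\Phi(\xi)}\,p,
\]
after first passing to $\beta_R\equiv1$ using the local finiteness of $p$ on $\R^d\times(0,\infty)\times[0,T]$ together with the vanishing-at-infinity property $p(\R^d\times[M,M+1]\times[0,T])\to0$, which controls the contribution from the region where $S_M'$ is cut off near $\xi=M$. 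The three remaining flux terms — involving $\Phi'(\rho)\nabla\rho\cdot(\nabla_x\psi)(x,\rho)$, $\Phi^{1/2}(\rho)\nabla\rho\cdot g\,(\partial_\xi\psi)(x,\rho)$, and $\Phi^{1/2}(\rho)g\cdot(\nabla_x\psi)(x,\rho)$ — must be shown to be bounded, in absolute value, by the right-hand side $c\big(\int\Psi_{\Phi,\gamma}(\rho_0)+\int\!\int|g|^2\big)$ uniformly in $M$, and to have the spatial-cutoff pieces (supported on $B_{2R}\setminus B_R$) vanish as $R\to\infty$.

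For the flux estimates I would proceed as follows. The first term, $-\int_0^t\!\int_{\R^d}\Phi'(\rho)\nabla\rho\cdot\nabla_x\beta_R\,S_M'(\rho)$, uses $\Phi'(\rho)S_M'(\rho)\le \Phi'(\rho)^2/\Phi(\rho)$ and $\Phi'(\rho)\nabla\rho = 2\Phi^{1/2}(\rho)\nabla\Phi^{1/2}(\rho)$, so Cauchy--Schwarz bounds it by $\|\nabla\beta_R\|_{L^\infty}$ times $\|\nabla\Phi^{1/2}(\rho)\|_{L^2_{t,x}}$ times the $L^2$ norm of $(\Phi'(\rho)/\Phi^{1/2}(\rho))(\Phi^{1/2}(\rho)-\Phi^{1/2}(\gamma))$ on the annulus — this is where Assumption~\ref{as_unique}, via \eqref{skel_continuity_3}, lets me replace $\Phi'(\rho)/\Phi^{1/2}(\rho)$ by a multiple of $\rho^{-1/2}$ on the relevant region and hence control the annular integral via Proposition~\ref{prop_rel_int}, giving a bound that tends to $0$ as $R\to\infty$; on $B_R$, where $\nabla\beta_R=0$, there is no contribution. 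The terms involving $g$ are handled the same way after a Young's inequality to absorb $\Phi^{1/2}(\rho)\nabla\Phi^{1/2}(\rho)$-type factors into $\|\nabla\Phi^{1/2}(\rho)\|^2_{L^2_{t,x}}$ and $\|g\|^2_{L^2_{t,x}}$, again using $|\sigma|,\Phi^{1/2}$ bounds and \eqref{skel_continuity_3}; the interior contributions produce a multiple of $\|g\|^2_{L^2_{t,x}}$, and the annular ones vanish. Collecting all bounds, passing to the limit $M\to\infty$ then $R\to\infty$, and using $\Phi(\xi)\ge \Phi(\gamma)\xi/( \text{const}\cdot M)$-type lower bounds — more simply, $\Phi'(\xi)/\Phi(\xi)\ge c/\xi$ follows directly from \eqref{skel_continuity_3} since it gives $\Phi(\xi)\le c\,\xi\,\Phi'(\xi)$ — yields both displayed inequalities.

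The main obstacle I anticipate is the bookkeeping of the double limit: one must choose the order (first $M\to\infty$ at fixed $R$, then $R\to\infty$, or interleave) so that on the left the monotone/Fatou lower bounds survive while on the right the annular flux terms genuinely vanish, and this requires that the bound on the interior flux terms be uniform in \emph{both} $M$ and $R$. The delicate point is that $S_M'$ is only an approximation of $\Phi'/\Phi$ from below away from $\xi=0$, so near $\xi=0$ one needs that $\Psi_{\Phi,\gamma}'=\log(\Phi/\Phi(\gamma))\to-\infty$ is integrated against $\chi$ which is supported on $\{0<\xi<\rho\}$ — the local integrability hypotheses in Assumption~\ref{ap_assume} (or the mild regularity in Assumption~\ref{as_unique}) guarantee these near-zero contributions are harmless, but this must be checked carefully rather than waved through.
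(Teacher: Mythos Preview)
Your approach is essentially the paper's: both test the kinetic identity against cutoff approximations of $\Psi_{\Phi,\gamma}'(\xi)=\log(\Phi(\xi)/\Phi(\gamma))$ and pass to the limit. Your treatment of the cutoff removal is in fact more detailed than the paper's, which simply asserts that the testing can be justified.

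There is, however, one genuine gap. You say the interior $g$-flux contributions ``produce a multiple of $\|g\|^2_{L^2_{t,x}}$'', but this is not true on its own. In the limit, the term $\int_0^t\int_{\R^d}\Phi^{1/2}(\rho)\nabla\rho\cdot g\,(\partial_\xi\psi)(x,\rho)$ becomes $2\int_0^T\int_{\R^d} g\cdot\nabla\Phi^{1/2}(\rho)$, and Young's inequality only gives
\[
2\int g\cdot\nabla\Phi^{1/2}(\rho)\ \le\ \tfrac12\|g\|^2_{L^2_{t,x}}+2\|\nabla\Phi^{1/2}(\rho)\|^2_{L^2_{t,x}}.
\]
The second term cannot be bounded by the right-hand side of the proposition; it must be \emph{absorbed into the left-hand side}. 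The mechanism for this is the regularity property of the kinetic measure in Definition~\ref{skel_sol_def_rel}, namely $p\ge \delta_0(\xi-\rho)\Phi'(\rho)|\nabla\rho|^2$, which yields
\[
\int_0^T\!\int_\R\!\int_{\R^d}\frac{\Phi'(\xi)}{\Phi(\xi)}\,p\ \ge\ \int_0^T\!\int_{\R^d}\frac{\Phi'(\rho)^2}{\Phi(\rho)}|\nabla\rho|^2\ =\ 4\int_0^T\!\int_{\R^d}|\nabla\Phi^{1/2}(\rho)|^2.
\]
You never invoke this property, and without it the estimate closes only with $\|\nabla\Phi^{1/2}(\rho)\|^2_{L^2_{t,x}}$ on the right, which is not what the proposition asserts. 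Once you add this step---subtracting, say, half of the defect term from both sides to swallow the Young remainder---the argument goes through exactly as the paper's does, and the second displayed inequality follows from \eqref{skel_continuity_3} as you correctly note.
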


\begin{proof}  After introducing smooth cutoff functions in space and velocity, and considering smooth and bounded approximations of $\psi(\xi)=\Psi'_{\Phi,\gamma}(\xi)$,  it is possible to justify testing the equation with $\Psi'_{\Phi,\gamma}$.  For this, we obtain that
\[\int_{R^d}\Psi_{\Phi,\gamma}(\rho(x,T)) +\int_0^T\int_\R\int_{\R^d}\frac{\Phi'(\xi)}{\Phi(\xi)} p = \int_{\R^d}\Psi_{\Phi,\gamma}(\rho_0)+2\int_0^T\int_{\R^d}g\cdot \nabla\Phi^\frac{1}{2}(\rho).\]
Since the regularity of the kinetic measure proves that
\[\frac{\Phi'(\xi)}{\Phi(\xi)}p\geq \delta_0(\xi-\rho)\frac{\Phi'(\xi)^2}{\Phi(\xi)}\abs{\nabla\rho}^2=\delta_0(\xi-\rho) 4\abs{\nabla\Phi^\frac{1}{2}(\rho)}^2, \]
it follows from H\"older's inequality and Young's inequality that, for some $c\in(0,\infty)$,
\[\int_{R^d}\Psi_{\Phi,\gamma}(\rho(x,T)) +\int_0^T\int_\R\int_{\R^d}\frac{\Phi'(\xi)}{\Phi(\xi)} p\leq c\Big(\int_{\R^d}\Psi_{\Phi,\gamma}(\rho_0)+\int_0^T\int_{\R^d}\abs{g}^2\Big).\]
The final claim then follows from the assumption that $\frac{\Phi(\xi)}{\Phi'(\xi)}\leq c\xi$ so that
\[\int_{R^d}\Psi_{\Phi,\gamma}(\rho(x,T)) +\int_0^T\int_\R\int_{\R^d}\frac{1}{\xi} p\leq c\Big(\int_{\R^d}\Psi_{\Phi,\gamma}(\rho_0)+\int_0^T\int_{\R^d}\abs{g}^2\Big),\]
which completes the proof.  \end{proof}

\begin{theorem}\label{thm_rel_unique} Let $T,\gamma\in(0,\infty)$, let $\Phi\in\textrm{C}([0,\infty))\cap\textrm{C}^1_{\textrm{loc}}((0,\infty))$ satisfy Assumption~\ref{as_unique}, let $g\in (L^2_{t,x})^d$, and let $\rho_0\in\Ent_{\Phi,\gamma}(\R^d)$.  Then, a solution of \eqref{intro_1} in the sense of Definition~\ref{skel_sol_def_rel} with control $g$ and with initial data $\rho_0$ is unique.
\end{theorem}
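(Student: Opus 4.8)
The plan is to follow the doubling-of-variables strategy of \cite[Theorem~8]{FehGes19}, but organised around the relative entropy $\Psi_{\Phi,\gamma}$ rather than the $L^1$-norm, and with careful truncation in both the velocity variable $\xi$ and the spatial variable $x$. Let $\rho^1,\rho^2$ be two renormalised kinetic solutions with the same data $\rho_0$ and control $g$, with kinetic functions $\chi^1,\chi^2$ and kinetic measures $p^1,p^2$. As in the classical theory, I would study the quantity $\int_{\R^d}\int_\R \bigl(\chi^1(1-\chi^2) + \chi^2(1-\chi^1)\bigr)$, which controls $\int_{\R^d}|\rho^1-\rho^2|$, but since this need not be finite here, I would instead weight it by a spatial cutoff $\varphi_R(x)=\varphi(x/R)$ with $\varphi\in C^\infty_c$, $0\le\varphi\le 1$, $\varphi\equiv 1$ on $B_1$, and by a velocity cutoff $\beta_M(\xi)$ supported on $\xi\le M+1$, $\beta_M\equiv1$ on $\xi\le M$. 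First I would mollify both equations in $x$ (and regularise in time), test the equation for $\chi^1$ against a mollified version of $\chi^2$ and vice versa, add the two, and pass to the limit in the mollification parameter to obtain the standard microscopic contraction inequality: the parabolic terms combine to produce $-\int(p^1+p^2)\,\partial_\xi(\text{cutoff})$ plus a nonnegative dissipation term from $\delta_0(\xi-\rho^i)\Phi'|\nabla\rho^i|^2\le p^i$, while the transport terms in $g$ produce, after the limit, only boundary contributions supported on the cutoffs' derivatives; the key cancellation is that the bulk $g$-terms vanish because $\chi^1,\chi^2$ have the same sign structure and the same velocity profile $\Phi^{1/2}(\xi)$.

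Second I would remove the velocity cutoff $M\to\infty$. This is where Proposition~\ref{prop_rel_unique} is essential: the term $\int p^i\,\partial_\xi\beta_M$ is supported on $\xi\in[M,M+1]$, and is bounded by $\|\partial_\xi\beta_M\|_\infty\, p^i(\R^d\times[M,M+1]\times[0,T])$, which tends to zero by the ``vanishing at infinity'' property of Definition~\ref{skel_sol_def_rel}, uniformly once we use the quantitative bound $\int_0^T\!\int_\R\!\int_{\R^d}\xi^{-1}p^i\le c(\int\Psi_{\Phi,\gamma}(\rho_0)+\|g\|_{L^2_{t,x}}^2)$ from Proposition~\ref{prop_rel_unique} to control the tail. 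The $g$-transport boundary terms at velocity $M$ likewise involve $\Phi^{1/2}(\xi)$ restricted near $\xi=M$ against $\nabla\rho^i$ on $\{\rho^i\approx M\}$, which I would control by Cauchy--Schwarz against the dissipation measure and the $L^2_{t,x}$ norm of $g$, again using that the dissipation is a finite measure.

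Third I would remove the spatial cutoff $R\to\infty$. Here the obstruction — and what I expect to be the main difficulty — is that $\rho^1-\rho^2$ is only locally $L^1$, so the error terms from $\nabla\varphi_R$ do not obviously vanish. The remedy, following the hint in the excerpt and the role of Proposition~\ref{prop_rel_int}, is interpolation: the spatial error terms are of the form $\int_0^T\!\int_{\R^d}\Phi'(\rho^i)\nabla\rho^i\cdot\nabla\varphi_R\,(\cdots)$ and $\int_0^T\!\int_{\R^d}\Phi^{1/2}(\rho^i)g\cdot\nabla\varphi_R\,(\cdots)$, and one wants to bound these by something like $R^{-1}$ times a norm of $\Phi^{1/2}(\rho^i)-\Phi^{1/2}(\gamma)$ in $L^2_t L^{2_*}_x$ (using $\|\nabla\varphi_R\|_{L^d}=\|\nabla\varphi\|_{L^d}$ is scale-invariant, so the decay must come from the measure of the annulus $\{R\le|x|\le 2R\}$ combined with Hölder against the $2_*$-integrable quantity). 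Concretely, I would write $\Phi'(\rho)\nabla\rho = 2\Phi^{1/2}(\rho)\nabla\Phi^{1/2}(\rho)$, bound $\nabla\Phi^{1/2}(\rho)$ in $L^2_{t,x}$ by the relative entropy estimate, bound $\Phi^{1/2}(\rho)-\Phi^{1/2}(\gamma)$ in $L^2_t L^{2_*}_x$ by Proposition~\ref{prop_rel_int}, and use Hölder on the annulus $B_{2R}\setminus B_R$ with the third exponent absorbing the constant part $\Phi^{1/2}(\gamma)|B_{2R}\setminus B_R|\cdot\|\nabla\varphi_R\|_\infty$, which is $O(R^{d-1}\cdot R^{-1})=O(R^{d-2})$ — this is why $d\ge 3$ and the exponent $2_* = 2d/(d-2)$ enter, and why the $\Psi_{\Phi,\gamma}$-based a priori bounds must be quantitative. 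For $d=1,2$ the constant term would have to be handled separately, e.g. by noting $\rho-\gamma$ itself has better integrability near the tail via Proposition~\ref{prop_rel_unique} or by a logarithmic cutoff; I would check whether the paper's standing hypotheses restrict to $d\ge 3$ at this point or supply the low-dimensional argument via \eqref{5_0}. Once both cutoffs are removed, the surviving inequality reads $\int_{\R^d}|\rho^1(\cdot,T)-\rho^2(\cdot,T)|\,\varphi_R \le \int_{\R^d}|\rho_0-\rho_0|\varphi_R = 0$ in the limit (plus a nonpositive dissipation defect), forcing $\rho^1=\rho^2$ a.e.; the time-continuity upgrade of Proposition~\ref{rel_L1-continuity} then gives equality at every time, completing the proof.
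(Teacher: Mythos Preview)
Your outline follows the paper's strategy closely, and the roles you assign to Proposition~\ref{prop_rel_unique} (for the velocity cutoff) and Proposition~\ref{prop_rel_int} (for the spatial cutoff) are correct. There is, however, one genuine gap: your velocity cutoff $\beta_M$ truncates only at large $\xi$, whereas the paper's cutoff $\zeta^M$ is two-sided, vanishing on $[0,M^{-1}]$ as well as on $[2M,\infty)$. The lower cutoff is not optional. First, Definition~\ref{skel_sol_def_rel} only allows test functions compactly supported in $(0,\infty)$, so without the lower cutoff your mollified test functions are not admissible. Second, and more substantively, the commutator errors in the parabolic and hyperbolic terms are controlled via the local $\nicefrac{1}{2}$-H\"older continuity of $\Phi'$ and the local Lipschitz continuity of $\Phi^{1/2}$ on compact subsets of $(0,\infty)$, which is all that Assumption~\ref{as_unique} gives you; the two-sided cutoff guarantees $\rho^i\in[(2M)^{-1},2M]$ on the relevant set (see \eqref{exp_50}), and without it those errors need not vanish as $\delta\to 0$. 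The removal of the lower cutoff at the end is handled by the same mechanism as the upper one: the error is bounded by $\int_{\{M^{-1}\le\xi\le 2M^{-1}\}}\xi^{-1}p^i$, which Proposition~\ref{prop_rel_unique} sends to zero.

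Two smaller points. The paper's order of limits is $\varepsilon\to 0$, $\delta\to 0$, then $R\to\infty$, and only then $M\to\infty$; your proposed order (velocity first, then space) would leave you, after $M\to\infty$, with spatial-cutoff errors involving the unbounded $\Phi(\rho^1)-\Phi(\rho^2)$ rather than the bounded $\Phi_M(\rho^1)-\Phi_M(\rho^2)$, which breaks the low-dimensional arguments. For $d=1,2$ the paper does supply explicit arguments: in $d=1$ one uses $\|\Phi_M\|_{L^\infty}<\infty$ and $\int|\Delta\varphi_R|\to 0$; in $d=2$ one integrates by parts to trade $\Delta\varphi_R$ for $\nabla\varphi_R$ and uses that $\int|\nabla\varphi_R|^2$ is bounded together with the $L^2_{t,x}$-bound on $\nabla\Phi^{1/2}(\rho^i)$.
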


\begin{proof}  For the kinetic function $\chi^i$ of $\rho^i$, we will write $\chi^i_t(x,\xi)=\chi(x,\xi,t)$ and we will make similar conventions for the defect measure $p^i_t$ and all other time-dependent functions or measures appearing in the proof.  For every $\ve\in(0,1)$ let $\kappa^\ve$ be a standard convolution kernel on $\R^d$ of scale $\ve$, for every $\d\in(0,1)$ let $\kappa^\d$ be a standard convolution kernel on $\mathbb{R}$ of scale $\d$, and for every $\varepsilon,\delta\in(0,1)$ let $\kappa^{\ve,\d}(x,y,\xi,\eta)=\kappa^\ve(x-y)\kappa^\d(\xi-\eta)$.  Let $\chi^{i,\ve,\d}_t=(\chi^i_t*\kappa^{\ve,\d})$ and let $\sgn^\delta(\eta) = (\sgn*\kappa^\d)(\eta)=(\sgn*\kappa^{\ve,\d})(y,\eta)$, where in what follows the convolution kernel $\kappa^{\ve,\d}$ will play the role of the test function in Definition~\ref{skel_sol_def_rel}.  It is also necessary to introduce cutoffs in the spatial and velocity variables.  For this, for every $M\in[2,\infty)$, let $\zeta^M:[0,\infty)\rightarrow[0,1]$ satisfy $\zeta^M=0$ on $[0,M^{-1}]\cup[2M,\infty)$ and $\zeta^M = 1$ on $[2M^{-1},M]$ and let $\zeta^M$ linearly interpolate between the values $\{0,1\}$ on $[M^{-1},2M^{-1}]$ and $[M,2M]$.  And for every $R\in[1,\infty)$ let $\varphi_R$ satisfy $\varphi_R=1$ on $\overline{B}_R$, $\varphi_R=0$ on $B_{2R}^c$, and $\abs{\nabla \varphi_R}+R\abs{\nabla^2\varphi_R}\leq \nicefrac{c}{R}$ for some $c\in(0,\infty)$ independent of $R$.  We will now begin with the proof of uniqueness.

In what follows, when necessary we will write $(x,\xi)\in\R^d\times\mathbb{R}$ for the variables inside the convolutions, and we will write $(y,\eta)\in\R^d\times\mathbb{R}$ for the outer integration variables.  The fact that the kinetic function is $\{0,1\}$-valued proves that
\begin{align}\label{su_0}
&   \int_\mathbb{R}\int_{\R^d}\abs{\chi^1_t-\chi^2_t}^2\zeta^M(\eta)\varphi_R(y)\dy\deta \nonumber
\\ & =\lim_{\varepsilon,\delta\rightarrow 0}\int_\mathbb{R}\int_{\R^d}\abs{\chi^{1,\varepsilon,\delta}_t-\chi^{2,\varepsilon,\delta}_t}^2\zeta^M(\eta)\varphi_R(y)\dy\deta\nonumber
\\ & = \lim_{\varepsilon,\delta\rightarrow 0}\int_\mathbb{R}\int_{\R^d}\left(\chi^{1,\ve,\delta}_t\sgn^\delta(\eta)+\chi^{2,\varepsilon,\delta}_t\sgn^\delta(\eta)-2\chi^{1,\ve,\delta}_t\chi^{2,\ve,\delta}_t\right)\zeta^M(\eta)\varphi_R(y)\dy\deta. 
\end{align}
Let $\overline{\kappa}^{\ve,\d}_i$ be defined by
\[\overline{\kappa}^{\ve,\d}_{i,t}(x,y,\eta)=\kappa^{\ve,\d}(x,y,\rho^i(x,t),\eta),\]
and observe from the equation satisfied by $\rho^i$ that, as distributions on $\R^d\times\mathbb{R}\times(0,T)$,
\begin{align*}
\partial_t\chi^{i,\ve,\delta}_t= & -(\Phi'(\rho^i)\nabla\rho^i*\nabla_x\overline{\kappa}_{i,t}^{\ve,\d})-(p^i_t*\partial_\xi\kappa^{\ve,\d})
\\ & +\Big(\Phi^\frac{1}{2}(\rho^i)g\cdot \nabla\rho^i*\partial_\xi \overline{\kappa}_i^{\ve,\d}\Big)+ (\Phi^\frac{1}{2}(\rho^i)g(x,t)*\nabla_x\overline{\kappa}_i^{\ve,\d}).
\end{align*}
It then follows from the symmetry of the convolution kernel that
\begin{align}\label{su_7}
 \partial_t\chi_t^{i,\ve,\delta} & = \nabla_y\cdot (\Phi'(\rho^i)\nabla\rho^i*\overline{\kappa}_{i,t}^{\ve,\d}) + \partial_\eta(p^i_t*\kappa^{\ve,\d}) \nonumber
\\ & \quad -\partial_\eta\big(\Phi^\frac{1}{2}(\rho^i)g\cdot \nabla\rho^i*\partial_\xi \overline{\kappa}_i^{\ve,\d}\big) - \nabla_y\cdot \big(\Phi^\frac{1}{2}(\rho^i)g(x,t)*\overline{\kappa}_i^{\ve,\d}\big).
\end{align}
We define
\begin{equation}\label{su_0007}\mathbb{I}^{\ve,\d,M,R}_t=\int_\mathbb{R}\int_{\R^d}\left(\chi^{1,\ve,\delta}_t\sgn^\delta(\eta)+\chi^{2,\varepsilon,\delta}_t\sgn^\delta(\eta)-2\chi^{1,\ve,\delta}_t\chi^{2,\ve,\delta}_t\right)\zeta^M(\eta)\varphi_R(y)\dy\deta.\end{equation}
We will analyze the terms involving the $\sgn$ function and the mixed term separately.  For every $i\in\{1,2\}$ let
\begin{equation}\label{su_07}\mathbb{I}^{\ve,\d,M}_{t,i,\sgn}=\int_\mathbb{R}\int_{\R^d}\chi^{i,\ve,\delta}_t\sgn^\delta(\eta)\zeta^M(\eta)\varphi_R(y)\;\;\textrm{and}\;\;\mathbb{I}^{\ve,\d,M}_{t,\textrm{mix}}=\int_{\mathbb{R}}\int_{\R^d}\chi^{1,\ve,\delta}_t\chi^{2,\ve,\delta}_t\zeta^M(\eta)\varphi_R(y).\end{equation}
In what follows, we will let $t\in[0,T]$, $M\in(0,\infty)$, $\ve\in(0,1)$, and $\d\in(0,(2M)^{-1})$ be arbitrary.  In particular, this choice of $\d$ guarantees that
\begin{equation}\label{exp_50} \textrm{$\rho^i>(2M)^{-1}$ on the support of $\overline{\kappa}^{\ve,\d}_{i,t}\zeta^M$,}\end{equation}
and that, by the triangle inequality,
\begin{equation}\label{exp_51}\kappa^{\ve,\d}(x,y,\xi,\eta)\kappa^{\ve,\d}(x',y,0,\eta)\zeta_M(\eta) = 0.\end{equation}
In the argument, we will first let $\ve\rightarrow 0$, then $\d\rightarrow 0$, then $R\rightarrow\infty$, and then $M\rightarrow\infty$ which is consistent with this choice.

\textbf{The sign terms.}  We will first analyze the $\sgn $ terms \eqref{su_07}, and we will first consider the case $i=1$.  It follows from \eqref{su_7} that, as distributions on $(0,T)$,
\begin{align*}
& \partial_t \mathbb{I}^{\ve,\d,M}_{t,1,\sgn} = -\int_{\mathbb{R}}\int_{\R^d}(p^1_t*\kappa^{\ve,\d})\partial_\eta\Big(\sgn^\d(\eta)\zeta^{M}(\eta)\Big)\varphi_R
\\ &  + \int_\mathbb{R}\int_{\R^d}\big(\Phi^\frac{1}{2}(\rho^1)g\cdot \nabla\rho^1* \overline{\kappa}_{1,t}^{\ve,\d}\big)\partial_\eta\Big(\sgn^\d(\eta)\zeta^M(\eta)\Big)\varphi_R
\\ & -\int_\R\int_{\R^d}(\Phi'(\rho^1)\nabla\rho^1*\overline{\kappa}_{1,t}^{\ve,\d})\cdot\nabla\varphi_R(\sgn^\d(\eta)\zeta^M(\eta))
\\ & +\int_\R\int_{\R^d}\big(\Phi^\frac{1}{2}(\rho^1)g*\overline{\kappa}_{1,t}^{\ve,\d}\big)\cdot\nabla\varphi_R(\sgn^\d(\eta)\zeta^M(\eta)).
\end{align*}
The symmetry of the convolution kernel, the distributional equality $\partial_\xi\sgn=2\delta_0$ and \eqref{exp_51} prove that
\begin{align}\label{su_11}
\partial_t \mathbb{I}^{\ve,\d,M}_{t,1,\sgn}=&- \int_{\mathbb{R}}\int_{\R^d}(p^1_t*\kappa^{\ve,\d})\sgn^\delta(\eta)\partial_\eta\zeta^M\varphi_R \nonumber
\\ & + \int_{\mathbb{R}}\int_{\R^d}\big(\Phi^\frac{1}{2}(\rho^1)g\cdot \nabla\rho^1* \overline{\kappa}_{1,t}^{\ve,\d}\big)\sgn^\d(\eta)\partial_\eta\zeta^M\varphi_R\nonumber
\\ & -\int_\R\int_{\R^d}(\Phi'(\rho^1)\nabla\rho^1*\overline{\kappa}_{1,t}^{\ve,\d})\cdot\nabla\varphi_R(\sgn^\d(\eta)\zeta^M(\eta)) \nonumber 
\\  & +\int_\R\int_{\R^d}\big(\Phi^\frac{1}{2}(\rho^1)g*\overline{\kappa}_{1,t}^{\ve,\d}\big)\cdot\nabla\varphi_R(\sgn^\d(\eta)\zeta^M(\eta)).
\end{align}
The case $i=2$ is identical and this completes the initial analysis of the $\sgn$ terms.

\textbf{The mixed term.}  As distributions on $(0,T)$,
\begin{align}\label{su_011}
\partial_t\mathbb{I}^{\ve,\d,M}_{t,\textrm{mix}} = & \int_{\mathbb{R}}\int_{\R^d}\partial_t\chi^{1,\ve,\d}_t\chi^{2,\ve,\d}_t\zeta^M(\eta)\varphi_R(y)\dy\deta+\int_{\mathbb{R}}\int_{\R^d}\chi^{1,\ve,\d}_t\partial_t\chi^{2,\ve,\d}_t\zeta^M(\eta)\varphi_R(y)\dy\deta\nonumber
\\ = & \partial_t\mathbb{I}^{\ve,\d,M}_{t,1,\textrm{mix}}+\partial_t\mathbb{I}^{\ve,\d,M}_{t,2,\textrm{mix}}.
\end{align}
Since we have the distributional equality $\partial_{\xi}\chi^2=\delta_0(\xi)-\delta_0(\xi-\rho^2)$, it follows from \eqref{su_7} that
\begin{align}\label{su_18}
& \partial_t\mathbb{I}^{\ve,\d,M}_{t,1,\textrm{mix}}  =   -\int_\mathbb{R}\int_{\R^d}(\Phi'(\rho^1)\nabla\rho^1*\overline{\kappa}^{\ve,\d}_{1,t})\cdot (\nabla\rho^2*\overline{\kappa}^{\ve,\d}_{2,t})\zeta^M\varphi_R \nonumber
\\ &  \quad +\int_{\mathbb{R}}\int_{(\R^d)^2}(p^1_t*\kappa^{\ve,\d}_1)\overline{\kappa}^{\ve,\d}_{2,t}\zeta^M\varphi_R  -\int_\mathbb{R}\int_{(\R^d)^2}\big(\Phi^\frac{1}{2}(\rho^1)g\cdot\nabla\rho^1*\overline{\kappa}_{1,t}^{\ve,\d}\big)\overline{\kappa}^{\ve,\d}_{2,t}\zeta^M\varphi_R \nonumber
\\ & \quad +\int_\mathbb{R}\int_{\R^d}\big(\Phi^\frac{1}{2}(\rho^1)g*\overline{\kappa}_{1,t}^{\ve,\d}\big)\cdot\big(\nabla\rho^2*\overline{\kappa}^{\ve,\d}_{2,t}\big)\zeta^M\varphi_R \nonumber
\\ & \quad +\int_\mathbb{R}\int_{\R^d}\big(\big(\Phi^\frac{1}{2}(\rho^1)g\cdot \nabla\rho^1* \overline{\kappa}_{1,t}^{\ve,\d}\big)-(p^1_t*\kappa^{\ve,\d}_1)\big)\chi^{2,\ve,\d}_t\partial_\eta\zeta^M\varphi_R \nonumber
\\ & \quad  -\int_{\R}\int_{\R^d}(\Phi'(\rho^1)\nabla\rho^1*\overline{\kappa}_{1,t}^{\ve,\d})\cdot\nabla\varphi_R(\chi^{2,\ve,\d}_t\zeta^M) +\int_\R\int_{\R^d}\big(\Phi^\frac{1}{2}(\rho^1)g*\overline{\kappa}_{1,t}^{\ve,\d}\big)\cdot\nabla\varphi_R(\chi^{2,\ve,\d}_t\zeta^M).
\end{align}
We obtain an identical formula for $\mathbb{I}^{\ve,\d,M}_{t,2,\textrm{mix}}$ after swapping the roles of $i\in\{1,2\}$, and this completes the initial analysis of the mixed term.

\textbf{The full derivative.}  We decompose the derivative of \eqref{su_0007} defined by \eqref{su_07},  \eqref{su_11}, \eqref{su_011}, and \eqref{su_18} into the terms
\begin{equation}\label{su_24}\partial_t\mathbb{I}^{\ve,\d,M}_t = \partial_t\mathbb{I}^{\ve,\d,M}_{t,\textrm{par}}+\partial_t\mathbb{I}^{\ve,\d,M}_{t,\textrm{hyp}}+ \partial_t\mathbb{I}^{\ve,\d,M}_{t,\textrm{con}}+\partial_t\mathbb{I}^{\ve,\d,M}_{t,\textrm{cut}},\end{equation}
for the the parabolic term
\begin{align}\label{su_21}
& \partial_t\mathbb{I}^{\ve,\d,M}_{t,\textrm{par}}  =  -2\int_{\mathbb{R}}\int_{(\R^d)^2}\big((p^1_t*\kappa^{\ve,\d})\overline{\kappa}^{\ve,\d}_{2,t}+(p^2_t*\kappa^{\ve,\d})\overline{\kappa}^{\ve,\d}_{1,t}\big)\zeta^M\varphi_R \nonumber
\\  & + 2\int_\mathbb{R}\int_{(\R^d)^3}(\Phi'(\rho^1)+\Phi'(\rho^2))\nabla\rho^1\cdot \nabla\rho^2\overline{\kappa}^{\ve,\d}_{1,t}\overline{\kappa}^{\ve,\d}_{2,t}\zeta^M\varphi_R,
\end{align}
for the hyperbolic term
\begin{align}\label{su_22}
\partial_t\mathbb{I}^{\ve,\d,M}_{t,\textrm{hyp}} & =  2\int_{\mathbb{R}}\int_{(\R^d)^3}g(x,t)\cdot\nabla\rho^1\big(\Phi^\frac{1}{2}(\rho^1)-\Phi^\frac{1}{2}(\rho^2)\big)\overline{\kappa}^{\ve,\d}_{1,t}\overline{\kappa}^{\ve,\d}_{2,t}\zeta^M\varphi_R \nonumber
\\ & +2\int_{\mathbb{R}}\int_{(\R^d)^3}g(x',t)\cdot\nabla\rho^2\big(\Phi^\frac{1}{2}(\rho^2)-\Phi^\frac{1}{2}(\rho^1)\big)\overline{\kappa}^{\ve,\d}_{1,t}\overline{\kappa}^{\ve,\d}_{2,t}\zeta^M\varphi_R,
\end{align}
for the term involving the control
\begin{align}\label{su_022}
\partial_t\mathbb{I}^{\ve,\d,M}_{t,\textrm{con}} & =  2\int_\mathbb{R}\int_{(\R^d)^3}(g(x,t)-g(x',t))\cdot\nabla\rho^1\Phi^\frac{1}{2}(\rho^2)\overline{\kappa}^{\ve,\d}_{1,t}\overline{\kappa}^{\ve,\d}_{2,t}\zeta^M\varphi_R\nonumber
\\ & +2\int_\mathbb{R}\int_{(\R^d)^3}(g(x',t)-g(x,t))\cdot\nabla\rho^2\Phi^\frac{1}{2}(\rho^1)\overline{\kappa}^{\ve,\d}_{1,t}\overline{\kappa}^{\ve,\d}_{2,t}\zeta^M\varphi_R,
\end{align}
and for the term defined by the two cutoffs
\begin{align}\label{su_23}
& \partial_t\mathbb{I}^{\ve,\d,M}_{t,\textrm{cut}} =  \int_\mathbb{R}\int_{\R^d}\big(\Phi^\frac{1}{2}(\rho^1)g\cdot \nabla\rho^1* \overline{\kappa}_{1,t}^{\ve,\d}\big)\big(\sgn^\delta-2\chi^{2,\ve,\d}_t\big)\partial_\eta\zeta^M\varphi_R\nonumber
\\ & + \int_\mathbb{R}\int_{\R^d}\big(\Phi^\frac{1}{2}(\rho^2)g\cdot \nabla\rho^2* \overline{\kappa}_{2,t}^{\ve,\d}\big)\big(\sgn^\delta-2\chi^{1,\ve,\d}_t\big)\partial_\eta\zeta^M\varphi_R\nonumber
\\ & - \int_{\mathbb{R}}\int_{\R^d}\big((p^1_t*\kappa^{\ve,\d})\big(\sgn^\d-2\chi^{2,\ve,\d}_t\big)+(p^2_t*\kappa^{\ve,\d})\big(\sgn^\d-2\chi^{1,\ve,\d}_t\big)\big)\partial_\eta\zeta^M\varphi_R\nonumber
\\ & -\int_\R\int_{\R^d}(\Phi'(\rho^1)\nabla\rho^1*\overline{\kappa}_{1,t}^{\ve,\d})\cdot\nabla\varphi_R(\sgn^\d-2\chi^{2,\ve,\d}_t)\zeta^M \nonumber 
\\  & +\int_\R\int_{\R^d}\big(\Phi^\frac{1}{2}(\rho^1)g*\overline{\kappa}_{1,t}^{\ve,\d}\big)\cdot\nabla\varphi_R(\sgn^\d-2\chi^{2,\ve,\d}_t)\zeta^M\nonumber
\\ & -\int_\R\int_{\R^d}(\Phi'(\rho^2)\nabla\rho^2*\overline{\kappa}_{2,t}^{\ve,\d})\cdot\nabla\varphi_R(\sgn^\d-2\chi^{1,\ve,\d}_t)\zeta^M \nonumber 
\\  & +\int_\R\int_{\R^d}\big(\Phi^\frac{1}{2}(\rho^2)g*\overline{\kappa}_{2,t}^{\ve,\d}\big)\cdot\nabla\varphi_R(\sgn^\d-2\chi^{1,\ve,\d}_t)\zeta^M.
\end{align}
Each of the four terms on the righthand side of \eqref{su_24} will be handled separately.

\textbf{The parabolic terms.}  After adding and subtracting $2(\Phi'(\rho^1)\Phi'(\rho^2))^\frac{1}{2}$ and using that
\[\Phi'(\rho^1)+\Phi'(\rho^2)-2(\Phi'(\rho^1)\Phi'(\rho^2))^\frac{1}{2}=\big((\Phi'(\rho^1))^\frac{1}{2}-(\Phi'(\rho^2))^\frac{1}{2}\big)^2,\]
the parabolic term \eqref{su_21} satisfies
\begin{align}\label{su_25}
& \partial_t\mathbb{I}^{\ve,\d,M}_{t,\textrm{par}} = 4\int_{\mathbb{R}}\int_{(\R^d)^3}(\Phi'(\rho^1)\Phi'(\rho^2))^\frac{1}{2}\nabla\rho^1\cdot \nabla\rho^2\overline{\kappa}^{\ve,\d}_{1,t}\overline{\kappa}^{\ve,\d}_{2,t}\zeta^M\varphi_R\nonumber
\\ &+2\int_{\mathbb{R}}\int_{\R^d}\big((\Phi'(\rho^1))^\frac{1}{2}-(\Phi'(\rho^2))^\frac{1}{2}\big)^2\nabla\rho^1\cdot \nabla\rho^2\overline{\kappa}^{\ve,\d}_{1,t}\overline{\kappa}^{\ve,\d}_{2,t}\zeta^M\varphi_R\nonumber
\\ &  -2\int_{\mathbb{R}}\int_{\R^d}\left(p^1_t\kappa^{\ve,\d}_1\overline{\kappa}^{\ve,\d}_{2,t}+p^2_t\kappa^{\ve,\d}_2\overline{\kappa}^{\ve,\d}_{1,t}\right)\zeta^M\varphi_R.
\end{align}
The definition of the parabolic defect measures, H\"older's inequality, and Young's inequality prove that
\begin{align*}
& 4\int_{\mathbb{R}}\int_{(\R^d)^3}(\Phi'(\rho^1)\Phi'(\rho^2))^\frac{1}{2}\nabla\rho^1\cdot \nabla\rho^2\overline{\kappa}^{\ve,\d}_{1,t}\overline{\kappa}^{\ve,\d}_{2,t}\zeta^M\varphi_R
\\ & \leq 2\int_{\mathbb{R}}\int_{\R^d}\left(p^1_t\kappa^{\ve,\d}_1\overline{\kappa}^{\ve,\d}_{2,t}+p^2_t\kappa^{\ve,\d}_2\overline{\kappa}^{\ve,\d}_{1,t}\right)\zeta^M\varphi_R,
\end{align*}
and it therefore follows from \eqref{su_25} that
\[\partial_t\mathbb{I}^{\ve,\d,M}_{t,\textrm{par}}\leq 2\int_{\mathbb{R}}\int_{\R^d}\big((\Phi'(\rho^1))^\frac{1}{2}-(\Phi'(\rho^2))^\frac{1}{2}\big)^2\nabla\rho^1\cdot \nabla\rho^2\overline{\kappa}^{\ve,\d}_{1,t}\overline{\kappa}^{\ve,\d}_{2,t}\zeta^M\varphi_R.\]
The local $\nicefrac{1}{2}$-H\"older continuity of $\Phi'$, the support of $\kappa^\d$ and of $\zeta^M$, and the local boundedness of $\Phi'$ and $\Phi$ away from zero and infinity on compact subsets of $(0,\infty)$ prove that, for some $c\in(0,\infty)$ depending on $M$,
\[\big((\Phi'(\rho^1))^\frac{1}{2}-(\Phi'(\rho^2))^\frac{1}{2}\big)^2\overline{\kappa}^{\ve,\d}_{1,t}\overline{\kappa}^{\ve,\d}_{2,t}\zeta^M\leq c\d\overline{\kappa}^{\ve,\d}_{1,t}\overline{\kappa}^{\ve,\d}_{2,t}\zeta^M\mathbf{1}_{\{0<\abs{\rho^1-\rho^2}<c\d\}}, \]
and therefore, for some $c\in(0,\infty)$ depending on $M$,
\[\partial_t\mathbb{I}^{\ve,\d,M}_{t,\textrm{par}}\leq c\delta \int_{\mathbb{R}}\int_{(\R^d)^3}\mathbf{1}_{\{0<\abs{\rho^1-\rho^2}<c\d\}}\abs{\nabla\Phi^\frac{1}{2}(\rho^1)}\abs{\nabla\Phi^\frac{1}{2}(\rho^2)}\overline{\kappa}^{\ve,\d}_{1,t}\overline{\kappa}^{\ve,\d}_{2,t}\zeta^M\varphi_R.\]
The boundedness of $\delta \kappa^\delta$ in $\d\in(0,1)$, H\"older's inequality, and Young's inequality prove that, for some $c\in(0,\infty)$ depending on $M$,
\[\limsup_{\ve\rightarrow 0}\left(\partial_t\mathbb{I}^{\ve,\d,M}_{t,\textrm{par}}\right)\leq c\int_{\left\{0<\abs{\rho^1-\rho^2}\leq c\delta\;\;\textrm{and}\;\;(2M)^{-1}<\rho^i<2M\;\;\textrm{for}\;\;i=1,2\right\}}\abs{\nabla\rho^1}^2+\abs{\nabla\rho^2}^2.\]
The local regularity of the $\rho^i$ and the dominated convergence theorem then prove that
\begin{equation}\label{su_27}\limsup_{\d\rightarrow 0}\Big(\limsup_{\ve\rightarrow 0}\Big(\partial_t\mathbb{I}^{\ve,\d,M}_{t,\textrm{par}}\Big)\Big)\leq 0,\end{equation}
which completes the analysis of the parabolic terms.

\textbf{The hyperbolic terms.}  A similar analysis relying on the supports of $\kappa^\d$ and $\zeta^M$, the local Lipschitz continuity of $\Phi$ and therefore $\Phi^\frac{1}{2}$ on $(0,\infty)$, and \eqref{exp_50} prove that the hyperbolic terms \eqref{su_22} satisfy, for some $c\in(0,\infty)$ depending on $M\in(0,\infty)$,
\begin{align*}
& \partial_t\mathbb{I}^{\ve,\d,M}_{t,\textrm{hyp}}
\\ & \leq c\delta\int_{\mathbb{R}}\int_{(\R^d)^3}\mathbf{1}_{\left\{0<\abs{\rho^1-\rho^2}\leq c\delta\;\;\textrm{and}\;\;(2M)^{-1}<\rho^i<2M\;\;\textrm{for}\;\;i=1,2\right\}}\big(\abs{g}\big(\abs{\nabla\rho^1}+\abs{\nabla\rho^2}\big)\big)\overline{\kappa}^{\ve,\d}_{1,t}\overline{\kappa}^{\ve,\d}_{2,t}\zeta^M\varphi_R.
\end{align*}
H\"older's inequality, Young's inequality, and the boundedness of $\d\kappa^\d$ in $\d\in(0,1)$ then prove that
\[\limsup_{\ve\rightarrow 0}\Big(\partial_t\mathbb{I}^{\ve,\d,M}_{t,\textrm{hyp}}\Big)\leq c\int_{\left\{0<\abs{\rho^1-\rho^2}\leq c\delta\;\;\textrm{and}\;\;(2M)^{-1}<\rho^i<2M\;\;\textrm{for}\;\;i=1,2\right\}}\abs{g}^2+\abs{\nabla\rho^1}^2+\abs{\nabla\rho^2}^2,\]
and the $L^2_{t,x}$-integrability of $g$, the local regularity of the $\rho^i$, and the dominated convergence theorem prove that
\begin{equation}\label{su_28}\limsup_{\delta\rightarrow 0}\Big(\limsup_{\ve\rightarrow 0}\Big(\partial_t\mathbb{I}^{\ve,\d,M}_{t,\textrm{hyp}}\Big)\Big)\leq 0,\end{equation}
which completes the analysis of the hyperbolic terms.

\textbf{The control terms.}  The support of $\kappa^\d$ and of $\zeta^M$ and the local boundedness of $\Phi$ on $(0,\infty)$ prove that the control terms \eqref{su_022} satisfy, for some $c\in(0,\infty)$ depending on $M$,
\[\partial_t\mathbb{I}^{\ve,\d,M}_{t,\textrm{con}} \leq c\int_\mathbb{R}\int_{(\R^d)^3}\abs{g(x',t)-g(x,t)}\left(\abs{\nabla\rho^1}+\abs{\nabla\rho^2}\right)\overline{\kappa}^{\ve,\d}_{1,t}\overline{\kappa}^{\ve,\d}_{2,t}\zeta^M\varphi_R. \]
It then follows similarly to the above from the $L^2_{t,x}$-integrability of $g$, the local regularity of the $\rho^i$, and the dominated convergence theorem that
\[\limsup_{\ve\rightarrow 0}\Big( \partial_t\mathbb{I}^{\ve,\d,M}_{t,\textrm{con}} \Big)\leq 0,\]
and, therefore,
\begin{equation}\label{su_29}\limsup_{\d\rightarrow 0}\Big(\limsup_{\ve\rightarrow 0}\Big( \partial_t\mathbb{I}^{\ve,\d,M}_{t,\textrm{con}}\Big) \Big)\leq 0,\end{equation}
which completes the analysis of the control terms.

\textbf{The cutoff terms.}  It follows from the definition of the parabolic defect measures, the computation leading to the treatment of the martingale term in \cite[Theorem~4.7]{FG21} which relies on the inequality $\abs{2\chi^{i,\ve,\d}-\sgn^\delta}\leq 1$ and the equality, for example,
\begin{align*}
& \lim_{\ve,\d\rightarrow 0} \int_\mathbb{R}\int_{\R^d}\big(\Phi^\frac{1}{2}(\rho^1)g\cdot \nabla\rho^1* \overline{\kappa}_{1,t}^{\ve,\d}\big)\big(\sgn^\delta-2\chi^{2,\ve,\d}_t\big)\partial_\eta\zeta^M\varphi_R
\\ & =\int_{\R}\int_{\R^d}\Phi^\frac{1}{2}(\rho^1)g\cdot\nabla\rho^1\sgn(\rho^1-\rho^2)\partial_\eta\zeta^M(\rho^1)\varphi_R,
\end{align*}
that, after passing to the limit $\ve,\d\rightarrow 0$ in \eqref{su_23}, for $\Phi_M$ the unique function satisfying $\Phi_M(0)=0$ and $\Phi_M'(\xi)=\zeta_M(\xi)\Phi'(\xi)$,
\begin{align}\label{rel_101}
& \limsup_{\d\rightarrow 0}\Big(\limsup_{\ve \rightarrow 0}\big(\partial_t\mathbb{I}^{\ve,\d,M}_{t,\textrm{cut}}\big)\Big)
\\ \nonumber & \leq  \int_{\R^d}\abs{g}\abs{\partial_\eta\zeta^M(\rho^1)\Phi^\frac{1}{2}(\rho^2)\nabla\rho^1-\partial_\eta\zeta^M(\rho^2)\Phi^\frac{1}{2}(\rho^2)\nabla\rho^2}\varphi_R + \int_{\mathbb{R}}\int_{\R^d} \abs{\partial_\eta\zeta^M}\varphi_R(p^1_t+p^2_t)
\\ \nonumber & \quad + \int_{\R^d}\abs{\Phi_M(\rho^1)-\Phi_M(\rho^2)}\Delta\varphi_R
\\ \nonumber & \quad +\int_{\R^d}\abs{\zeta_M(\rho^1)\Phi^\frac{1}{2}(\rho^1)-\zeta_M(\rho^2)\Phi^\frac{1}{2}(\rho^2)}\abs{g}\abs{\nabla\varphi_R},
\end{align}
where we observe that in this case we preserve the exact equality in the term involving $\Delta\varphi_R$.  We do this to treat the case $d=2$ below.  In the case $d\geq 3$, as in the proof of Theorem~\ref{thm_rel_unique} we first pass to the limit $R\rightarrow\infty$, for which we have using the definition of $\Phi_R$, Proposition~\ref{prop_rel_int}, H\"older's inequality, and the dominated convergence theorem that
\begin{align*}
& \limsup_{R\rightarrow\infty}\int_{\R^d}\abs{\Phi_M(\rho^1)-\Phi_M(\rho^2)}\abs{\Delta\varphi_R}
\\ & \leq \limsup_{R\rightarrow\infty} \Big(\int_{B_{2R}\setminus B_R} \abs{\Phi_M(\rho^1)-\Phi_M(\rho^2)}^\frac{2_*}{2}\Big)^\frac{2}{2_*}\Big(\int_{\R^d}\abs{\Delta\varphi_R}^\frac{d}{2}\Big)^\frac{2}{d}
\\ & \leq c\limsup_{R\rightarrow\infty} \Big(\int_{B_{2R}\setminus B_R} \abs{\Phi_M(\rho^1)-\Phi_M(\rho^2)}^\frac{2_*}{2}\Big)^\frac{2}{2_*}=0.
\end{align*}
In the case $d=1$ we use the boundedness of $\Phi_M$ to argue that
\[\limsup_{R\rightarrow\infty}\int_{\R^d}\abs{\Phi_M(\rho^1)-\Phi_M(\rho^2)}\abs{\Delta\varphi_R}\leq c\limsup_{R\rightarrow\infty}\int_{\R^d}\abs{\Delta\varphi_R}=\limsup_{R\rightarrow\infty} \frac{c}{R}=0,\]
and in the case $d=2$ we use the local regularity of the $\rho^i$ implied by the nondegeneracy of $\Phi$ and the relative entropy estimate to write
\[\nabla\Phi_M(\rho^i) = \Phi'_M(\xi)\nabla\rho^i = \frac{\Phi'_M(\xi)\Phi^\frac{1}{2}(\xi)}{\Phi'(\xi)}\nabla\Phi^\frac{1}{2}(\rho^i)=\zeta_M(\xi)\Phi^\frac{1}{2}(\xi)\nabla\Phi^\frac{1}{2}(\rho^i),\]
for which we have after integrating by parts and applying H\"older's inequality that, for some $c\in(0,\infty)$ depending on $M$ but independent of $R$,
\begin{align*}
& \int_{\R^d}\abs{\Phi_M(\rho^1)-\Phi_M(\rho^2)}\Delta\varphi_R  = \int_{\R^d}\sgn(\Phi_M(\rho^1)-\Phi_M(\rho^2))\big(\nabla\Phi_M(\rho^1)-\nabla\Phi_M(\rho^2)\big)\cdot \nabla\varphi_R
\\ & \leq c\int_{\R^d}\big(\abs{\Phi^\frac{1}{2}(\rho^1)}+\abs{\nabla\Phi^\frac{1}{2}(\rho^2)}\big)\abs{\nabla\varphi_R} \leq  c\Big(\int_{B_{2R}\setminus B_R}\big(\abs{\nabla\Phi^\frac{1}{2}(\rho^1)}^2+\abs{\nabla\Phi^\frac{1}{2}(\rho^2)}^2\big)\Big)^\frac{1}{2}\Big(\int_{\R^d}\abs{\nabla\varphi_R}^2\Big)
\\ & \leq c\Big(\int_{B_{2R}\setminus B_R}\big(\abs{\nabla\Phi^\frac{1}{2}(\rho^1)}^2+\abs{\nabla\Phi^\frac{1}{2}(\rho^2)}^2\big)\Big)^\frac{1}{2}.
\end{align*}
It then follows from the finiteness of the relative entropy in Definition~\ref{skel_sol_def_rel} and the dominated convergence theorem that this term vanishes in the limit $R\rightarrow\infty$.  For the final term of \eqref{rel_101}, we observe similarly using the $(L^2_{x,t})^d$-integrability of $g$ that, if $d\geq 3$,
\begin{align*}
& \limsup_{R\rightarrow\infty} \int_{\R^d}\abs{\zeta_M(\rho^1)\Phi^\frac{1}{2}(\rho^1)-\zeta_M(\rho^2)\Phi^\frac{1}{2}(\rho^2)}\abs{g}\abs{\nabla\varphi_R}
\\ & \leq \limsup_{R\rightarrow\infty}\Big(\int_{B_{2R}\setminus B_R}\abs{\zeta_M(\rho^1)\Phi^\frac{1}{2}(\rho^1)-\zeta_M(\rho^2)\Phi^\frac{1}{2}(\rho^2)}^{2_*}\Big)^\frac{1}{2_*}\Big(\int_{B_{2R}\setminus B_R}\abs{g}^2\Big)^\frac{1}{2}\Big(\int_{\R^d}\abs{\nabla\phi_R}^d\Big)^\frac{1}{d}
\\ & \leq c\limsup_{R\rightarrow\infty}\Big(\int_{B_{2R}\setminus B_R}\abs{\zeta_M(\rho^1)\Phi^\frac{1}{2}(\rho^1)-\zeta_M(\rho^2)\Phi^\frac{1}{2}(\rho^2)}^{2_*}\Big)^\frac{1}{2_*}\Big(\int_{B_{2R}\setminus B_R}\abs{g}^2\Big)^\frac{1}{2}=0,
\end{align*}
where the final inequality follows from a straightforward modification of Proposition~\ref{prop_rel_int}.  The cases $d=1$ and $d=2$ are handled similarly to the above, where in both cases it suffices to bound $\zeta_M(\rho^i)\Phi^\frac{1}{2}(\rho^i)$ in $L^\infty$ and then apply H\"older's inequality and the dominated convergence theorem. Therefore, by the monotone convergence theorem,
\begin{align*}
& \limsup_{R\rightarrow\infty}\Big(\limsup_{\d\rightarrow 0}\Big(\limsup_{\ve \rightarrow 0}\big(\partial_t\mathbb{I}^{\ve,\d,M}_{t,\textrm{cut}}\big)\Big)\Big)
\\ & \leq  \int_{\R^d}\abs{g}\abs{\partial_\eta\zeta^M(\rho^1)\Phi^\frac{1}{2}(\rho^2)\nabla\rho^1-\partial_\eta\zeta^M(\rho^2)\Phi^\frac{1}{2}(\rho^2)\nabla\rho^2} + \int_{\mathbb{R}}\int_{\R^d} \abs{\partial_\eta\zeta^M}(p^1_t+p^2_t).
\end{align*}
We now proceed using Proposition~\ref{prop_rel_unique} and the $(L^2_{x,t})^d$-integrability of $g$, which prove for the set $A_{M,i}=\{M^{-1}\leq \xi\leq 2M^{-1}\}\cup\{M\leq\xi\leq 2M\}$ that, for some $c\in(0,\infty)$,
\begin{align}\label{rel_102}
& \liminf_{M\rightarrow\infty}\int_{\R^d}\abs{g}\abs{\partial_\eta\zeta^M(\rho^1)\Phi^\frac{1}{2}(\rho^2)\nabla\rho^1-\partial_\eta\zeta^M(\rho^2)\Phi^\frac{1}{2}(\rho^2)\nabla\rho^2} + \int_{\mathbb{R}}\int_{\R^d} \abs{\partial_\eta\zeta^M}(p^1_t+p^2_t)
\\ \nonumber & \leq \liminf_{M\rightarrow\infty} c\sum_{i=1}^2\big(\norm{g\mathbf{1}_{A_{M,i}(\rho^i)}}^2_{L^2_{t,x}}+\int_{\mathbb{R}}\int_{\R^d} \frac{1}{\xi}\big(\mathbf{1}_{\{M^{-1}\leq \xi\leq 2M^{-1}\}}+\mathbf{1}_{\{M\leq\xi\leq 2M\}}\big)p^i_t\big)=0,
\end{align}
which completes the analysis of the cutoff term.

\textbf{The conclusion.}  Returning to \eqref{su_0}, we have from \eqref{su_27}, \eqref{su_28}, \eqref{su_29}, \eqref{rel_102}, from $\rho_0^1=\rho_0^2$, and from the monotone convergence theorem that
\[\max_{t\in[0,T]}\norm{\rho^1(\cdot,t)-\rho^2(\cdot,t)}_{L^1(\R^d)}=0,\]
which completes the proof.\end{proof}

We conclude this section with the proof that a renormalised kinetic solution with initial data in $\Ent_{\Phi,\gamma}(\R^d)$ admits a continuous representative mapping into $L^1_{\textrm{loc}}(\R^d)$ and into $\mathcal{M}(\R^d)$.

\begin{proposition}\label{rel_L1-continuity}  Let $T,\gamma\in(0,\infty)$, let $\Phi\in\textrm{C}([0,\infty))\cap\textrm{C}^1_{\textrm{loc}}((0,\infty))$ satisfy Assumption~\ref{as_unique}, let $g\in (L^2_{t,x})^d$, and let $\rho_0\in \Ent_{\Phi,\gamma}(\R^d)$.  Then, the renormalised kinetic solution $\rho$ of \eqref{intro_1} in the sense of Definition~\ref{skel_sol_def_rel} with initial data $\rho_0$ and control $g$ has a representative in $\C([0,T];L^1_{\textrm{loc}}(\R^d))\cap \mathcal{C}$.  \end{proposition}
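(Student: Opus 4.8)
\textbf{Proof plan for Proposition~\ref{rel_L1-continuity}.}  The plan is to upgrade the distributional time-regularity encoded in Definition~\ref{skel_sol_def_rel} into honest continuity, by first working with a velocity cutoff and then removing it, exactly as in the uniqueness argument.  First I would fix $M\in[2,\infty)$, take the cutoff $\zeta^M$ and spatial cutoff $\varphi_R$ from the proof of Theorem~\ref{thm_rel_unique}, and test the kinetic equation against $\psi(x,\xi)=\varphi_R(x)\zeta^M(\xi)$ (approximated by admissible test functions vanishing at $\xi=0$).  Writing $\chi$ for the kinetic function of $\rho$ and using $\int_\R\chi(x,\xi,t)\zeta^M(\xi)\dxi = \Lambda^M(\rho(x,t))$ for the primitive $\Lambda^M$ with $(\Lambda^M)'=\zeta^M$, this expresses $t\mapsto \int_{\R^d}\Lambda^M(\rho(x,t))\varphi_R(x)\dx$ — for $t$ outside the null set $\mathcal N$ — as a sum of terms each of which is an absolutely continuous function of $t$, since the integrands ($\Phi'(\rho)\nabla\rho$, $\Phi^{1/2}(\rho)\nabla\rho\, g$, $\Phi^{1/2}(\rho)g$ against $\nabla_x\psi$ or $\partial_\xi\psi$, the initial datum, and the kinetic measure term $\int p\,\partial_\xi\psi$) are integrable on $\R^d\times[0,T]$ on the support of the cutoffs: the support of $\zeta^M$ keeps $\rho$ in a compact subset of $(0,\infty)$ where $\Phi,\Phi'$ are bounded away from $0$ and $\infty$, so the relative entropy estimate and Proposition~\ref{prop_rel_int} give local $L^2_{t,x}$ control of $\nabla\rho$, and Proposition~\ref{prop_rel_unique} gives $\int\int\int \xi^{-1}p<\infty$ hence finiteness of the cutoff defect term.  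Therefore $t\mapsto \langle \varphi_R\zeta^M, \chi_t\rangle$ has a continuous representative on $[0,T]$, and since this holds for a countable dense family of $\varphi_R$ (and all $M$), a diagonal argument gives a representative of $\rho$ that is weakly continuous into $L^1_{\mathrm{loc}}$ after removing the cutoffs.

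The second step promotes weak continuity to strong $L^1_{\mathrm{loc}}$-continuity.  Here I would use the standard kinetic trick: for a $\{0,1\}$-valued kinetic function, $\int_\R \int_{B_R} |\chi_t - \chi_s|\,\varphi_R\zeta^M = \int_\R\int_{B_R}(\chi_t+\chi_s - 2\chi_t\chi_s)\varphi_R\zeta^M$, and the map $t\mapsto \int\chi_t\chi_s\varphi_R\zeta^M$ is continuous by the same computation applied to the product (as in \eqref{su_011}--\eqref{su_18}, but now with one factor frozen at time $s$), while $t\mapsto\int\chi_t\,\mathrm{sgn}\,\varphi_R\zeta^M = \int\Lambda^M(\rho_t)\varphi_R$ is continuous from Step~1.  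Evaluating at $t=s$ shows $\|\chi_t-\chi_s\|\to 0$ as $t\to s$ on the cutoff region, i.e. $\rho_t\to\rho_s$ in $L^1(B_R\cap\{M^{-1}\le\rho\le M\})$; combined with uniform integrability coming from $\sup_t\int\Psi_{\Phi,\gamma}(\rho_t)<\infty$ (Proposition~\ref{rel_prop0}/\ref{prop_rel_unique}) and the superlinearity of $\Psi_{\Phi,\gamma}$ away from $\gamma$, one removes the velocity cutoff and obtains $\rho\in \C([0,T];L^1_{\mathrm{loc}}(\R^d))$, which by the identification of $L^1_{\mathrm{loc},+}$ with absolutely continuous measures in $\mathcal M$ (Section~\ref{sec: prelim}) gives $\rho\in\mathcal C$ as well; local $L^1$-convergence against $C_c$ test functions is exactly vague convergence.

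I expect the main obstacle to be the interchange of limits when removing the two cutoffs $\zeta^M$ and $\varphi_R$ simultaneously with the continuity statement: one must ensure the continuity modulus of $t\mapsto \langle\varphi_R\zeta^M,\chi_t\rangle$ is controlled uniformly enough in $M,R$ that the limit is still continuous, and that the tail contributions $\int_{\{\rho\le M^{-1}\}\cup\{\rho\ge M\}}\Psi_{\Phi,\gamma}(\rho_t)$ are small uniformly in $t$.  The uniform-in-$t$ smallness is precisely where the relative entropy estimate of Proposition~\ref{rel_prop0} (which is a $\sup_t$ bound) is indispensable, together with Assumption~\ref{as_unique}'s bound \eqref{skel_continuity_3} controlling the growth of $\Phi/\Phi'$ that governs the defect-measure tail via Proposition~\ref{prop_rel_unique}.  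A secondary technical point is the justification of testing against $\varphi_R\zeta^M$ when $\zeta^M$ does not vanish at $\xi=0$ for the lower plateau value $2M^{-1}$; this is handled exactly as in Theorem~\ref{thm_rel_unique}, using the choice of mollification scale $\d<(2M)^{-1}$ and the fact that $\rho>(2M)^{-1}$ on the relevant support, so the origin $\xi=0$ plays no role.
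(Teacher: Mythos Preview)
Your proposal is correct and follows essentially the same approach as the paper, which defers the argument to \cite[Theorem~5.29]{FG21}: the weak-to-strong upgrade via the kinetic identity $\abs{\chi_t-\chi_s}=\chi_t+\chi_s-2\chi_t\chi_s$, combined with velocity and spatial cutoffs removed using the uniform-in-time relative entropy bound and the decay of the defect measure at infinity in $\xi$, is precisely the machinery of that reference adapted to the present $\Ent_{\Phi,\gamma}$ setting. The deduction of $\rho\in\mathcal{C}$ from strong Fr\'echet $L^1_{\textrm{loc}}$-continuity is likewise exactly what the paper records.
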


\begin{proof}  The proof of the existence of a representative in $\C([0,T];L^1_{\textrm{loc}}(\R^d))$ is virtually identical to \cite[Theorem~5.29]{FG21}, and the existence of a continuous representative in  $\mathcal{C}$ is then a consequence of the strong Fr\'echet continuity of $L^1_{\textrm{loc}}(\R^d)$ due to the fact that the topology of $\mathcal{M}$ is generated by integration against compactly supported continuous functions.\end{proof}

\section{Equivalence of Kinetic and Weak Solutions}\label{section_equivalence}

In this section, we will prove the equivalence of weak solutions in the sense of Definition~\ref{def_sol_re} and renormalised kinetic solutions in the sense of Definition~\ref{skel_sol_def_rel} to the skeleton equation \eqref{intro_1} for initial data in the space $\Ent_{\Phi,\gamma}(\R^d)$ under the following assumption on $\Phi$.  We then use this fact to establish the existence of renormalised kinetic solutions for initial data in $\Ent_{\Phi,\gamma}(\R^d)$.

\begin{assumption}\label{as_equiv} Let $\Phi\in\textrm{C}([0,\infty))\cap\textrm{C}^1_{\textrm{loc}}((0,\infty))$ satisfy Assumption~\ref{as_unique}.  Assume that $\Phi$ satisfies one of the following two conditions.
\begin{enumerate}
\item There exist $0<a\le A<\infty$ such that, for all $\xi\in (0,\infty)$, \begin{equation}\label{eq: DH hyp on Phi} a\le \Phi'(\xi)\le A. \end{equation}
\item We have that $\Phi^\frac{1}{2}\colon[0,\infty)\rightarrow[0,\infty)$ is convex, there exists $c\in(0,\infty)$ such that
\begin{equation}\label{skel_continuity_6} \sup_{\{\xi\geq 1\}}\abs{\frac{\Phi(\xi+1)}{\Phi(\xi)}}\leq c,\end{equation}
and for every $M\in(0,1)$ there exists $c\in(0,\infty)$ depending on $M$ such that
\begin{equation}\label{skel_continuity_7} \sup_{\{\xi\geq M\}}\abs{\frac{\Phi^\frac{1}{2}(\xi)}{\Phi'(\xi)}}\leq c.\end{equation}
\end{enumerate}
\end{assumption}

\begin{remark} Condition (1) could be replaced in our arguments by a modification of condition (1)
 in \cite[Assumption~10]{FehGes19}. In this case, we would require that, for some $p\in[2,\infty)$, \begin{equation} \label{eq: old condition}
	\Big(\frac{\Phi^{1/2}(\xi)}{\Phi'(\xi)}\Big)^p \le c(1+\xi),
\end{equation} and that $\xi\mapsto \Phi^{1/2}(\xi)$ satisfies the defective concavity property in Lemma \ref{lemma: defective concavity} below. \end{remark}

\begin{theorem}\label{thm_equiv}  Let $T,\gamma\in(0,\infty)$, let $\Phi\in\textrm{C}([0,\infty))\cap\textrm{C}^1_{\textrm{loc}}((0,\infty))$ satisfy Assumptions~\ref{as_unique} and \ref{as_equiv}, let $g\in (L^2_{t,x})^d$, and let $\rho_0\in \Ent_{\Phi,\gamma}(\R^d)$.  Then, $\rho$ is a renormalised kinetic solution in the sense of Definition~\ref{skel_sol_def_rel} if and only if $\rho$ is a weak solution in the sense of Definition~\ref{def_sol_re}.
\end{theorem}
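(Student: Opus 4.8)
\textbf{Proof proposal for Theorem~\ref{thm_equiv}.}  The plan is to prove the two implications separately, with the genuinely substantive content lying in the passage from a weak solution to a renormalised kinetic solution, since the reverse direction is essentially a formal manipulation.

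First I would treat the easier direction: a renormalised kinetic solution is a weak solution.  Given a renormalised kinetic solution $\rho$ with kinetic measure $p$, one tests the kinetic equation of Definition~\ref{skel_sol_def_rel} against test functions of the form $\psi(x,\xi) = \zeta(x)\beta(\xi)$ where $\beta$ is a smooth truncation that equals $1$ on $[0,M]$ and is supported on $[0,2M]$.  Integrating in $\xi$ and using $\int_\R \chi(x,\xi,t)\partial_\xi(\text{stuff})\,d\xi$ identities, together with $\overline{\chi}(\rho,\xi) = \mathbf{1}_{\{0<\xi<\rho\}}$, recovers the weak formulation of Definition~\ref{def_sol_re} up to remainder terms supported on $\{\rho \geq M\}$.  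These remainders are controlled by the vanishing-at-infinity property $p(\R^d\times[M,M+1]\times[0,T])\to 0$, by Proposition~\ref{prop_rel_unique} (which gives $\int\int\int \xi^{-1}p < \infty$ and hence tail smallness), and by the relative entropy estimate together with Proposition~\ref{prop_rel_int}, which controls $\Phi^{1/2}(\rho)-\Phi^{1/2}(\gamma)$ in $L^2_t L^{2_*}_x$ and hence the flux terms on the tail.  Letting $M\to\infty$ yields the weak formulation; the flux identities $\nabla\Phi(\rho) = \Phi'(\rho)\nabla\rho$ and $\nabla\Phi^{1/2}(\rho)$ being locally $L^1$ (via the remark after Definition~\ref{def_sol_re}) make all these integrals well-defined.

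The harder direction is: a weak solution is a renormalised kinetic solution.  Here I would follow the template of \cite[Proposition~20 and the equivalence argument]{FehGes19}, but with the $L^p$-based estimates systematically replaced by the relative-entropy estimates of Propositions~\ref{rel_prop0}, \ref{prop_rel_int}, and \ref{prop_rel_unique}.  The main steps are: (i) mollify the weak solution in space, apply the chain rule to obtain an equation for $S(\rho^\epsilon)$ for convex renormalisations $S$, and identify the defect term that arises as $\epsilon\to 0$ — this produces the candidate kinetic measure $p$ via $p = \lim_\epsilon \delta_0(\xi-\rho^\epsilon)\Phi'(\rho^\epsilon)|\nabla\rho^\epsilon|^2$ in an appropriate weak-* sense, and one checks $\delta_0(\xi-\rho)\Phi'(\rho)|\nabla\rho|^2 \leq p$ by lower semicontinuity; (ii) verify that $p$ is locally finite and vanishes at infinity, which is where Assumption~\ref{as_equiv} enters — under condition (1), $a\le\Phi'\le A$, the defect measure is comparable to $|\nabla\rho|^2$ and the relative entropy estimate of Proposition~\ref{rel_prop0}(3) bounds $\int\int|\nabla\Phi^{1/2}(\rho)|^2$, giving tail control; under condition (2), the convexity of $\Phi^{1/2}$ plus \eqref{skel_continuity_6}–\eqref{skel_continuity_7} is exactly what is needed to run the commutator estimate $\Phi(\rho)^\epsilon \approx \Phi(\rho^\epsilon)$ in the spirit of \cite[Theorem~14]{FehGes19} — or alternatively the defective concavity of Lemma~\ref{lemma: defective concavity} together with \eqref{eq: old condition}; (iii) pass the mollification to the limit in the kinetic equation itself, using that $\rho^\epsilon \to \rho$ in $L^1_{\mathrm{loc}}([0,T]\times\R^d)$ (from the relative entropy estimate and the continuity-in-time of Proposition~\ref{rel_L1-continuity}) and that the flux terms converge by Proposition~\ref{prop_rel_int}.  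I expect the main obstacle to be step (ii), specifically the commutator/renormalisation estimate controlling the defect measure near $\xi=\infty$: the weak solution's fluxes are only locally $L^1$ and not globally integrable, so one cannot simply invoke the finite-volume arguments of \cite{FehGes19}.  The resolution will be to localise in $\xi$ with the truncations $\zeta^M$, use \eqref{skel_continuity_3} (valid under Assumption~\ref{as_unique}) to bound $\Phi/\Phi'$ linearly and hence transfer the relative entropy estimate of Proposition~\ref{prop_rel_unique} into a bound on $\int\int\int \xi^{-1}p$, and only then remove the $\xi$-cutoff; the spatial non-integrability is handled by the Sobolev/interpolation estimate of Proposition~\ref{prop_rel_int} exactly as in the uniqueness proof of Theorem~\ref{thm_rel_unique}.

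Finally, I would record the corollary that renormalised kinetic solutions exist for all initial data in $\Ent_{\Phi,\gamma}(\R^d)$: combine the existence of weak solutions from Proposition~\ref{rel_exist} (valid under Assumption~\ref{ap_assume}, which is implied here once one notes Assumptions~\ref{as_unique} and~\ref{as_equiv} give the relevant structural properties) with the weak-implies-kinetic direction just proved.  Together with the uniqueness Theorem~\ref{thm_rel_unique}, this gives well-posedness in the renormalised kinetic class, which is what the subsequent large-deviations analysis requires.
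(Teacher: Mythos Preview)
Your overall architecture matches the paper's: the kinetic-to-weak direction is dismissed as routine, and the weak-to-kinetic direction proceeds by spatially mollifying $\rho$, computing $\partial_t\Psi(x,\rho^\ve)$ for $\Psi(x,\xi)=\int_0^\xi\psi(x,\zeta)\,d\zeta$ with $\psi\in C^\infty_c(\R^d\times(0,\infty))$, and passing to the limit $\ve\to 0$; the obstruction is indeed a commutator estimate for terms like $(\partial_\xi\psi)(x,\rho^\ve)\nabla\rho^\ve\cdot(\Phi^{1/2}(\rho)g*\kappa^\ve)$ on the set where $\rho$ is large. You also correctly flag that under condition~(1) of Assumption~\ref{as_equiv} the defective concavity Lemma~\ref{lemma: defective concavity} is the replacement for the convexity/concavity hypothesis of \cite{FehGes19}.

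Where your resolution diverges from the paper is in the specific mechanism for handling the large-$\rho$ region. The paper does \emph{not} localise in $\xi$ with the truncations $\zeta^M$, does not invoke Proposition~\ref{prop_rel_unique} (which would be circular here, since that proposition presupposes a kinetic solution with measure $p$), and does not need the spatial Sobolev machinery of Proposition~\ref{prop_rel_int}---the test function $\psi$ is already compactly supported in $x$, so there is no spatial non-integrability to handle. Instead, the paper exploits the compact support of $\psi$ in $\xi$ (say $\text{supp}(\psi)\subset\R^d\times[0,M]$) to decompose the domain via the level sets $A_k=\{\Phi^{1/2}(\rho)\ge\Phi^{1/2}(M)+k\}$; on $A_k^c$ the argument of \cite[Lemma~4.5]{FehGes19} applies verbatim, while on $A_k$ one proves the analogue of \cite[Lemma~4.4]{FehGes19}, namely Lemma~\ref{lemma: 4.4'}, which shows the contribution vanishes as $\ve\to 0$ for $k$ chosen large depending only on the defective-concavity constant $\vartheta$ and $M$. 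The point of defective concavity is precisely to relate the ``bad'' set (where $\rho$ is large but $\rho^\ve\le M$ due to the support of $\psi$) to a set where $\Phi^{1/2}(\rho)$ differs substantially from its mollification, whose measure is then controlled by $\|\nabla\Phi^{1/2}(\rho)\|_{L^2}^2$. Your framing of step~(ii) as ``verify $p$ is locally finite and vanishes at infinity'' is slightly misplaced: the substantive work is the commutator estimate enabling the $\ve\to 0$ limit, after which the kinetic measure and its properties fall out as in \cite{FehGes19}.
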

 
The proof of the equivalence is based on the following observations.  It is readily seen that a renormalised kinetic solution is a weak solution.  For the converse statement, fix $g\in (L^2_{t,x})^d$ and let $\rho$ be any weak solution in the sense of Definition~\ref{def_sol_re}. We fix a smooth, compactly supported function $\kappa$ with $\int \kappa(x)dx=1$, and for $\epsilon>0$ let $\kappa^\epsilon:=\epsilon^{-d}\kappa(\cdot/\epsilon)$. In order to show that $\rho$ is also a kinetic solution in the sense of Definition~\ref{skel_sol_def_rel}, we fix a compactly supported test function $\psi \in C^\infty_{\rm c}(\mathbb{R}^d\times(0,\infty))$, let $\Psi(x,\xi):=\int_0^\xi \psi(x,\zeta)d\zeta$, and fix $M$ such that $\text{supp}(\psi)\subset \mathbb{R}^d\times[0,M]$.  Writing $\rho^\epsilon$ for the convolution of $\rho$ with $\kappa^\epsilon$, we will study the evolution of $\int_{\mathbb{R}^d} \Psi(x,\rho^\epsilon(x,t))$ and take the limit $\epsilon\to 0$.  We have that the composition $\Psi(x,\rho^\ve)$ is a distributional solution of the equation
\begin{equation}\label{exp_1}\partial_t\Psi(x,\rho^\ve) = -\psi(x,\rho^\ve)(2\Phi^\frac{1}{2}(\rho)\nabla\Phi^\frac{1}{2}(\rho)*\nabla\kappa^\ve)+\psi(x,\rho^\ve)(\Phi^\frac{1}{2}(\rho)g*\nabla \kappa^\ve),\end{equation}
and we treat both terms on the righthand side of \eqref{exp_1} identically using only the $L^2_{t,x}$-integrability of $g$ and $\nabla\Phi^\frac{1}{2}(\rho)$ respectively.  We therefore consider only the second term, for which we have
\begin{align}\label{exp_2}
&  \nonumber \int_0^t\int_{\R^d}\psi(x,\rho^\ve)(x,\rho^\ve)(\Phi^\frac{1}{2}(\rho)g*\nabla\kappa^\ve)
\\  & = \int_0^t\int_{\R^d}(\nabla_x\psi)(x,\rho^\ve)\cdot (\Phi^\frac{1}{2}(\rho)g*\kappa^\ve)+ \int_0^t\int_{\R^d}(\partial_\xi\psi)(x,\rho^\ve)\nabla\rho^\ve\cdot(\Phi^\frac{1}{2}(\rho)g*\kappa^\ve).
\end{align}
The $L^1_{t,x}$-integrability of the product $\Phi^\frac{1}{2}(\rho)g$ and boundedness of $\nabla_x\psi$ make it relatively straightforward to pass to the $\ve\rightarrow 0$ limit in the first term on the righthand side of \eqref{exp_2}.  The second term is more difficult even if $\rho$ is $H^1_x$-valued, since the unboundedness of $\Phi^\frac{1}{2}(\rho)$ means that $(\Phi^\frac{1}{2}(\rho)g*\kappa^\ve)$ does not converge $L^2_{t,x}$-strongly to the limit $\Phi^\frac{1}{2}(\rho)g$.  In fact, since our analysis holds in an arbitrary dimension, the limit effectively holds in $L^1_{t,x}$-strongly but not better.

In \cite{FehGes19} this difficulty was handled using the following observation.  The compact support of the test function $\psi$ yields an $L^\infty$-bound for the convolution $\rho^\ve$, and the analysis worked to transfer the $L^\infty$-bound for the convolution to an $L^\infty$-bound for $\rho$ itself, and by doing so it established the $L^2$-strong convergence of the convolutions away from the vanishing set on which $\rho$ is large.  The argument is complicated by the fact that $\rho$ is itself irregular, and it is therefore necessary to exploit the regularity implied by the relative entropy estimate.

In the case of hypothesis (2) in Assumption \ref{as_equiv}, the argument is essentially the same as the corresponding proof in \cite{FehGes19}, using the compact support of test functions. However, it does not appear that the nonlinearity $\varphi$ produced by the zero-range process is covered by either of the hypotheses corresponding to \cite[Assumption~10]{FehGes19}. Precisely, thanks to the lower bound, the assumption (\ref{eq: old condition}) holds with $p=2$, but it does not appear to be possible, in general, to verify the concavity of $\xi\mapsto \Phi^{1/2}(\xi)$, which is required for the argument in \cite{FehGes19}. Instead, we first give a preliminary calculation, showing how (\ref{eq: DH hyp on Phi}) implies a `defective concavity' property. \begin{lemma}\label{lemma: defective concavity} Fix $\kappa^\epsilon$ as above. Under \eqref{eq: DH hyp on Phi}, there exists an absolute constant $\vartheta$ such that, for all functions $u\in L^1_{\mathrm{loc}}(\mathbb{R}^d)$, it holds that   any $x \in \mathbb{R}^d$, \begin{equation} \label{eq: defective concavity}
	\left(\Phi^{1/2}(u)\right)^\epsilon(x) \le \vartheta \Phi^{1/2}(u^\epsilon(x))
\end{equation} where $u^\epsilon, (\Phi^{1/2}(u))^\epsilon$ denote the convolutions of $u$, $\Phi^{1/2}(u)$ with the mollifier $\kappa^\epsilon$. 
\end{lemma}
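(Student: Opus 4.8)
The plan is to exploit the uniform ellipticity \eqref{eq: DH hyp on Phi} to show that $\Phi^{1/2}$ is, up to a bounded multiplicative error, a concave function, whence the inequality follows from Jensen's inequality applied to the probability measure $\kappa^\epsilon(x-\cdot)\,dy$. The key analytic input is the following claim about the single-variable function $\Phi^{1/2}$: under $a \le \Phi'(\xi) \le A$, there is an absolute constant $\vartheta = \vartheta(a,A)$ such that $\Phi^{1/2}$ satisfies the ``defective superadditivity/concavity'' estimate
\[
\Phi^{1/2}(\xi) + \Phi^{1/2}(\zeta) \le \vartheta\, \Phi^{1/2}\Big(\tfrac{\xi+\zeta}{2}\Big) \cdot 2, \qquad \text{or more usefully} \qquad \int \Phi^{1/2}(u(y))\,d\mu(y) \le \vartheta\, \Phi^{1/2}\Big(\int u(y)\,d\mu(y)\Big)
\]
for any probability measure $\mu$. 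First I would establish the pointwise comparison $\sqrt{a\xi} \le \Phi^{1/2}(\xi) \le \sqrt{A\xi}$ for all $\xi \ge 0$ (integrating $a \le \Phi' \le A$ from $0$, using $\Phi(0)=0$). The function $\xi \mapsto \sqrt{\xi}$ is concave, so $\int \sqrt{u}\,d\mu \le \sqrt{\int u\,d\mu}$ by Jensen. Chaining these: for $u^\epsilon(x) = \int u(y)\kappa^\epsilon(x-y)\,dy$,
\[
\big(\Phi^{1/2}(u)\big)^\epsilon(x) = \int \Phi^{1/2}(u(y))\,\kappa^\epsilon(x-y)\,dy \le \sqrt{A}\int \sqrt{u(y)}\,\kappa^\epsilon(x-y)\,dy \le \sqrt{A}\sqrt{u^\epsilon(x)} \le \sqrt{A/a}\;\Phi^{1/2}(u^\epsilon(x)),
\]
so that \eqref{eq: defective concavity} holds with $\vartheta = \sqrt{A/a}$. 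This is essentially the entire argument; the constant $\vartheta$ depends only on the ellipticity constants, which under (A1)--(A2) are the fixed $a,A$ from \eqref{eq: DH hyp on Phi'}.

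The steps, in order, would be: (i) record that $\Phi(0)=0$ together with \eqref{eq: DH hyp on Phi} gives $a\xi \le \Phi(\xi) \le A\xi$, hence $\sqrt{a}\,\xi^{1/2} \le \Phi^{1/2}(\xi) \le \sqrt{A}\,\xi^{1/2}$; (ii) observe that $\kappa^\epsilon(x-\cdot)\,dy$ is a probability measure on $\mathbb{R}^d$ since $\int\kappa = 1$ and $\kappa \ge 0$ (note: I should check the excerpt's hypotheses on $\kappa$ — it is called ``a smooth, compactly supported function with $\int \kappa(x)dx=1$''; I would additionally need $\kappa \ge 0$, which is standard for a mollifier and I will assume it, as is implicit throughout); (iii) apply Jensen's inequality to the concave function $t \mapsto \sqrt{t}$ on $[0,\infty)$ against this measure, valid for any $u \in L^1_{\mathrm{loc}}$, nonnegative, so that $u^\epsilon(x)$ is finite for a.e.\ $x$ (indeed every $x$ by compact support of $\kappa$ and local integrability); (iv) combine (i) and (iii) to get the chain of inequalities displayed above. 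No integrability obstruction arises because $u$ is nonnegear and $\Phi^{1/2}(u) \le \sqrt{A}\,u^{1/2} \in L^2_{\mathrm{loc}} \subset L^1_{\mathrm{loc}}$, so all convolutions are classically defined.

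The only genuine subtlety — and the ``main obstacle,'' though it is a mild one — is ensuring the argument is truly dimension-free and requires no regularity of $u$: this is exactly why the proof goes through $\sqrt{\cdot}$ rather than trying to use concavity of $\Phi^{1/2}$ itself (which need not hold, as the excerpt emphasizes). The point is that $\Phi^{1/2}$ is sandwiched between two multiples of the genuinely concave function $\xi^{1/2}$, so all the nonlinearity's irregularity is absorbed into the constant $\vartheta$. One should also double-check the edge case where $u^\epsilon(x) = 0$: then $u \equiv 0$ a.e.\ on the support of $\kappa^\epsilon(x-\cdot)$, so $\Phi^{1/2}(u) = 0$ there as well (since $\Phi(0)=0$), and both sides of \eqref{eq: defective concavity} vanish. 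I would write the statement so that the inequality holds for every $x$ (or for a.e.\ $x$, which suffices for the application in the equivalence proof of Theorem~\ref{thm_equiv}).
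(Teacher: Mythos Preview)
Your proof is correct and follows exactly the same route as the paper: bound $\Phi^{1/2}(\xi)$ above and below by $\sqrt{A}\,\xi^{1/2}$ and $\sqrt{a}\,\xi^{1/2}$ via $\Phi(0)=0$ and \eqref{eq: DH hyp on Phi}, then apply Jensen's inequality to the concave function $\xi\mapsto\xi^{1/2}$ against the probability measure $\kappa^\epsilon(x-\cdot)\,dy$, arriving at the same constant $\vartheta=\sqrt{A/a}$.
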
\begin{proof}
	Thanks to (\ref{eq: DH hyp on Phi}), for all $\xi\ge 0$,\begin{equation}
		\label{eq: upper lower bd} a\xi \le \Phi(\xi)\le A\xi.
	\end{equation} We now have, for any $x$, $$ (\Phi^{1/2}(u))^\epsilon(x) \le A^{1/2} (u^{1/2})^\epsilon(x)
 \le A^{1/2} (u^\epsilon)^{1/2}(x) \le A^{1/2}a^{-1/2} \Phi(u^\epsilon(x))^{1/2} $$ where the second inequality follows from Jensen's inequality, and the first and third inequalities follow from the upper and lower bounds of (\ref{eq: upper lower bd}) respectively. This proves the assertion with $\vartheta:=\sqrt{A/a}$.
\end{proof}	  We now return to the argument preceding \eqref{exp_1} to show how the argument of \cite[Theorem 4.3]{FehGes19} may be modified to allow only defective concavity. In preparation for taking the limit, we form the following decomposition of $\mathbb{R}^d\times[0,T]$. For $k\ge 1$, set \begin{equation}
	\label{eq: Ak def} A_k:=\left\{(y,t)\in \mathbb{R}^d\times [0,T]: \Phi^{1/2}(\rho(y,t))\ge \Phi^{1/2}(M)+k\right\}.
\end{equation}We write $A_k^\mathrm{c}$ for the complement $A_k^\mathrm{c}:=(\mathbb{R}^d\times[0,T])\setminus A_k$ and $1_{k+1, k}$ for the indicator function of $A_k\setminus A_{k+1}$. Given any $k$, we may integrate \eqref{exp_1} and use the decomposition \begin{equation} \begin{split} \label{eq: big decomp} &\left.\int_{\mathbb{R}^d}\Psi(x,\rho^\epsilon(x,r))dx\right|_{r=0}^t\\ & =\int_0^t \int_{(\mathbb{R}^d)^2}2\Phi^{1/2}(\rho(y,s))\nabla \Phi^{1/2}(\rho(y,s))\cdot \nabla_x \kappa^\epsilon(y-x)\psi(x, \rho^\epsilon(x,s))1_{A_k}(y,t)dx dy ds \\ &-\int_0^t \int_{(\mathbb{R}^d)^2}g(y,t)\Phi^{1/2}(\rho(y,t))\cdot \nabla_x \kappa^\epsilon(y-x)\psi(x, \rho^\epsilon(x,s))1_{A_k}(y,t)dx dy ds\\& +\int_0^t \int_{(\mathbb{R}^d)^2}2\Phi^{1/2}(\rho(y,s))\nabla \Phi^{1/2}(\rho(y,s))\cdot \nabla_x \kappa^\epsilon(y-x)\psi(x,\rho^\epsilon(x,s))1_{A_k^\mathrm{c}}(y,s) dx dy ds \\ & -\int_0^t \int_{(\mathbb{R}^d)^2}g(y,t)\Phi^{1/2}(\rho(y,t))\cdot \nabla_x \kappa^\epsilon(y-x)\psi(x, \rho^\epsilon(x,s))1_{A_k^\mathrm{c}}(y,t)dx dy ds.\end{split}\end{equation} We wish to take the limit $\epsilon\to 0$. An identical argument to \cite[Lemma 4.5]{FehGes19} shows that, for any choice of $k\ge 1$, the final two terms converge to \begin{equation}  \label{eq: terms same as FG}\begin{split} \int_0^t \int_{\mathbb{R}^d} (2\nabla \Phi^{1/2}(\rho)-g)\Phi^{1/2}(\rho)\Big(\partial_x \psi(x,\rho)+\frac{2\Phi(\rho)}{\Phi'(\rho)}\nabla \Phi^{1/2}(\rho)\partial_\xi \psi(x,\rho)\Big)dx ds.\end{split} \end{equation} In order to complete the same argument as \cite{FehGes19}, one must show that the first two terms on the right-hand side of (\ref{eq: big decomp}) vanish as $\epsilon\to 0$, which will require a careful choice of $k$. The following replaces \cite[Lemma 4.4]{FehGes19}.   Once this is proven, we apply \eqref{eq: terms same as FG} with this choice of $k$ to take the limits of every term on the right-hand side of \eqref{eq: big decomp} and complete the argument as in \cite{FehGes19}.  \begin{lemma}\label{lemma: 4.4'} Suppose that $\Phi\in  C([0,\infty))\cap C^1((0,\infty))$ satisfies \eqref{eq: DH hyp on Phi}. Then, for every $F\in L^2(\mathbb{R}^d\times[0,T]; \mathbb{R}^d)$ and $\psi\in C^\infty_c(\mathbb{R}^d\times \mathbb{R})$, for sufficiently large $k=K(\vartheta, \psi)$, \begin{equation}\label{eq: new commutator conclusion} \limsup_{\epsilon\to 0}\sup_{t\le T}\Big|\int_0^t \int_{(\mathbb{R}^d)^2} F(y,s)\Phi^{1/2}(\rho(y,s))\cdot \nabla_x \kappa^\epsilon(y-x)\psi(x, \rho^\epsilon(x,t))1_{{A_k}}(y,s) dy dx ds \Big| =0 .\end{equation} \end{lemma}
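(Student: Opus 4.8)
\textbf{Proof proposal for Lemma~\ref{lemma: 4.4'}.}

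The plan is to mimic the structure of \cite[Lemma 4.4]{FehGes19}, but to replace the pointwise $L^\infty$-bound on $\rho$ coming from genuine concavity by the defective concavity estimate \eqref{eq: defective concavity} together with a careful accounting of the level sets $A_k$. First I would move the $x$-derivative off the mollifier: integrating by parts in $x$, the integrand involving $\nabla_x\kappa^\ve(y-x)$ becomes, up to a sign,
\begin{equation*}
\int_0^t\int_{(\R^d)^2} F(y,s)\Phi^{1/2}(\rho(y,s))\cdot\kappa^\ve(y-x)\big((\nabla_x\psi)(x,\rho^\ve)+(\partial_\xi\psi)(x,\rho^\ve)\nabla\rho^\ve(x,s)\big)1_{A_k}(y,s)\,dx\,dy\,ds,
\end{equation*}
so it suffices to bound the two resulting pieces. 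The key geometric point is this: on the support of $\psi(x,\cdot)$ we have $\rho^\ve(x,s)\leq M$, and hence by \eqref{eq: defective concavity}, $(\Phi^{1/2}(\rho))^\ve(x,s)\leq \vartheta\,\Phi^{1/2}(\rho^\ve(x,s))\leq \vartheta\,\Phi^{1/2}(M)$. This means that on the support of $\psi$, the \emph{convolved} quantity $(\Phi^{1/2}(\rho))^\ve$ stays bounded by $\vartheta\Phi^{1/2}(M)$. Now decompose $y$-space according to whether $(y,s)\in A_k$, i.e.\ $\Phi^{1/2}(\rho(y,s))\geq \Phi^{1/2}(M)+k$. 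When $\ve$ is small, the contribution to $(\Phi^{1/2}(\rho))^\ve(x,s)$ from $y$ with $|y-x|\leq \ve$ lying in $A_k$ is itself bounded above by $\vartheta\Phi^{1/2}(M)$; choosing $k=K(\vartheta,\psi)$ strictly larger than $\vartheta\Phi^{1/2}(M)-\Phi^{1/2}(M)$ (say $K = (\vartheta+1)\Phi^{1/2}(M)$ to be safe), a measure-theoretic argument forces the mass of $\{y:|y-x|\leq\ve\}\cap A_k$, weighted by $\kappa^\ve$, to be small: if a positive proportion of that ball contributed $\Phi^{1/2}(\rho(y,s))\geq \Phi^{1/2}(M)+k$, the convolution could not remain $\leq \vartheta\Phi^{1/2}(M)$. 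Quantitatively, on $\supp\psi(\cdot,\cdot)$,
\begin{equation*}
\int_{\R^d}\kappa^\ve(y-x)\Phi^{1/2}(\rho(y,s))1_{A_k}(y,s)\,dy\leq (\Phi^{1/2}(\rho))^\ve(x,s)\leq \vartheta\Phi^{1/2}(M),
\end{equation*}
and this uniform bound on the truncated convolution, combined with dominated convergence, is what will drive the limit.

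The main steps in order are: (i) integrate by parts in $x$ and reduce to the two terms above; (ii) for the first term, containing $(\nabla_x\psi)(x,\rho^\ve)$ which is bounded with compact $x$-support, estimate by Cauchy--Schwarz in $(y,s)$ using $F\in L^2_{t,x}$ together with $\|\Phi^{1/2}(\rho)1_{A_k}\,\kappa^\ve\|$; here the integrability of $\Phi^{1/2}(\rho)$ over $A_k$ follows from finiteness of the relative entropy via Proposition~\ref{prop_rel_int} (the super-level set $A_k$ has $\Phi^{1/2}(\rho)-\Phi^{1/2}(\gamma)$ in $L^{2_*}_{t}L^{2_*}_x$-type norms, hence the mass on $A_k$ tends to $0$ as $k\to\infty$ and, for fixed large $k$, the localized convolution is small as $\ve\to 0$ by the defective-concavity pinning above); (iii) for the second term, containing $(\partial_\xi\psi)(x,\rho^\ve)\nabla\rho^\ve$, use that $\nabla\rho^\ve = (\nabla\rho)^\ve$ is only locally $L^1$ a priori, but on $\supp\psi$ we have $\rho^\ve\leq M$, so $\rho^\ve$ ranges in a compact subset of $[0,M]$, and we may write $\nabla\rho^\ve = \tfrac{2\Phi^{1/2}(\rho)}{\Phi'(\rho)}\nabla\Phi^{1/2}(\rho)$ convolved, where $\Phi'\geq a>0$ and $\Phi^{1/2}\leq A^{1/2}M^{1/2}$ give an $L^2_{t,x}$ bound; pairing this with the pinned, vanishing mass $\|\Phi^{1/2}(\rho)1_{A_k}\kappa^\ve\|_{L^2}$ again produces a product of an $L^2$-bounded factor with a factor tending to $0$; (iv) take $\limsup_{\ve\to0}$ and then observe the bound is uniform in $t\in[0,T]$ by the same estimates applied to the running supremum.

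The hard part will be step (iii): controlling the interaction of $\nabla\rho^\ve$ — which genuinely concentrates near the level set $\{\rho\approx M\}$ where $\psi$ is still active — against the $A_k$-truncated factor $\Phi^{1/2}(\rho)1_{A_k}$. The point is that on $A_k$ the solution is \emph{large}, $\rho\geq \Phi^{-1}((\Phi^{1/2}(M)+k)^2)$, which by \eqref{eq: DH hyp on Phi} is bounded below by $c_k$ with $c_k\to\infty$, so the sets where $\psi(x,\rho^\ve)\neq 0$ (forcing $\rho^\ve\leq M$) and where $(y,s)\in A_k$ with $|y-x|\leq\ve$ are separated in $\rho$-value by a gap that cannot be bridged by a convolution of width $\ve$ without violating the defective-concavity inequality; making this precise requires carefully choosing $k$ in terms of $\vartheta$ and $M$ only (so that $k>(\vartheta-1)\Phi^{1/2}(M)$, independent of $\ve$ and of $\rho$), then showing the localized convolution $\int\kappa^\ve(y-x)1_{A_k}(y,s)\,dy\to 0$ uniformly on $\supp\psi$. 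I expect this pinning argument, rather than any of the routine $L^2$ estimates, to be the crux; once it is in hand, the remaining bounds are standard Cauchy--Schwarz plus dominated convergence, exactly as in \cite[Lemma 4.4]{FehGes19}.
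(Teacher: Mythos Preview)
Your approach diverges from the paper's at the very first step, and the divergence introduces a gap that I don't see how to close with the tools you list.

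\textbf{The gap.} After you integrate by parts in $x$, the term you call ``hard'' in step (iii) contains the factor $(\partial_\xi\psi)(x,\rho^\ve)\nabla\rho^\ve(x,s)$, and you propose to bound $\nabla\rho^\ve$ in $L^2$ on $\{(x,s):\rho^\ve(x,s)\le M\}$ by writing $\nabla\rho^\ve = \big(\tfrac{2\Phi^{1/2}(\rho)}{\Phi'(\rho)}\nabla\Phi^{1/2}(\rho)\big)*\kappa^\ve$ and using $\Phi^{1/2}\le A^{1/2}M^{1/2}$.  That inequality requires $\rho(y)\le M$ for the integration variable $y$ \emph{inside} the convolution, but all you know is $\rho^\ve(x)\le M$ at the \emph{outer} point $x$.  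The value of $\rho(y)$ for $|y-x|\le\ve$ can be arbitrarily large even when $\rho^\ve(x)\le M$, so the asserted $L^2$ bound on $\nabla\rho^\ve$ is unjustified.  This is not a technicality: it is precisely the obstruction the paper singles out in the paragraph after \eqref{exp_2}, and your integration by parts has simply reproduced the second term of \eqref{exp_2} restricted to $A_k$.  A second, related issue: your ``pinning'' estimate $\int\kappa^\ve(y-x)\Phi^{1/2}(\rho(y,s))1_{A_k}(y,s)\,dy\le\vartheta\Phi^{1/2}(M)$ gives a bound that is uniform in $\ve$ and small in $k$, but it does \emph{not} vanish as $\ve\to 0$ for the fixed $k=K(\vartheta,\psi)$ that the lemma requires.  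So the ``factor tending to $0$'' in your step (iii) is not actually tending to $0$.

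\textbf{What the paper does instead.}  The paper does \emph{not} integrate by parts.  It keeps $\nabla_x\kappa^\ve$ and reduces (following \cite[Lemma~4.4]{FehGes19}) to bounding $\ve^{-2}\int\Phi(\rho(y))\,|\ve\nabla\kappa^\ve(y-x)|\,|\psi(x,\rho^\ve)|\,1_{A_k}(y)$.  The mechanism that produces the crucial $\ve^2$ smallness is a \emph{dyadic decomposition} $A_k=\bigcup_{n\ge k}(A_n\setminus A_{n+1})$ together with a Chebyshev/Poincar\'e estimate: on $(A_n\setminus A_{n+1})\cap\mathrm{supp}\,\psi(\cdot+x,\rho^\ve(\cdot+x))$ the defective concavity \eqref{eq: defective concavity} forces $|\Phi^{1/2}(\rho)(y)-(\Phi^{1/2}(\rho))^\ve(y+x)|\gtrsim n$ once $k$ is chosen so that $n-(\vartheta-1)(\Phi^{1/2}(M)+1)\ge n/2$, and the Lebesgue measure of the latter set is controlled by $c(\ve^2+|x|^2)n^{-2}\|\nabla\Phi^{1/2}(\rho)\|_{L^2}^2$.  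Summing over $n\ge k$ and integrating against $|\ve\nabla\kappa^\ve(x)|$ yields the required $O(\ve^2)$.  The role of the choice $k=K(\vartheta,\psi)$ is exactly to absorb the defect $(\vartheta-1)$; your intuition that $k$ should exceed roughly $(\vartheta-1)\Phi^{1/2}(M)$ is correct, but it enters the argument at this dyadic step, not through the convolution-mass bound you wrote.
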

	\begin{proof}
		Following the cited lemma and using the compactness of the support $\text{supp}(\psi)$, it is sufficient to show that we may choose $k=K(\vartheta, \psi)$ such that \begin{equation} \label{eq: aim to show comm est}
			\limsup_{\epsilon\to 0} \epsilon^{-2}\int_0^T \int_{(\mathbb{R}^d)^2}\Phi(\rho(y,t))|\epsilon \nabla \kappa^\epsilon(y-x)||\psi(x, \rho^\epsilon(x,t))|1_{A_k}(y,t)dxdydt<\infty.
		\end{equation} We now fix $k\ge 1$ to be chosen later. Repeating the arguments leading to \cite[Equation (4.14)]{FehGes19}, \begin{equation}
			\begin{split} &\int_0^T \int_{(\mathbb{R}^d)^2} \Phi(\rho(y,t))|\epsilon\nabla_x \kappa^\epsilon(y-x)||\psi(x, \rho^\epsilon(x,t))|1_{A_k}(y,t)dydxdt \\& \hspace{1cm} \le \sum_{n\ge k} \int_0^T \int_{(\mathbb{R}^d)^2}(\Phi^{1/2}(M)+n+1)^2|\epsilon \nabla_x \kappa^\epsilon(x)||\psi(y+x, \rho^\epsilon(y+x,t))|1_{k,k+1}(y,t) dy dx dt. \end{split}
		\end{equation} The subsequent stages of the argument in \cite{FehGes19} use (standard) concavity, and we must now modify the argument. Letting $\vartheta\ge 1$ be the constant in Lemma \ref{lemma: defective concavity}, we modify the definition of the set $B^\epsilon_{n,x}$ from \cite[Equation (4.15)]{FehGes19}:\begin{equation}
B^\epsilon_{n,x}:=\Big\{(y,t)\in \mathbb{R}^d\times [0,T]: \big|\Phi^{1/2}(\rho(y,t))-\vartheta^{-1}\big(\Phi^{1/2}(\rho)\big)^\epsilon(y+x,t)\big|1_{n,n+1}(y,t)\ge n\Big\}.
\end{equation} First, we check that this preserves the relationship between $A_n$ and $B^\epsilon_{n,x}$ from the corresponding objects in \cite{FehGes19}, namely that \begin{equation}
	\label{eq: A and B} (A_n\setminus A_{n+1})\cap \text{supp}(\psi(\cdot +x, \rho^\epsilon(\cdot +x, \cdot)) \subset B^\epsilon_{n,x}.
\end{equation} For every $(y,t)\in (A_n\setminus A_{n+1})\cap \text{supp}(\psi(\cdot +x, \rho^\epsilon(\cdot +x, \cdot))$,  (\ref{eq: defective concavity}) implies that $$ \vartheta^{-1}(\Phi^{1/2}(\rho))^\epsilon(y+x,t) \le \Phi^{1/2}(\rho^\epsilon(y+x,t)) $$ whence $$ \Phi^{1/2}(\rho(y,t))-\vartheta^{-1}\left(\Phi^{1/2}(\rho)\right)^\epsilon(y+x,t) \ge \Phi^{1/2}(\rho(y,t))-\Phi^{1/2}(\rho^\epsilon(x+y,t))\ge n $$ and we conclude that (\ref{eq: A and B}) holds.   It now follows that, as in the original proof, for some $c>0$, \begin{align} \label{eq: introduce BEN}
	& \int_0^T \int_{(\mathbb{R}^d)^2} \Phi(\rho(y,t))|\epsilon \nabla_x \kappa^\epsilon(y-x)||\psi(x,\rho^\epsilon(x,t))|1_{A_k}(y,t) dy dx dt \\ \nonumber & \le c\int_{\mathbb{R}^d}\sum_{n\ge k} \big(\Phi^{1/2}(M)+n+1\big)^2|B^\epsilon_{n,x}||\epsilon \nabla \kappa^\epsilon(x)| dx.
\end{align} Next, we must estimate the Lebesgue measures of $B^\epsilon_{n,x}$; as a result of the new definition, the argument again diverges from \cite{FehGes19}, and this will ultimately force a choice of $k$. If $(y,t)\in B^\epsilon_{n,x}$, it holds that \begin{equation}\begin{split}|\vartheta^{-1}\Phi^{1/2}(\rho(y,t)) - \vartheta^{-1}(\Phi^{1/2}(\rho))^\epsilon(y+x,t)| & \ge n-(1-\vartheta^{-1})\Phi^{1/2}(\rho(y,t)) \\ & \ge n-(1-\vartheta^{-1})(\Phi^{1/2}(M)+n+1) \end{split} \end{equation} whence \begin{equation}
	|\Phi^{1/2}(\rho(y,t))-(\Phi^{1/2}(\rho))^\epsilon(y+x,t)| \ge n-(\theta-1)(\Phi^{1/2}(M)+1).
\end{equation} We now choose $k$, depending only on $\vartheta, M$ so that, for all $n\ge k$, the right-hand side of the previous display is at least $\frac{n}{2}$. For this choice of $k$ and for $n\ge k$, we have the inclusion \begin{equation}
	B^\epsilon_{n,x}\subset \Big\{(y,t)\in \mathbb{R}^d\times [0,T]: \big|\Phi^{1/2}(\rho(y,t))-\big(\Phi^{1/2}(\rho)\big)^\epsilon(y+x,t)\big|1_{n,n+1}(y,t)\ge \frac{n}{2}\Big\}.
\end{equation} The measure of the sets appearing on the right-hand side can be estimated by repeating the arguments of \cite[Equations (4.19-4.21)]{FehGes19} and, assembling everything and varying the constant $c$, \begin{equation}
	\label{eq: commutator conclusion new} \sum_{n\ge k} \big(\Phi^{1/2}(M)+k+1\big)^2|B^\epsilon_{n,x}| \le c(\epsilon^2+|x|^2)\big\|\nabla \Phi^{1/2}\big\|^2_{L^2(\mathbb{R}^d\times [0,T])}.
\end{equation} Returning to \eqref{eq: introduce BEN}, the integration of the $x$-dependent prefactor against the kernel $\epsilon |\nabla \kappa^\epsilon|$ produces $ \int_{\mathbb{R}^d} (\epsilon^2+|x|^2)\epsilon|\nabla \kappa^\epsilon(x)| dx \le c\epsilon^2$, and (\ref{eq: aim to show comm est}) is proven.   \end{proof}

\begin{proposition}\label{thm_rel_exist}  Let $T,\gamma\in(0,\infty)$, let $\Phi\in\textrm{C}([0,\infty))\cap\textrm{C}^1_{\textrm{loc}}((0,\infty))$ satisfy Assumptions~\ref{as_unique} and \ref{as_equiv}, let $g\in (L^2_{t,x})^d$, and let $\rho_0\in \Ent_{\Phi,\gamma}(\R^d)$.  Then, there exists a a renormalised kinetic solution of \eqref{intro_1} with control $g$ and initial data $\rho_0$.\end{proposition}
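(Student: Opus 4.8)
\textbf{Proof proposal for Proposition~\ref{thm_rel_exist}.}  The plan is to combine the existence of weak solutions already established in Proposition~\ref{rel_exist} with the equivalence of weak and renormalised kinetic solutions established in Theorem~\ref{thm_equiv}.  The entire content is bookkeeping: checking that the hypotheses under which Proposition~\ref{rel_exist} produces a weak solution are compatible with, and imply, the hypotheses of Theorem~\ref{thm_equiv}.

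First I would observe that Assumption~\ref{ap_assume} is implied by Assumptions~\ref{as_unique} and \ref{as_equiv}: the vanishing at zero, strict monotonicity, local integrability, and growth conditions in Assumption~\ref{ap_assume} all follow either from $\Phi(0)=0$ together with $\Phi'>0$ on $(0,\infty)$, the local $\nicefrac12$-H\"older continuity of $\Phi'$, and the structural bounds \eqref{skel_continuity_3}, \eqref{eq: DH hyp on Phi}, \eqref{skel_continuity_6}, \eqref{skel_continuity_7}.  In particular, under hypothesis (1) of Assumption~\ref{as_equiv} one has $a\xi\le\Phi(\xi)\le A\xi$, so \eqref{5_00} holds with $\theta=\nicefrac12$; under hypothesis (2) the convexity of $\Phi^\frac12$ gives the required comparison, and one verifies the growth and integrability conditions from \eqref{skel_continuity_6}--\eqref{skel_continuity_7} exactly as in \cite{FehGes19}.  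Thus Proposition~\ref{rel_exist} applies and produces, for every $\rho_0\in\Ent_{\Phi,\gamma}(\R^d)$ and $g\in(L^2_{t,x})^d$, a nonnegative weak solution $\rho$ of \eqref{rel_eq}---equivalently \eqref{intro_1}---in the sense of Definition~\ref{def_sol_re} with $\eta=0$, satisfying the relative entropy estimate of Proposition~\ref{rel_prop0}.

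Next I would invoke Theorem~\ref{thm_equiv}, which under Assumptions~\ref{as_unique} and \ref{as_equiv} asserts that $\rho$ is a weak solution in the sense of Definition~\ref{def_sol_re} if and only if $\rho$ is a renormalised kinetic solution in the sense of Definition~\ref{skel_sol_def_rel}.  Applying the ``only if'' direction to the weak solution just constructed produces a renormalised kinetic solution with the same control $g$ and initial data $\rho_0$.  The relative entropy estimate of Definition~\ref{skel_sol_def_rel}(1), namely $\int_0^T\int_{\R^d}\abs{\nabla\Phi^\frac12(\rho)}^2<\infty$, is inherited from the relative entropy estimate of Proposition~\ref{rel_prop0}, which Proposition~\ref{rel_exist} guarantees.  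This completes the proof.

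The only place where any genuine care is needed---and hence the part I would write out in detail---is the verification that hypothesis (1) of Assumption~\ref{as_equiv} (the two-sided bound \eqref{eq: DH hyp on Phi} on $\Phi'$, which is the case relevant to the zero-range nonlinearity $\varphi$) does not collide with the additional boundedness/Lipschitz conditions imposed in Proposition~\ref{rel_prop0} on the approximating $\sigma$ and $\Phi'$: there $\sigma$ and $\Phi'$ are required bounded and Lipschitz only at the level of the regularised equation \eqref{rel_1}, and Proposition~\ref{rel_exist} already removes these restrictions by a diagonal limiting argument.  Since Theorem~\ref{thm_equiv} and Proposition~\ref{rel_exist} are both stated for $\Phi$ satisfying the relevant assumptions without any such auxiliary boundedness, no further approximation is required here, and the proposition follows directly by chaining the two cited results.
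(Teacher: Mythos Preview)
Your approach is correct and is exactly the paper's own proof, which reads in its entirety: ``The proof is an immediate consequence of Proposition~\ref{rel_exist} and Theorem~\ref{thm_equiv}.'' Your additional bookkeeping---checking that Assumptions~\ref{as_unique} and \ref{as_equiv} imply Assumption~\ref{ap_assume} so that Proposition~\ref{rel_exist} may be invoked---is more explicit than the paper's terse one-line proof and is a reasonable point to verify, since the statement of Proposition~\ref{thm_rel_exist} does not itself list Assumption~\ref{ap_assume}.
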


\begin{proof}  The proof is an immediate consequence of Proposition~\ref{rel_exist} and Theorem~\ref{thm_equiv}.\end{proof}

We conclude this section with the proof of weak-strong continuity.  Precisely, we will prove that a weakly convergent sequence of controls and initial data induces a strongly convergent sequence of solutions.

\begin{proposition}\label{rel_weak_strong}  Let $T,\gamma\in(0,\infty)$ and let $\Phi\in\textrm{C}([0,\infty))\cap\textrm{C}^1_{\textrm{loc}}((0,\infty))$ satisfy Assumptions~\ref{ap_assume}, \ref{as_unique}, and \ref{as_equiv}.  Then, for every $\rho_{0,n},\rho_0\in \Ent_{\Phi,\gamma}(\R^d)$ and $g_n,g\in (L^2_{t,x})^d$ that satisfy, as $n\rightarrow\infty$,
\[\sup_{n\in\N}\int_{\R^d}\Psi_{\Phi,\gamma}(\rho_{0,n})<\infty\;\;\textrm{and}\;\;\rho_{0,n}\rightharpoonup \rho_0\;\; \textrm{weakly-* in}\;\;L^1_x\;\;\textrm{and}\;\;g_n\rightharpoonup g\;\;\textrm{weakly in}\;\;(L^2_{t,x})^d,\]
the renormalised kinetic solutions $\rho_n$ with controls $g_n$ and with initial data $\rho_{0,n}$ satisfy that, as $n\rightarrow\infty$, for every $R\in(0,\infty)$,
\[\rho_n\rightarrow \rho\;\;\textrm{strongly in}\;\;L^1([0,T];L^1(B_R)),\]
for $\rho$ the unique weak solution of the skeleton equation with control $g$ and initial data $\rho_0$.
\end{proposition}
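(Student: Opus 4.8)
The plan is to combine the a priori estimates of Proposition~\ref{rel_prop0} with the uniqueness theorem (Theorem~\ref{thm_rel_unique}) and the equivalence of solution notions (Theorem~\ref{thm_equiv}) to pass to the limit along the sequence $(\rho_n)$. First I would record the uniform bounds. By the relative entropy estimate of Proposition~\ref{rel_prop0} (which applies to renormalised kinetic solutions via Theorem~\ref{thm_equiv} and Proposition~\ref{prop_rel_unique}), together with the hypothesis $\sup_n \int_{\R^d}\Psi_{\Phi,\gamma}(\rho_{0,n})<\infty$ and the weak convergence $g_n\rightharpoonup g$ in $(L^2_{t,x})^d$ (which gives $\sup_n\|g_n\|_{L^2_{t,x}}<\infty$), we obtain
\[
\sup_{n}\Big(\sup_{t\in[0,T]}\int_{\R^d}\Psi_{\Phi,\gamma}(\rho_n(x,t))+\int_0^T\int_{\R^d}|\nabla\Phi^{1/2}(\rho_n)|^2\Big)<\infty.
\]
From the weak formulation in Definition~\ref{def_sol_re} (using Theorem~\ref{thm_equiv} to work with weak solutions) and the identity $\nabla\Phi(\rho_n)=2\Phi^{1/2}(\rho_n)\nabla\Phi^{1/2}(\rho_n)$, together with the interpolation estimate of Proposition~\ref{prop_rel_int} to control $\Phi^{1/2}(\rho_n)-\Phi^{1/2}(\gamma)$ in $L^2_t L^{2_*}_x$, one obtains a uniform bound on $\partial_t\rho_n$ in $L^1([0,T];H^{-s}(B_R))$ for suitable $s$ and every $R$. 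This is the input for a compactness argument.

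Next I would extract strong compactness locally. The uniform $\dot H^1$-bound on $\Phi^{1/2}(\rho_n)$ combined with the time-regularity bound and the Aubin--Lions--Simon lemma gives that $\Phi^{1/2}(\rho_n)$ is precompact in $L^2([0,T];L^2(B_R))$ for every $R$; since $\Phi^{1/2}$ is a homeomorphism of $[0,\infty)$ (it is strictly increasing and continuous under Assumption~\ref{as_unique}), and using the relative entropy bound to rule out escape of mass to the set where $\rho_n$ is large, we upgrade this to $\rho_n\to\rho$ strongly in $L^1([0,T];L^1(B_R))$ along a subsequence, for some nonnegative limit $\rho$ with $\sup_t\int\Psi_{\Phi,\gamma}(\rho)<\infty$ and $\nabla\Phi^{1/2}(\rho)\in L^2_{t,x}$ by lower semicontinuity. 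Passing to the limit in the weak formulation of Definition~\ref{def_sol_re} requires checking that $\nabla\Phi(\rho_n)=2\Phi^{1/2}(\rho_n)\nabla\Phi^{1/2}(\rho_n)\rightharpoonup 2\Phi^{1/2}(\rho)\nabla\Phi^{1/2}(\rho)$ weakly in $L^1_{\mathrm{loc}}$ (strong $L^2$-convergence of $\Phi^{1/2}(\rho_n)$ on compacts times weak $L^2$-convergence of $\nabla\Phi^{1/2}(\rho_n)$, the latter following from the uniform bound and identification of the weak limit via the strong $L^1$ convergence of $\rho_n$), and that $\sigma_n(\rho_n)g_n=\Phi^{1/2}(\rho_n)g_n\rightharpoonup\Phi^{1/2}(\rho)g$ (strong local convergence of $\Phi^{1/2}(\rho_n)$ against weak convergence of $g_n$), and finally that $\rho_{0,n}\rightharpoonup\rho_0$ handles the initial data term. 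Hence $\rho$ is a weak solution of \eqref{rel_eq} with control $g$ and initial data $\rho_0$ in the sense of Definition~\ref{def_sol_re}, which by Theorem~\ref{thm_equiv} is a renormalised kinetic solution, and by Theorem~\ref{thm_rel_unique} this solution is unique. Since the limit is independent of the subsequence, the full sequence converges: $\rho_n\to\rho$ strongly in $L^1([0,T];L^1(B_R))$ for every $R$.

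The main obstacle is the weak-strong limit passage in the product $\Phi^{1/2}(\rho_n)g_n$ and in $\Phi^{1/2}(\rho_n)\nabla\Phi^{1/2}(\rho_n)$, because $\Phi^{1/2}(\rho_n)$ is only locally strongly convergent and not globally, and because $\rho_n$ has no uniform $L^\infty$ bound, so the strong convergence of $\Phi^{1/2}(\rho_n)$ on a fixed ball $B_R$ has to be paired carefully with the weak $L^2_{t,x}$-bounds. The relative entropy estimate of Proposition~\ref{rel_prop0} and the interpolation bound of Proposition~\ref{prop_rel_int} are exactly what control the tails: they give that, on the set $\{\rho_n\ge M\}$, the contributions to all the nonlinear terms are small uniformly in $n$ as $M\to\infty$, so the limit passage reduces to the region $\{\rho_n\le M\}$ where $\Phi^{1/2}$ is Lipschitz and the strong local convergence of $\rho_n$ does the job. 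A secondary technical point is justifying the uniform time-regularity estimate in the low-regularity, whole-space setting; here I would test the equation against compactly supported functions and use the interpolation of Proposition~\ref{prop_rel_int} together with the uniform entropy bound, mirroring the regularity-in-time estimate of Proposition~\ref{rel_prop0} but with $\eta=0$ and only local control.
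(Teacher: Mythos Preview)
Your proposal is correct and follows essentially the same approach as the paper: the paper's proof is a one-line reference to the compactness argument already developed in Proposition~\ref{rel_exist} (which is itself built on the relative entropy estimate of Proposition~\ref{rel_prop0} and Aubin--Lions--Simon) together with the uniqueness of Theorem~\ref{thm_rel_unique} to identify the limit and upgrade subsequential to full convergence. You have simply spelled out in detail the steps that the paper leaves implicit by pointing back to the existence proof.
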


\begin{proof}  The proof is a consequence of the compactness argument used in Proposition~\ref{rel_exist}, and the uniqueness of the limit implied by Theorem~\ref{thm_rel_unique}.\end{proof}

\section{Equivalence of the Rate Functions}\label{sec_lsc_envelope}

 We now use the analysis of the skeleton equation in the previous sections in order to deduce the full LDP Theorem \ref{thrm: main result} from the partial LDP in Theorem \ref{thrm: partial LDP}.  Henceforth we fix a constant density $\gamma\in(0,\infty)$.  In Theorem \ref{thrm: partial LDP}, we proved upper and lower bounds where the rate functions are the particular instances of the following with $\Phi=\varphi$. For the upper bound, (\ref{eq: Ior}) is a special case of setting
 \begin{align}\label{lsc_0111}
 {I}^{\rm up}(\rho) & :=\int_{\mathbb{R}^d}\Psi_{\Phi, \gamma}(\rho_0)+\sup_{H\in C^{1,3}_{\rm c}([0,T]\times \mathbb{R}^d)} \langle H_T,\pi_T\rangle - \langle H_0, \pi_0\rangle - \int_0^T\langle \partial_t H_t, \pi_t\rangle dt 
 \\ \nonumber & \quad - \frac12 \int_0^T \int_{\mathbb{R}^d}\Phi(\rho_t(x))\left(\Delta H_t(x)+|\nabla H_t(x)|^2\right) dt dx
 \end{align}

if $\rho \in \DD_{\rm a.c.}$ and $\nabla \Phi^{1/2}(\rho)\in (L^2_{t,x})^d$, and $I=\infty$ on $\DD\setminus \DD_{\rm a.c.}$. In the lower bound, (\ref{eq: Ilo}) is the particular case of setting
 \begin{equation}\label{lsc_0112}  I^{\textrm{lo}}(\pi):= \overline{(I^0)_{|\mcS}}(\pi)   \end{equation}  where $I^0$ is the rate function defining the $\liminf$ in \eqref{eq: Ilo} and $\overline{(\cdot)_{|\mathcal{S}}}$ denotes the lower semicontinuous envelope of the restriction to a set $\mathcal{S}$ of regular fluctuations recalled in Definition \ref{smooth_fluctuation} below. Finally, in the previous expression,  $I(\pi)$ is the full rate function analagous to (\ref{lsc_01111'}):%
\begin{equation}\label{lsc_01111}
I(\rho):=\int_{\mathbb{R}^d}\Psi_{\Phi,\gamma}(\rho_0(x))dx+\frac{1}{2}\inf\left\{\norm{g}^2_{L^2}\colon \partial_t\rho = \Delta\Phi(\rho)-\nabla\cdot(\Phi^\frac{1}{2}(\rho)g)\right\}
\end{equation}
when $\rho \in \DD_{\rm a.c.}$ has finite entropy dissipation, and $I=\infty$ on $ \DD\setminus \DD_{\rm a.c.}$. In this section, we will show that, under the Assumptions~\ref{ap_assume}, \ref{as_unique}, and \ref{as_equiv} on $\Phi$, all three rate functions coincide. The deduction of Theorem \ref{thrm: main result} from Theorem \ref{thrm: partial LDP} follows, since the hypotheses (A1-A2) of the theorem, and the consequence \eqref{eq: DH hyp on Phi'}, show that these conditions are satisfied when $\Phi=\varphi$ is the nonlinearity (\ref{eq: def varphi}) arising from the zero-range process.

It is a straightforward consequence of \cite[Lemma~36, Proposition~37]{FehGes19} that, for the rate functions appearing in \eqref{lsc_0111}, \eqref{lsc_0112}, and \eqref{lsc_01111},
\begin{align}\label{lsc_11}
& I(\rho) = I^0(\rho) = I^{\textrm{up}}(\rho) \nonumber
 \\&= \frac{1}{2}\sup_{\{\norm{\psi}_{H^1_{\Phi(\rho)}}\leq 1\}}\Big(-\int_0^T\int_{\R^d}\Phi(\rho)\Delta\psi- \int_0^T\int_{\R^d}\rho\partial_t\psi+\int_{\R^d}\psi(x,s)\rho(x,s)\big|_{s=0}^{s=T}\Big)^2,
\end{align}
where the supremum is taken over smooth functions $\psi\in\textrm{C}^\infty_c(\R^d\times[0,T])$, and the space  $H^1_{\Phi(\rho)}$ is defined in Definition~\ref{weighted_H1} below.  It then remains only to prove that $I^{\textrm{\emph{lo}}}=I$, which is the content of Theorem~\ref{thm:envelope} below.

\begin{definition}\label{weighted_H1} Let $\Phi\in\textrm{C}([0,\infty))\cap\textrm{C}^1_{\textrm{loc}}((0,\infty))$ and $\rho\in L^\infty([0,T];L^1_{\textrm{loc}}(\R^d))$ satisfy $\nabla\Phi^\frac{1}{2}(\rho)\in (L^2_{t,x})^d$.   We first introduce the equivalence $\sim$ on $\textrm{C}^\infty(\R^d\times[0,T])$ by
\[\psi\sim\phi\;\;\textrm{if and only if}\;\;\int_0^T\int_{\R^d}\Phi(\rho)\abs{\nabla\psi-\nabla\phi}^2=0.\]
Let $H^1_{\Phi(\rho)}$ be the Hilbert space completion of the set of equivalence classes $\textrm{C}^\infty(\R^d\times[0,T])/\sim$ with respect to the positive definite inner product, for every $\phi,\psi\in \textrm{C}^\infty_c(\R^d\times[0,T])/\sim$,
\[\langle \phi,\psi \rangle_{H^1_{\Phi(\rho)}}=\int_0^T\int_{\R^d}\Phi(\rho)\nabla\phi\cdot \nabla\psi\;\;\textrm{and}\;\;\norm{\phi}_{H^1_{\Phi(\rho)}}=\langle \phi,\phi\rangle_{H^1_{\Phi(\rho)}}^\frac{1}{2}.\]
\end{definition}

\begin{definition}\label{smooth_fluctuation}  Let $\Phi\in\textrm{C}([0,\infty))\cap\textrm{C}^1_{\textrm{loc}}((0,\infty))$ satisfy Assumptions~\ref{ap_assume}, \ref{as_unique}, and \ref{as_equiv}, let $\gamma\in(0,\infty)$, and let $\C^\infty_{c,\gamma}(\R^d)=\{\psi+\gamma\colon\psi\in\C^\infty_c(\R^d)\}$.   Let $\mathcal{S}\subseteq \mathcal{C}$ denote the subset of all $\rho \in \DD_{\rm a.c.}$ such that there exist a strictly positive $\rho_0\in \textrm{C}^\infty_{c,\gamma}(\R^d)$ and an $H\in C^{3,1}_c(\R^d\times(0,T))$ such that
\[\partial_t\rho  = \Delta\Phi(\rho)-\nabla\cdot\left(\Phi(\rho)\nabla H\right)\;\;\textrm{in}\;\;\R^d\times(0,T)\;\;\textrm{with}\;\;\rho(\cdot,0)=\rho_0,\]
in the sense of Definition~\ref{def_sol_re} with control $g=\Phi^\frac{1}{2}(\rho)\nabla H$.
\end{definition}

\begin{theorem}\label{thm:envelope}  Let $\Phi\in\textrm{C}([0,\infty))\cap\textrm{C}^3_{\textrm{loc}}((0,\infty))$ satisfy Assumptions~\ref{ap_assume}, \ref{as_unique}, and \ref{as_equiv}.  Then, the rate functions defined in \eqref{lsc_0112} and \eqref{lsc_01111} satisfy
\[I^{\textrm{\emph{lo}}}=I.\]
That is, $I$ is the l.s.c.\ envelope of $I^0$ restricted to smooth fluctuations $\mcS$ and $I=I^{\textrm{\emph{lo}}}=I^{\textrm{\emph{up}}}$.
\end{theorem}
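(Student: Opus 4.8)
The goal is to show that the lower semicontinuous envelope of the restriction $I^0\big|_{\mathcal{S}}$ coincides with $I$. Since $\mathcal{S}\subseteq\mathcal{C}$ and elements of $\mathcal{S}$ are in particular weak (hence renormalised kinetic) solutions of the skeleton equation with a control of the form $g=\Phi^{1/2}(\rho)\nabla H$, the inequality $I\le I^{\textrm{lo}}$ is essentially immediate: every $\rho^n\in\mathcal{S}$ appearing in the infimum \eqref{eq: Ilo} is an admissible competitor for the variational problem defining $I$, with $\|g^n\|_{L^2}^2=\|\Phi^{1/2}(\rho^n)\nabla H^n\|_{L^2}^2$, and $I$ is itself lower semicontinuous (by \eqref{lsc_11}, which represents $I$ as a supremum of continuous functionals of $\rho$). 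So $I\le\liminf_n I^0(\rho^n)$ along any approximating sequence, and taking the infimum gives $I\le I^{\textrm{lo}}$.

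The substance is the reverse inequality $I^{\textrm{lo}}\le I$: given $\rho\in\DD_{\mathrm{a.c.}}$ with $I(\rho)<\infty$, I must produce a sequence $\rho^n\in\mathcal{S}$ with $\rho^n\to\rho$ in $\DD$ and $\limsup_n I^0(\rho^n)\le I(\rho)$. First I would fix a near-optimal control $g$ for $\rho$, i.e. $\rho$ solves \eqref{rel_eq} with $\tfrac12\|g\|_{L^2}^2\le I(\rho)-\int\Psi_{\Phi,\gamma}(\rho_0)+\epsilon$; by the equivalence of kinetic and weak solutions (Theorem~\ref{thm_equiv}) and uniqueness (Theorem~\ref{thm_rel_unique}), $\rho$ is \emph{the} renormalised kinetic solution. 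The approximation then proceeds in several layers. (i) Approximate $g$ in $(L^2_{t,x})^d$ by controls of the special gradient form $\Phi^{1/2}(\rho)\nabla H$ with $H\in C^{3,1}_c$: this is a density statement in the weighted space $H^1_{\Phi(\rho)}$ of Definition~\ref{weighted_H1}, and is exactly the kind of argument carried out in \cite[Lemma~36, Proposition~37]{FehGes19}; the only new point is to check it survives in infinite volume, using that $\Phi^{1/2}(\rho)-\Phi^{1/2}(\gamma)\in L^{2_*}_{t,x}$ by Proposition~\ref{prop_rel_int} to control tails. (ii) Approximate the initial datum $\rho_0\in\Ent_{\Phi,\gamma}$ by strictly positive data in $C^\infty_{c,\gamma}(\R^d)$, which is possible since such functions are dense in $\Ent_{\Phi,\gamma}$ (used already in Propositions~\ref{prop_rel_int} and \ref{rel_exist}). (iii) For each such pair $(\rho_{0}^n, H^n)$, the corresponding solution $\rho^n$ of $\partial_t\rho^n=\Delta\Phi(\rho^n)-\nabla\cdot(\Phi(\rho^n)\nabla H^n)$ lies in $\mathcal{S}$ by definition, and I must show $\rho^n\to\rho$. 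This is precisely weak–strong continuity: $g^n=\Phi^{1/2}(\rho^n)\nabla H^n$ converges (strongly, even) to $g$ and $\rho_0^n\to\rho_0$ in $\Ent_{\Phi,\gamma}$, so Proposition~\ref{rel_weak_strong} gives $\rho^n\to\rho$ strongly in $L^1([0,T];L^1(B_R))$ for every $R$, hence in $\DD$ after passing through the identification of $\DD_{\mathrm{a.c.}}$ with a subset of $\DD$. (iv) Finally, lower semicontinuity / convergence of the cost: $I^0(\rho^n)=\int\Psi_{\Phi,\gamma}(\rho_0^n)+\tfrac12\|\Phi^{1/2}(\rho^n)\nabla H^n\|_{L^2}^2\to\int\Psi_{\Phi,\gamma}(\rho_0)+\tfrac12\|g\|_{L^2}^2\le I(\rho)+\epsilon$, using convergence of the relative entropies of the initial data and the (strong) $L^2$ convergence of the controls arranged in step (i). Letting $\epsilon\downarrow0$ via a diagonal argument completes the proof.

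One subtlety deserves care: the sets $\mathcal{S}$ and the envelope are taken inside $\DD$ (equivalently $\mathcal{C}$), so I must confirm that the approximating $\rho^n$ genuinely live in $\mathcal{C}$ and converge there — this uses Proposition~\ref{rel_L1-continuity} to upgrade the $L^1_{\mathrm{loc}}$-valued solutions to continuous $\mathcal{M}(\R^d)$-valued paths, together with the observation that $L^1_{\mathrm{loc}}$-convergence plus the uniform-in-$n$ relative-entropy bound forces convergence in the vague topology on $\mathcal{M}$. Another point is that $I^{\textrm{lo}}$ is defined as an envelope of a $\liminf$, so strictly I should produce a sequence realizing the $\liminf$; this is handled by the diagonal argument over the $\epsilon$-approximations in steps (i)–(ii).

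\textbf{Main obstacle.} The hard part is step (i)–(iii) taken together in infinite volume: verifying that controls of gradient form are dense \emph{and} that the associated exact solutions $\rho^n\in\mathcal{S}$ still converge to $\rho$, when none of the objects are globally $L^1$ or $L^2$ integrable and the only uniform control is the relative entropy and the $L^{2_*}$-type bound of Proposition~\ref{prop_rel_int}. The corresponding finite-volume argument in \cite{FehGes19} exploits compactness and global integrability freely; here the tail estimates must be threaded through every approximation step, and the weak–strong continuity Proposition~\ref{rel_weak_strong} must be applicable to the specific gradient-form controls $\Phi^{1/2}(\rho^n)\nabla H^n$, which requires knowing these converge in $(L^2_{t,x})^d$ — itself a consequence of the strong compactness built into Propositions~\ref{rel_exist} and \ref{rel_weak_strong} but needing the nonlinearity hypotheses (Assumptions~\ref{ap_assume}, \ref{as_unique}, \ref{as_equiv}) in an essential way. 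I expect the cleanest route is to follow the skeleton of \cite[Section~9 / proof of the main envelope theorem]{FehGes19} and flag exactly the three places where the infinite-volume tail estimate replaces a compactness argument.
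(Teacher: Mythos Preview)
Your plan has a genuine circularity in step~(iii), and it is precisely the place where the paper's argument does something you have not anticipated.

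You propose to fix $H^n\in C^{3,1}_c$ with $\Phi^{1/2}(\rho)\nabla H^n\to g$ in $L^2$, solve the Fokker--Planck equation with control $\nabla H^n$ and smoothed initial data to obtain $\rho^n\in\mathcal{S}$, and then invoke Proposition~\ref{rel_weak_strong} to conclude $\rho^n\to\rho$. But the control that governs $\rho^n$ as a skeleton solution is $g^n=\Phi^{1/2}(\rho^n)\nabla H^n$, \emph{not} $\Phi^{1/2}(\rho)\nabla H^n$. To apply the weak--strong continuity you would need $g^n\rightharpoonup g$, which requires at minimum a uniform $L^2$ bound on $g^n$, i.e.\ a bound on $\int\Phi(\rho^n)|\nabla H^n|^2$. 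Since constants are not super- or sub-solutions of the Fokker--Planck equation (the term $\Phi(M)\Delta H^n$ does not vanish), you have no comparison principle to keep $\rho^n$ bounded, and without that bound neither the $L^2$-estimate on $g^n$ nor the relative entropy estimate closes. Your step~(iv) has the same problem: you assert $\|g^n\|_{L^2}\to\|g\|_{L^2}$, but even granting $\rho^n\to\rho$ this would require strong $L^2$ convergence of the $g^n$, which you have not arranged.

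The paper breaks this loop by reversing the order of operations. It first smooths $g$ to $g_n$ and introduces a cutoff $\psi_n(\cdot)$ vanishing near $0$ and $\infty$, then solves the skeleton equation with the \emph{solution-independent} control $\psi_n(\rho_{n,R})g_n$ (more precisely, solution-dependent only through a bounded factor). The cutoff $\psi_n$ forces $\nicefrac{1}{2n}\leq\rho_{n,R}\leq 2n$ by comparison, which in turn makes the equation uniformly elliptic and yields $\rho_{n,R}\in C^{3,2}$ by Schauder. Only \emph{after} this does the paper invoke Riesz representation in $H^1_{\Phi(\rho_{n,R})}$ to produce $H_{n,R}$, and elliptic regularity then gives $H_{n,R}\in C^{3,1}$. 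The point is that the Riesz step automatically yields the minimality inequality
\[
\int\Phi(\rho_{n,R})|\nabla H_{n,R}|^2\leq \|\psi_n(\rho_{n,R})g_n\|_{L^2}^2\leq \|g_n\|_{L^2}^2,
\]
which is exactly the a priori bound your approach lacks and which gives $\limsup I(\rho_{n,R})\leq I(\rho)$ directly. Convergence $\rho_{n,R}\to\rho$ then uses the \emph{first} representation of the control (for which $\psi_n(\rho_{n,R})g_n\to g$ is clear), and membership in $\mathcal{S}$ uses the second. A final spatial cutoff $\varphi_{\tilde R}$ on $H_{n,R}$ is needed to get compact support; this is the additional infinite-volume step you correctly anticipated, but it comes last rather than being folded into your step~(i).
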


\begin{proof} It was shown in \eqref{lsc_11} that $I=I^0=I^{\textrm{up}}$ on $\mcS$.  By definition $I^{\textrm{lo}}$ is then the l.s.c.\ envelope of $I$ restricted to $\mcS$, and it suffices to show that $I$ equals the l.s.c.\ envelope of its restriction to $\mcS$.

Since $I$ is l.s.c.\ we have $I\leq I^{\textrm{lo}}$.  It remains to show that, for any $\rho\in \mathcal{C}$ satisfying $I(\rho)<\infty$, there exists a sequence $\rho_n\in\mathcal{S}$ such that, as $n\rightarrow \infty$,
\[\rho_n\rightarrow\rho\;\;\textrm{strongly in $L^1_{t,x}$}\;\;\textrm{and}\;\;I(\rho_n)\rightarrow I(\rho).\]
The existence of a unique minimiser $g\in (L^2_{t,x})^d$ of the rate function is a consequence of the convexity of the set of controls defining the rate function and the weak lower semicontinuity of the Sobolev norm.  We let $g\in (L^2_{t,x})^d$ be the unique
 control satisfying
\[\partial_t\rho = \Delta \Phi(\rho)-\nabla\cdot(\Phi^\frac{1}{2}(\rho)g)\;\;\textrm{with}\;\;I(\rho)=\frac{1}{2}\norm{g}^2_{L^2}.\]

The construction of the smooth approximation occurs in four steps.  First, we introduce a smoothing of the control $g$ and then a smoothing and perturbation of the initial data $\rho(\cdot,0)\in\Ent_{\Phi,\gamma}(\R^d)$ so that it becomes a strictly positive and compactly supported perturbation of the constant density $\gamma$.  We then introduce a cutoff that ``turns off'' the control in regions where the solution approaches zero or infinity.  This guarantees that the approximate solutions remain strictly bounded away from zero and infinity---and therefore that solutions remain in a region where $\Phi$ is uniformly elliptic---and allows for the application of standard parabolic and elliptic regularity estimates to establish the regularity of the control in Definition~\ref{smooth_fluctuation}.  Finally, we introduce a spatial cutoff in order to obtain a compactly supported control, in accordance with Definition~\ref{smooth_fluctuation}.\

\textit{Step 1:  The smooth approximation.}  Extend $g$ to $\R^d\times\R$ by taking $g=0$ on the complement of $\R^d\times[0,T]$, for every $\ve\in(0,1)$ let $\kappa_{d+1}^\ve$ be a standard convolution kernel on $\R^{d+1}$ of scale $\ve$, and for every $n\in\N$ let $g_n$ be defined by $g_n = (g*\kappa_{d+1}^{\nicefrac{1}{n}})$.  For every $\ve\in(0,1)$ let $\kappa^\ve_d$ be a smooth convolution kernel on $\R^d$, let $\varphi$ be a smooth cutoff of $\overline{B}_1$ in $B_2$ and for every $R\in(0,\infty)$ let $\varphi_R(x)=\varphi(\nicefrac{x}{R})$, and for every $n\in\N$ large enough so that $\nicefrac{1}{n}<\gamma$ let $\rho_{0,n,R}$ be the smooth, bounded, and strictly positive initial data defined by
\[\rho_{0,n} = \big(((\rho(\cdot,0))\vee\nicefrac{1}{n})\wedge n)*\kappa^{\nicefrac{1}{n}}\big)\varphi_R+(1-\varphi_R)\gamma.\]
For every $n\in\N$ let $\psi_n\colon[0,\infty)\rightarrow[0,1]$ be a smooth function satisfying
\[\psi_n = 0\;\;\textrm{on}\;\;[0,\nicefrac{1}{2n}]\cup[2n,\infty)\;\;\textrm{and}\;\;\psi_n = 1\;\;\textrm{on}\;\;[\nicefrac{1}{n},n].\]
A small adaptation of the proofs of Theorem~\ref{thm_rel_unique}, Proposition~\ref{rel_L1-continuity}, and Theorem~\ref{thm_equiv} proves that, for every $n\in\N$, there exists a unique solution $\rho_{n,R}\in \mathcal{C}$ of the equation
\[\partial_t\rho_{n,R} = \Delta\Phi(\rho_{n,R})-\nabla\cdot(\Phi^\frac{1}{2}(\rho_{n,R})\psi_n(\rho_{n,R})g_{n,R})\;\;\textrm{in}\;\;\R^d\times(0,T)\;\;\textrm{with}\;\;\rho_{n,R}(\cdot,0)=\rho_{0,n,R},\]
in the sense of Definition~\ref{def_sol_re} with control $\psi_n(\rho_{n,R})g_n$.  We have using the comparison principle, the definition of $\psi_n$, the choice $\nicefrac{1}{n}<\gamma$, and the definition of $\rho_{0,n,R}$ that
\[\nicefrac{1}{2n}\leq \rho_{n,R}\leq 2n\;\;\textrm{on}\;\;\R^d\times[0,T].\]
Since $\psi_n$, $g_n$, and $\rho_{0,n,R}$ are smooth and bounded, $\Phi\in\textrm{C}^3_{\textrm{loc}}((0,\infty))$ with $\Phi'$ strictly positive on $(0,\infty)$, and since $\rho_{n,R}$ is bounded and bounded away from zero, interior Schauder estimates (see, for example, Lady\u{z}henskaya, Solonnikov, and Ural'ceva \cite{Lad1967}) prove that $\rho_{n,R}\in\textrm{C}^{3,2}(\R^d\times[0,T])$.  We view $\rho_{n,R}$ as satisfying
\[\partial_t\rho_{n,R}  = \Delta\Phi(\rho_{n,R}n)-\nabla\cdot(\Phi^\frac{1}{2}(\rho_{n,R})\tilde{g}_n)\;\;\textrm{in}\;\;\R^d\times(0,T)\;\;\textrm{with}\;\;\rho_{n,R}(\cdot,0)=\rho_{0,n,R},\]
for the control $\tilde{g}_n = \psi_n(\rho_{n,R})g_n$.  The positivity and boundedness of $\rho_{n,R}$ and the positivity of $\Phi$ on $(0,\infty)$ prove that elements of $H^1_{\Phi(\rho_{n,R})}$ can be identified with a unique element of $L^2_tH^1_x$ with zero spatial mean on almost every time slice.  A repetition of the argument in \cite[Proposition~37]{FehGes19}, which amounts essentially to the Riesz representation theorem in the space $H^1_{\Phi(\rho_{n,R})}$, then shows that for every $n\in\N$ sufficiently large and $R\in(0,\infty)$ there exists $H_{n,R}\in L^2_tH^1_x$ satisfying
\begin{equation}\label{lsc_299}\int_0^T\int_{\R^d}\Phi(\rho_{n,R})\abs{\nabla H_{n,R}}^2\leq \int_0^T\int_{\R^d}\abs{\tilde{g}_n}^2\leq \int_0^T\int_{\R^d}\abs{g_n}^2,\end{equation}
such that
\[\partial_t\rho_{n,R}  = \Delta\Phi(\rho_{n,R})-\nabla\cdot(\Phi(\rho_{n,R})\nabla H_{n,R})\;\;\textrm{in}\;\;\R^d\times(0,T)\;\;\textrm{with}\;\;\rho_{n,R}(\cdot,0)=\rho_{0,n,R}.\]
Therefore, for every $n\in\N$ sufficiently large and $R\in(0,\infty)$, $H_{n,R}$ is a solution to the uniformly elliptic equation
\[-\nabla\cdot (\Phi(\rho_{n,R})\nabla H_{n,R}) = \partial_t\rho_{n,R}-\Delta\Phi(\rho_{n,R})\;\;\textrm{in}\;\;\R^d\times(0,T),\]
from which $\nicefrac{1}{2n}\leq \rho_{n,R}\leq 2n$, the local $\textrm{C}^3$-regularity and positivity of $\Phi$ on $(0,\infty)$, the $\textrm{C}^{3,2}$-regularity of $\rho_{n,R}$, and interior elliptic regularity estimates (see, for example, \cite{Lad1967}) prove that $H_{n,R}\in \textrm{C}^{3,1}(\R^d\times(0,T))$.  At this stage, we have constructed a control $H_{n,R}$ satisfying the smoothness criterion of Definition~\ref{smooth_fluctuation}.  However, the $H_{n,R}$ are not necessarily compactly supported in space.  It is for this reason that, in the final step, for every $\tilde{R}\in[1,\infty)$, we let $\rho_{n,R,\tilde{R}}$ be the solution

\[\partial_t\rho_{n,R,\tilde{R}}  = \Delta\Phi(\rho_{n,R,\tilde{R}})-\nabla\cdot(\Phi(\rho_{n,R,\tilde{R}})\nabla (H_{n,R}\varphi_{\tilde{R}}))\;\;\textrm{in}\;\;\R^d\times(0,T)\;\;\textrm{with}\;\;\rho_{n,R,\tilde{R}}(\cdot,0)=\rho_{0,n,R}.\]
Since $H_{n,R}\in\C^{3,1}(\R^d\times(0,T))$ we have using the smoothness of $\varphi$ that $H_{n,R}\varphi_{\tilde{R}}\in\C^{3,1}(\R^d\times(0,T))$ uniformly in $\tilde{R}\in[1,\infty)$.  This completes the proof that $\rho_{n,R,\tilde{R}}\in\mathcal{S}$ for every $n\in\N$ sufficiently large, $R\in(0,\infty)$, and $\tilde{R}\in[1,\infty)$.

\textit{Step 2: The strong convergence of the smooth approximations.}  We will first observe that, due to the fact that the $\nabla(H_{n,R}\varphi_{\tilde{R}})$ are uniformly bounded in $(L^\infty_{t,x})^d\cap (L^2_{t,x})^d$, and since $\rho_{0,n}$ is uniformly bounded away from zero and infinity, the comparison principle proves that the $\rho_{n,R,\tilde{R}}$ are uniformly bounded away from zero and infinity in $\tilde{R}$, for every $n$ and $R$.  Furthermore, the $\rho_{n,R,\tilde{R}}$ satisfy a uniform relative entropy estimate in $\tilde{R}$, for every $n$ and $R$.  More precisely, following Proposition~\ref{rel_prop0}, we have that, for some $c\in(0,\infty)$ independent of $n$, $R$, and $\tilde{R}$,
\begin{align*}
& \sup_{t\in[0,T]}\int_{\R^d}\Psi_{\Phi,\gamma}(\rho_{n,R,\tilde{R}})+\int_0^T\int_{\R^d}\abs{\nabla\Phi^\frac{1}{2}(\rho_{n,R,\tilde{R}})}^2
\\ & \leq c\Big(\int_{\R^d}\Psi_{\Phi,\gamma}(\rho_{0,n,R})+\int_0^T\int_{\R^d}\Phi(\rho_{n,R,\tilde{R}})\abs{\nabla(H_{n,R}\varphi_{\tilde{R}})}^2\Big).
\end{align*}
For the final term on the righthand side,
\begin{align}\label{lsc_0001}
\int_0^T\int_{\R^d}\Phi(\rho_{n,R,\tilde{R}})\abs{\nabla(H_{n,R}\varphi_{\tilde{R}})}^2 & \leq 2\int_0^T\int_{\R^d}\Phi(\rho_{n,R,\tilde{R}})\abs{\nabla H_{n,R}}^2\varphi_{\tilde{R}}^2
\\ \nonumber & \quad + 2\int_0^T\int_{\R^d}\Phi(\rho_{n,R,\tilde{R}})\abs{\nabla\varphi_{\tilde{R}}}^2 H_{n,R}^2.
\end{align}
It follows from the dominated convergence theorem and \eqref{lsc_299} that the first term on the righthand side of \eqref{lsc_0001} converges, as $\tilde{R}\rightarrow\infty$, to
\[\int_0^T\int_{\R^d}\Phi(\rho_{n,R,\tilde{R}})\abs{\nabla H_{n,R}}^2.\]
For the second term on the righthand side of \eqref{lsc_0001}, we have using the uniform boundedness of $\rho_{n,R,\tilde{R}}$ in $\tilde{R}$, for every $n$ and $R$, that, if $d=1$ or $d=2$,
\[\int_0^T\int_{\R^d}\Phi(\rho_{n,R,\tilde{R}})\abs{\nabla\varphi_{\tilde{R}}}^2 H_{n,R}^2\leq cR^{-2}\int_0^T\int_{\textrm{Supp}(\nabla\varphi_{\tilde{R}})} H_{n,R}^2,\]
which, since $H_{n,R}\in\C^{3,1}(\R^d\times(0,T))$, converges to zero if $d=1$ and remains uniformly bounded if $d=2$.  If $d\geq 3$, for $\nicefrac{1}{2_*} = \nicefrac{1}{2}-\nicefrac{1}{d}$,
\[\int_0^T\int_{\R^d}\Phi(\rho_{n,R,\tilde{R}})\abs{\nabla\varphi_{\tilde{R}}}^2 H_{n,R}^2\leq c\int_0^T\Big(\int_{\textrm{Supp}(\nabla\varphi_{\tilde{R}})} H_{n,R}^{2_*}\Big)^\frac{2}{2_*},\]
which, by the dominated convergence theorem, vanishes in the limit $\tilde{R}\rightarrow\infty$ using the standard $H^1$ Sobolev inequality and $\C^{3,1}(\R^d\times(0,T))$ regularity of $H_{n,R}$.  Similarly, the fact that the analogous relative entropy estimate for the $\rho_{n,R}$ holds uniformly in $n$ and $R$ is immediate from Proposition~\ref{rel_prop0} due to the fact that the $H_{n,R}$ are uniformly bounded in $H^1_{\Phi(\rho_{n,R})}$, that uniformly in $n$ and $R$ we have estimate \eqref{lsc_299}, and that, using the convexity of $\Psi_{\Phi,\gamma}$,

\[\sup_{n\in\N, R\in[1,\infty)}\int_{\R^d}\Psi_{\Phi,\gamma}(\rho_{0,n,R})\leq \int_{\R^d}\Psi_{\Phi,\gamma}(\rho_0).\]
The above estimates and a repetition of the methods of Proposition~\ref{rel_exist} prove that, along a subsequence $\tilde{R}\rightarrow\infty$,
\[\lim_{\tilde{R}\rightarrow\infty}\rho_{n,R,\tilde{R}}=\rho_{n,R}\;\;\textrm{in}\;\;L^1([0,T];L^1_{\textrm{loc}}(\R^d)),\]
which, since the limit is unique, proves the convergence along the full sequence.  And, similarly, along the full sequence $n,R\rightarrow\infty$,
\[\lim_{n,R\rightarrow\infty}\rho_{n,R} = \rho\;\;\textrm{in}\;\;L^1([0,T];L^1_{\textrm{loc}}(\R^d)).\]
It remains to prove the convergence in $\mathcal{C}$.  Let $\psi\in \C^\infty_c(\R^d)$ and for every $\d\in(0,1)$ let $\kappa^\d$ be a standard $d$-dimensional convolution kernel on $\R^d$ of scale $\d$.  It follows from the local $L^1$-boundedness of the $\rho_{n,R,\tilde{R}}$ implied by the relative entropy estimate that the paths
\begin{equation}\label{lsc_300}t\rightarrow \int_{\R^d}\rho_{n,R,\tilde{R}}(x,t)\psi(x)\;\;\textrm{are uniformly bounded in $n$, $R$, and $\tilde{R}$.}\end{equation}
Furthermore, following the methods of Proposition~\ref{prop_rel_int} locally in space, for $K\in(0,\infty)$ satisfying $\Supp(\psi)\subseteq B_K$, it follows from the equation that for every $s,t\in[0,T]$, for $\nicefrac{1}{2_*}=\nicefrac{1}{2}-\nicefrac{1}{d}$ if $d\geq 3$ and $2_*=6$ if $d=1$ or $d=2$, for $c\in(0,\infty)$ depending on $\psi$ but independent of $n$, $R$, and $\tilde{R}$,
\begin{align*}
& \abs{\int_{\R^d}\rho_{n,R,\tilde{R}}(x,t)\psi(x)-\int_{\R^d}\rho_{n,R,\tilde{R}}(x,s)\psi(x)}
\\ & \leq c\abs{s-t}^{1-\frac{2}{2_*}}\norm{\Phi^\frac{1}{2}(\rho_{n,R,\tilde{R}})}_{L^\frac{2_*}{2}_tL^2(B_K)}\big(\norm{\nabla\Phi^\frac{1}{2}(\rho_{n,R,\tilde{R}})}_{L^2_{t,x}}+\norm{\nabla(H_{n,R}\varphi_{\tilde{R}})}_{L^2_{t,x}}\big).
\end{align*}
By the Arzel\`a--Ascoli theorem, this implies that the paths \eqref{lsc_300} are tight and converge as $\tilde{R}\rightarrow\infty$ in $\C([0,T])$ to the path
\begin{equation}\label{lsc_302}t\rightarrow \int_{\R^d}\rho_{n,R}(x,t)\psi(x),\end{equation}
where the limit is uniquely identified by the strong Fr\'echet $L^1_{\textrm{loc}}(\R^d)$-convergence above.  Since the limiting paths \eqref{lsc_302} are also uniformly bounded and satisfy using \eqref{lsc_299} that
\begin{align*}
& \abs{\int_{\R^d}\rho_{n,R,\tilde{R}}(x,t)\psi(x)-\int_{\R^d}\rho_{n,R}(x,s)\psi(x,s)}
\\ & \leq c\abs{s-t}^{1-\frac{2}{2_*}}\norm{\Phi^\frac{1}{2}(\rho_{n,R})}_{L^\frac{2_*}{2}_tL^2(B_K)}\big(\norm{\nabla\Phi^\frac{1}{2}(\rho_{n,R})}_{L^2_{t,x}}+\norm{g}_{L^2_{t,x}}\big),
\end{align*}
we again have using the Arzel\`a--Ascoli theorem that these paths are relatively compact $\C([0,T])$, for which the strong Fr\'echet $L^1_{\textrm{loc}}(\R^d)$-convergence proves the uniqueness of the limit and therefore the convergence of the paths \eqref{lsc_302} as $n,R\rightarrow\infty$ in $\C([0,T])$ to the path
\[t\rightarrow \int_{\R^d}\rho(x,t)\psi(x).\]
The density of compactly supported smooth functions in $\C_c(\R^d)$ and the triangle inequality then complete the proof that
\[\lim_{n,R\rightarrow\infty}\Big(\lim_{\tilde{R}\rightarrow\infty}\rho_{n,R,\tilde{R}}\Big)=\rho\;\;\textrm{in}\;\;\mathcal{C}.\]

\textit{Step 3: Convergence of the rate functions.}  It remains only to show that, along a subsequence $n,R,\tilde{R}\rightarrow\infty$, we have that
\[I(\rho_{n,R,\tilde{R}})\rightarrow I(\rho).\]
We first pass to the limit $\tilde{R}\rightarrow\infty$, for which it is a consequence of estimate \eqref{lsc_299}, the analogous interpolation estimates of Proposition~\ref{prop_rel_int}, and the strong Fr\'echet $L^1_{\textrm{loc}}(\R^d)$-convergence of $\rho_{n,R,\tilde{R}}$ to $\rho_{n,R}$ that
\[\lim_{\tilde{R}\rightarrow\infty}\frac{1}{2}\int_0^T\int_{\R^d}\Phi(\rho_{n,R,\tilde{R}})\abs{\nabla(H_{n,R}\varphi_{\tilde{R}})}^2 = \frac{1}{2}\int_0^T\int_{\R^d}\Phi(\rho_{n,R})\abs{\nabla H_{n,R}}^2.\]
It remains to pass to the limit $n,R\rightarrow\infty$.  For this, we know that
\begin{equation}\label{lsc_45}I(\rho_{n,R})=\frac{1}{2}\int_0^T\int_{\R^d}\Phi(\rho_n)\abs{\nabla H_{n,R}}^2 \leq \frac{1}{2}\int_0^T\int_{\R^d}\abs{\psi_n(\rho_{n,R})g_n}^2\leq\frac{1}{2}\norm{g}^2_{L^2_{t,x}}.\end{equation}
After passing to a subsequence $n,R\rightarrow\infty$, for some $\tilde{g}\in (L^2_{t,x})^d$,
\[\Phi^\frac{1}{2}(\rho_{n,R})\nabla H_{n,R}\rightharpoonup \tilde{g}\;\;\textrm{weakly in}\;\;(L^2_{t,x})^d,\]
from which it follows from \eqref{lsc_45} and the weak lower semicontinuity of the Sobolev norm that $\norm{\tilde{g}}_{L^2}\leq\norm{g}_{L^2}$.  Since it follows from Theorem~\ref{thm_rel_unique} and Theorem~\ref{thm_equiv} that $\rho$ solves
\[\partial_t\rho = \Delta \Phi(\rho)-\nabla\cdot(\Phi^\frac{1}{2}(\rho)\tilde{g})\;\;\textrm{in}\;\;\R^d\times(0,\infty)\;\;\textrm{with}\;\;\rho(\cdot,0)=\rho_0,\]
we have by definition of the rate function that $\norm{\tilde{g}}_{L^2}\geq \norm{g}_{L^2}$ and therefore that $\norm{\tilde{g}}_{L^2}= \norm{g}_{L^2}$.  Due to the fact that the minimizer is unique, this proves that $\tilde{g}=g$ and that, along a subsequence,
\[\frac{1}{2}\norm{g}_{L^2}=\frac{1}{2}\norm{\tilde{g}}_{L^2}\leq\liminf_{n,R\rightarrow\infty}I(\rho_{n,R})\leq\limsup_{n,R\rightarrow\infty}I(\rho_{n,R})\leq\frac{1}{2}\limsup_{n\rightarrow\infty}\norm{g_n}_{L^2}= \norm{g}_{L^2},\]
and therefore, along a subsequence,
\[\lim_{n,R\rightarrow\infty}I(\rho_{n,R})= \frac{1}{2}\norm{g}^2_{L^2}=I(\rho),\]
which completes the proof that $I=I^{\textrm{lo}}$, which completes the proof.\end{proof}

\noindent{\bf  Acknowledgements}.  The first author acknowledges support from the National Science Foundation DMS-Probability
Standard Grant 2348650, the Simons Foundation Travel Grant MPS-TSM-00007753, and the
Louisiana Board of Regents RCS Grant 20130014386. The second author acknowledges support by
the Max Planck Society through the Research Group “Stochastic Analysis in the Sciences.” This
work was funded by the European Union (ERC, FluCo, grant agreement No. 101088488). Views
and opinions expressed are however those of the author(s) only and do not necessarily reflect those
of the European Union or of the European Research Council. Neither the European Union nor the
granting authority can be held responsible for them. The third author is  supported by the Royal Commission for the Exhibition of 1851.

\bibliography{references}

\begin{thebibliography}{BKL95b}

\bibitem[Aub63]{Aubin}
J.-P. Aubin.
\newblock Un th\'eor\`eme de compacit\'e.
\newblock {\em C. R. Acad. Sci. Paris}, 256:5042--5044, 1963.

\bibitem[Ben96]{benois1996large}
O~Benois.
\newblock Large deviations for the occupation times of independent particle systems.
\newblock {\em The Annals of Applied Probability}, 6(1):269--296, 1996.

\bibitem[BKL95a]{BKL95}
O.~Benois, C.~Kipnis, and C.~Landim.
\newblock Large deviations from the hydrodynamical limit of mean zero asymmetric zero range processes.
\newblock {\em Stochastic Processes and their Applications}, 55(1):65--89, January 1995.

\bibitem[BKL95b]{benois1995large}
O.~Benois, C.~Kipnis, and C.~Landim.
\newblock Large deviations from the hydrodynamical limit of mean zero asymmetric zero range processes.
\newblock {\em Stochastic processes and their applications}, 55(1):65--89, 1995.

\bibitem[CP03]{CP03}
G.-Q. Chen and B.~Perthame.
\newblock Well-posedness for non-isotropic degenerate parabolic-hyperbolic equations.
\newblock {\em Annales de l'Institut Henri Poincar{\'e}. Analyse Non Lin{\'e}aire}, 20(4):645--668, 2003.

\bibitem[FG23]{FehGes19}
B.~Fehrman and B.~Gess.
\newblock Non-equilibrium large deviations and parabolic-hyperbolic {PDE} with irregular drift.
\newblock {\em Invent. Math.}, 234(2):573--636, 2023.

\bibitem[FG24]{FG21}
B.~Fehrman and B.~Gess.
\newblock Well-posedness of the {D}ean--{K}awasaki and the nonlinear {D}awson--{W}atanabe equation with correlated noise.
\newblock {\em Arch. Rational. Mech. Anal.}, 248(20), 2024.

\bibitem[GH23]{gess2025porousmediumequationlarge}
Benjamin Gess and Daniel Heydecker.
\newblock The porous medium equation: Large deviations and gradient flow with degenerate and unbounded diffusion, 2023.

\bibitem[Hey23]{heydecker2023large}
D.~Heydecker.
\newblock Large deviations of kac's conservative particle system and energy nonconserving solutions to the boltzmann equation: A counterexample to the predicted rate function.
\newblock {\em The Annals of Applied Probability}, 33(3):1758--1826, 2023.

\bibitem[KL99]{KL99}
C.~Kipnis and C.~Landim.
\newblock {\em Scaling Limits of Interacting Particle Systems}.
\newblock {Springer}, {New York}, 1999.

\bibitem[KL13]{kipnis2013scaling}
C.~Kipnis and C.~Landim.
\newblock {\em Scaling limits of interacting particle systems}, volume 320.
\newblock Springer Science \& Business Media, 2013.

\bibitem[KOV89]{KOV89}
C.~Kipnis, S.~Olla, and S.~R.~S. Varadhan.
\newblock Hydrodynamics and large deviation for simple exclusion processes.
\newblock {\em Comm. Pure Appl. Math.}, 42(2):115--137, 1989.

\bibitem[Lio69]{pLions}
J.-L. Lions.
\newblock {\em Quelques m\'ethodes de r\'esolution des probl\`emes aux limites non lin\'eaires}.
\newblock Dunod; Gauthier-Villars, Paris, 1969.

\bibitem[LSU67]{Lad1967}
O.~A. Lady\u{z}henskaya, V.~A. Solonnikov, and N.~N. Ural'ceva.
\newblock {\em Linear and Quasilinear Equations of Parabolic Type}, volume~23 of {\em Transl. Math. Monographs}.
\newblock Amer. Math. Soc., 1967.

\bibitem[QRV99]{Qu.Re.Va1999}
J.~Quastel, F.~Rezakhanlou, and S.~R.~S. Varadhan.
\newblock Large deviations for the symmetric simple exclusion process in dimensions \$d\textbackslash geq 3\$.
\newblock {\em Probability Theory and Related Fields}, 113(1):1--84, 1999.

\bibitem[Sim87]{Simon}
J.~Simon.
\newblock Compact sets in the space {$L^p(0,T;B)$}.
\newblock {\em Ann. Mat. Pura Appl. (4)}, 146:65--96, 1987.

\end{thebibliography}
\bibliographystyle{alpha}

\end{document}